\theoremstyle{plain}
\newtheorem{theorem}{Theorem}[section]
\newtheorem{lemma}[theorem]{Lemma}
\newtheorem{proposition}[theorem]{Proposition}
\newtheorem{corollary}[theorem]{Corollary}
\theoremstyle{definition}
\newtheorem{definition}[theorem]{Definition}
\theoremstyle{remark}
\newtheorem*{remark}{Remark}
\numberwithin{equation}{section}
\newcommand{\M}{\mathbb M_{k(n)}(E)}
\newcommand{\MMM}{\mathbb M_{k(n)}(\mathfrak A^{**})}
\newcommand{\MM}{\mathbb M_{k(n)}(\mathfrak A)}
\begin{document}

\title{Finite approximation properties of
$C^{*}$-modules}

\author{Massoud Amini}

\address{Faculty of Mathematical Sciences, Tarbiat Modares University\\ Tehran 14115-134, Iran\\ School of Mathematics,
	Institute for Research in Fundamental Sciences (IPM)\\Tehran 19395-5746, Iran\\ mamini@modares.ac.ir,  mamini@ipm.ir}

\address{current address: STEM Complex, 150 Louis-Pasteur Pvt,
	Ottawa, ON, Canada K1N 6N5}

\thanks{The author was partly supported by a grant from IPM (No. 94430215).}

\keywords{finite approximation, $C^*$-correspondence, $C^*$-module, nuclear, exact, semidiscrete, module tensor product}
\subjclass{47A58, 46L08, 46L06}

\begin{abstract}
We study the notions of nuclearity and exactness  for
module maps on $C^{*}$-algebras which are $C^*$-module over another $C^*$-algebra
with compatible actions and examine finite approximation properties of such $C^*$-modules. We prove module versions of the results of Kirchberg and Choi-Effros. As a concrete example we extend the finite dimensional approximation properties of reduced $C^*$-algebras and  von Neumann algebras on countable discrete groups to  these operator algebras on countable inverse semigroups with the module structure coming from the action of the $C^*$-algebras on the subsemigroup of idempotents.
\end{abstract}

\maketitle

\section{introduction}\label{s1}

Finite dimensional approximation properties of $C^*$-algebras is core subject in modern theory of operator algebras \cite{bo}. These  include important notions such as nuclearity, exactness and weak expectation property (WEP). The results in this direction are obtained based on the classical extension and dilation results due to Arveson, Wittstock and Stinespring. Some of these results are also valid for $C^*$-module maps \cite{w2}. It is desirable then to consider the finite dimensional approximation properties of $C^*$-modules. The motivation is two fold: A finite dimensional approximation scheme for $C^*$-morphisms is as follows:

\begin{center}
$\xymatrix @!0 @C=4pc @R=3pc { A \ar[rr]^{\theta} \ar[rd]^{\varphi_n} && B  \\ & \mathbb M_{k_n}(\mathbb C) \ar[ur]^{\psi_n}}$
\end{center}

\noindent where $A$ and $B$ are $C^*$-algebras and $\varphi_n$ and $\psi_n$ are contractive completely positive (c.c.p.) maps. The case $A=B$ and $\theta=\mathrm{id}_A$ is of special interest.

There are situations that such an approximate decomposition is needed through $\mathbb M_{k_n}(N)$ for a $C^*$-algebra or von Neumann algebra $N$. One instance is the notion of a correspondence $H={}_MH_N$ between von Neumann algebras $M$ and $N$. It is shown that $H$ is left amenable iff there is a net of c.p. maps $\theta_n: M\to M$ converging point-ultraweakly to the identity on $M$ such that each $\theta_n$ is a finite sum of compositions of $\psi_n: M\to \mathbb M_{k_n}(N)$ and $\varphi_n: \mathbb M_{k_n}(N)\to M$ of certain specific form \cite[Theorem 2.2]{del}. In this situation $N$ is usually a subalgebra of $M$, but the approximate decomposition is meaningful if $M$ is an $N$-module and the maps are module maps. It is worthwhile to study the possibility of such approximate decompositions in general. Since $\mathbb M_{k_n}(N)$ is a finitely generated module over $N$, we may call the sequence of such  decompositions a ``finite approximation'' in the category of $N$-modules, where ``finite'' refers to ``finitely generated''. The reader should be warned that here we do not deal with a ``finite dimensional approximation'', even in the module sense, as for instance $\mathbb M_{k_n}(N)$ does not necessarily have finite uniform (Goldie) dimension as an $N$-module in general (see for instance, \cite[Chapter 6]{lam}). 

On the other hand, there are concrete examples where it is not know when a decomposition through $M_{k_n}(\mathbb C)$ exists. For instance, as far as I know, it is not known when the reduced $C^*$-algebra $C^*_r(S)$ of an inverse semigroup $S$ is nuclear (i.e., when the identity map on $C^*_r(S)$ approximately decomposes in point-norm through matrix algebras.) This is of course equivalent to the amenability of the corresponding universal groupoid, but equivalent conditions on $S$ is only known in special cases (for instance, if $S$ is $E$-unitary, it is known to be equivalent to the left amenability of $S$.) It is desirable to take some natural module structures on a suitable $C^*$-algebra $\mathfrak A$ into account and find approximate decompositions through $M_{k_n}(\mathfrak A)$. Here we show that under quite natural actions of the subsemigroup $E_S$ of idempotents of $S$, $C^*_r(S)$ is a $C^*_r(E_S)$-module and a point-norm approximate decomposition of the identity map on $C^*_r(S)$ through algebras $M_{k_n}(C^*_r(E_S))$ is possible iff $S$ is left amenable.

This paper  studies   finite dimensional approximation properties of  $C^*$-modules in more details. The paper is organized as follows: In section 2 we introduce the notion of nuclear module morphisms for
a $C^{*}$-algebra $A$ which is $C^*$-module over another $C^*$-algebra $\mathfrak A$
with compatible actions, and use it to define nuclearity and exactness in the category of $C^*$-modules. In section 3, we study tensor products in the category of $C^*$-modules and extend Takesaki theorem (Theorem \ref{tak}) and "the trick" \cite[section 3.6]{bo}.  We also extends classical results of Kirchberg (Theorem \ref{kirchberg}) and Choi-Effros, Kirchberg (Theorem \ref{kirchberg2}) in the module setting.

For the rest of this paper, we fix a $C^*$-algebra $\mathfrak A$ and let $A$ be a $C^*$-algebra and a right Banach
$\mathfrak A$-module (that is, a module with contractive right action) with compatibility conditions,
\begin{equation*}
 (ab)\cdot\alpha=a(b\cdot\alpha),\,\,   a\cdot\alpha\beta=(a\cdot\alpha)\cdot\beta,
\end{equation*}
for each $a,b\in A $ and $\alpha, \beta\in \mathfrak A.$ In this case, we  say that $A$ is a (right) $\mathfrak A$-$C^*$-module, or simply a $C^*$-module (it is then understood that the algebra and module structures on $A$ are compatible in the above sense). A $C^*$-subalgebra which is also an $\mathfrak A$-submodule is simply called a $C^*$-submodule.

We say that the action is {\it non-degenerate} if, given $\alpha\in\mathfrak A$, $ a\cdot\alpha=0$ for each $a\in A$ implies that $\alpha=0$. We need this condition when we work with the unitization (see the next section).

We write $\mathfrak A\cdot A$ for the closed linear span of the set of elements of the form $ a\cdot\alpha$, with $\alpha\in\mathfrak A, \ a\in A$. We say that $A$ is {\it neo-unital} (as an $\mathfrak A$-module) if $ A\cdot\mathfrak A=A$. In this case, if $\mathfrak A$ is a unital $C^*$-algebra, then $ a\cdot 1_{\mathfrak A}=a$, for $a\in A$. When $\mathfrak A$ is a unital $C^*$-algebra, we say that $A$ is unital (as an $\mathfrak A$-module), if $A$ is a unital $C^*$-algebra and a neo-unital $\mathfrak A$-module.

Let $Z(A)$ be the center of $A$. An element $a\in A$ is called $\mathfrak A$-central if $ a\cdot\alpha\in Z(A)$, for each $\alpha\in \mathfrak A$. We say that $\mathfrak A$ acts centrally on $A$, or $A$ is a {\it central} $\mathfrak A$-module, if $Z(A)$ is a submodule of $A$. When $\mathfrak A$ is a unital $C^*$-algebra, we say that $A$ is unital (as an  $\mathfrak A$-module), if $A$ is a unital $C^*$-algebra and a neo-unital $\mathfrak A$-module. In this case, $A$ is a central $\mathfrak A$-module if and only if $1_A$ is a central element of $A$. this is also equivalent to the following compatibility condition:
$$(a\cdot\alpha)(b\cdot\beta)= (ab)\cdot\alpha\beta,$$
for each $\alpha, \beta\in \mathfrak A$, each $a\in Z(A)$, and $b\in A$.

In some cases we have to work with operator $\mathfrak A$-modules with no algebra structure (and in particular with certain Hilbert $\mathfrak A$-modules). If $E, F$ are operator $\mathfrak A$-modules, a module map $\phi: E\to F$ is a continuous linear map which preserves the left $\mathfrak A$-module action.

A Hilbert space $H$ with a non-degenerate representation $\rho$ of $\mathfrak A$ in $H$ is called an $\mathfrak A$-Hilbert space. We may regard $H$ as a left $\mathfrak A$-module (where $\mathfrak A$ acts through $\alpha\cdot\xi :=\rho(\alpha)\xi$). In this paper we work with two types of representations of operator $\mathfrak A$-modules. One is representation  in left $\mathfrak A$-Hilbert $C^*$-modules and the other is representation  in $\mathfrak A$-Hilbert spaces. We usually work with the former, but from time to time, we also need to work with the latter.

Throughout this paper, we use the notations $\mathbb K(X)$ and $\mathbb B(X)$ to denote the set of compact and bounded adjointable linear operators on a (left) Hilbert  $\mathfrak A$-module $X$ (see \cite{lan} and \cite{mt}, for details). Note that $\mathbb B(X)$ is a  right $\mathfrak A$-$C^*$-module under the action
$(T\cdot\alpha)(\xi)=T(\alpha\cdot\xi),$ for $\alpha\in \mathfrak A, \xi\in X, t\in \mathbb B(X)$.

\section{nuclear module maps}

Let $A, B$ be operator $\mathfrak A$-modules and $E$ be an operator system and operator $\mathfrak A$-module. In this section we define a notion of $E$-nuclearity for module maps $\theta: A\to B$. Two cases of particular interest are when $E=\mathfrak A$ or $\mathfrak A^{**}$. When $A\subseteq B$ is a submodule, a conditional expectation $\mathbb E: B\to A$, which is also a $\mathfrak A$-module map, is called a $\mathfrak A$-conditional expectation.

We freely use the abbreviations and notations of \cite{bo}, in particular, c.p., u.c.p., and c.c.p. stand for completely positive, unital completely positive, and contractive completely positive, respectively.

The notion of $E$-nuclear maps are defined based on the possibility of approximate decomposition through $\mathbb M_{k_n}(E)$. It turns out that it would be too restrictive to assume that the maps of approximate decomposition are module maps. Instead, we specify a class of "admissible" maps which could make a decomposition.

\begin{definition}
The class of {\it admissible} c.p. maps between operator $\mathfrak A$-modules is characterized through the following set of rules:

$(i)$ a c.p. module map is admissible,

$(ii)$ if a c.p. map $\phi: A\to B$ is admissible, then so are the maps $x\mapsto \phi(uxu^*)$ and $x\mapsto v\phi(x)v^*$, for each $u\in A$ and $v\in B$,

$(iii)$ the c.p. maps $\phi: A\to B$ of the form $\phi(x)=\rho(x)u$, where $\rho\in A^*_{+}$ and $u\in B^{+}$, are admissible,

$(iv)$ the composition, positive multiples, or finite sums of admissible c.p. maps are admissible.

\end{definition}

\begin{definition}
A module map $\theta: A\to B$ is called $E$-{\it nuclear} if there are c.c.p. admissible maps $\varphi_n: A\to \M$ and $\psi_n: \M\to B$ such that $\psi_n\circ\varphi_n\to \theta$ in point-norm topology, that is,
$$\|\psi_n\circ\varphi_n(a)-\theta(a)\|\to 0,$$
for each $a\in A$.
When $B$ is a von Neumann algebra, a module map $\theta: A\to B$ is called $E$-{\it weakly nuclear} if there are c.c.p. admissible maps $\varphi_n: A\to \M$ and $\psi_n: \M\to B$ such that $\psi_n\circ\varphi_n\to \theta$ in point-ultraweak topology.
\end{definition}

A nuclear map is automatically c.c.p. The following lemma is proved similar to the classical case \cite[Exercises 2.1.3-4,7-8]{bo}.

\begin{lemma} \label{nuc}
Let $\theta: A\to B$ be a c.c.p. module map.

$(i)$ $($restriction$)$ If $C\subseteq A$ is a $C^*$-submodule and $\theta$ is $E$-nuclear, then so is its restriction $\theta|_C: C\to B$.

$(ii)$ $($dependence on range$)$ If  $\theta$ is $E$-nuclear and $D\subseteq B$ is a $C^*$-submodule with $\theta(A)\subseteq D$, then under any of the following conditions, $\theta: A\to D$  is $E$-nuclear:

\hspace{.3cm} $(ii$-$1)$ There is a conditional expectation $\mathbb E: B\to D$,

\hspace{.3cm} $(ii$-$2)$ There is a sequence of c.c.p. admissible maps $\mathbb E_n: B\to D$ such that $\mathbb E_n\to id_D$ on $D$ in the point-norm topology.

$(ii)$ $($composition$)$ If $C$ is a $C^*$-module and $\sigma: B\to C$ is a c.c.p. module map such that $\theta$ or $\sigma$ is $E$-nuclear, then so is the composition $\sigma\circ\theta$.
\end{lemma}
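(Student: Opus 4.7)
The plan is to adapt the classical arguments from \cite[Exercises 2.1.3-4,7-8]{bo} verbatim, checking at each step that the module structure is preserved under the operations involved (restriction, composition with module maps, and post-composition with module conditional expectations). Throughout, I will use the fact that compositions of c.c.p. module maps are c.c.p. module maps, and that c.c.p. maps are in particular contractive.

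For (i), given approximating c.c.p. module maps $\varphi_n\colon A\to\M$ and $\psi_n\colon\M\to B$ witnessing $E$-nuclearity of $\theta$, I would simply restrict: $\varphi_n|_C$ remains a c.c.p. module map, $\psi_n$ is unchanged, and $\psi_n\circ\varphi_n|_C(c)=\psi_n\circ\varphi_n(c)\to\theta(c)=\theta|_C(c)$ for $c\in C$.

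For (ii-1), if $\mathbb{E}\colon B\to D$ is a c.c.p. module conditional expectation, I would replace $\psi_n$ by $\mathbb{E}\circ\psi_n\colon\M\to D$, which is c.c.p. and module. Since $\theta(a)\in D$, we have $\mathbb{E}(\theta(a))=\theta(a)$, and hence $\|\mathbb{E}\circ\psi_n\circ\varphi_n(a)-\theta(a)\|=\|\mathbb{E}(\psi_n\circ\varphi_n(a)-\theta(a))\|\le\|\psi_n\circ\varphi_n(a)-\theta(a)\|\to 0$. For (ii-2), I would form the doubly-indexed net $\mathbb{E}_m\circ\psi_n\colon\M\to D$ and use the triangle inequality
\[
\|\mathbb{E}_m\circ\psi_n\circ\varphi_n(a)-\theta(a)\|\le\|\psi_n\circ\varphi_n(a)-\theta(a)\|+\|\mathbb{E}_m(\theta(a))-\theta(a)\|,
\]
then extract a diagonal sequence (if working with sequences in the separable setting) or pass to the product directed set, so that both terms tend to zero simultaneously. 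The main subtlety here is whether ``point-norm'' approximation is by sequences or nets; I would phrase the argument with nets to be safe, which removes any separability issue from the diagonal extraction.

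For (iii) (composition), I split on which map is $E$-nuclear. If $\theta$ is $E$-nuclear via $(\varphi_n,\psi_n)$, then $(\varphi_n,\sigma\circ\psi_n)$ witnesses $E$-nuclearity of $\sigma\circ\theta$, since $\sigma\circ\psi_n$ is c.c.p. and module, and continuity of $\sigma$ gives $\sigma\circ\psi_n\circ\varphi_n(a)\to\sigma\circ\theta(a)$. If instead $\sigma$ is $E$-nuclear via $(\varphi_n',\psi_n')$, then $(\varphi_n'\circ\theta,\psi_n')$ works: the composition $\varphi_n'\circ\theta$ is c.c.p. and module, and pointwise convergence evaluates $\psi_n'\circ\varphi_n'$ at the point $\theta(a)\in B$, giving $\psi_n'\circ\varphi_n'\circ\theta(a)\to\sigma(\theta(a))$. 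None of these steps is substantively different from the classical case; the only genuine check is that the module action is respected, which follows because each of the building blocks (restriction, pre/post composition with a module map, and the module conditional expectations assumed in (ii)) visibly preserves the right $\mathfrak{A}$-action. The hardest step is therefore the diagonal argument in (ii-2), and the only place where care is needed is the precise convergence mode (net versus sequence) in point-norm approximation.
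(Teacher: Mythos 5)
Your proposal is correct and follows exactly the route the paper intends: the paper gives no written proof, stating only that the lemma ``is proved similar to the classical case \cite[Exercises 2.1.3-4,7-8]{bo}'', and your arguments (restriction of $\varphi_n$, post-composition with the module conditional expectation, the net/diagonal argument for (ii-2), and pre/post-composition for the two composition cases) are precisely those classical arguments with the module structure checked at each step. Your observation that the only points needing care are the net-versus-sequence issue in (ii-2) and the preservation of the $\mathfrak A$-action is accurate, and both are handled correctly.
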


In particular, if $id_A: A\to A$ is $E$-nuclear, then so is any c.c.p. module map $\theta: A\to B$, for any $C^*$-module $B$.

Let $X$ be a left Hilbert $\mathfrak A$-module. An adjointable operator $q: X\to X$ is called a {\it projection} if $q^2=q=q^*$. A projection is said to be {\it finite rank} if its range is a finitely-generated $\mathfrak A$-submodule. In this case rank$(q)$ is the dimension of its range as an $\mathfrak A$-module (that is the minimum number of generators). A typical example is provided by the element $\theta_{x,x}$, defined by 
$$\theta_{x,x}(y):= \langle x,y\rangle\cdot x \ \ (x,y\in X),$$
whenever $\langle x,x\rangle\in\mathfrak A$ is a projection. The linear span of finite rank projections is known to be dense in $\mathbb K(X)$ \cite[page 10]{lan}. When $\mathfrak A$ is unital and $X=H\otimes \mathfrak A$, then for a finite rank projection $p\in\mathbb K(H)$ in the usual sense, $q:=p\otimes 1_{\mathfrak A}$ is a finite rank projection in $\mathbb K(H\otimes \mathfrak A)=\mathbb K(H)\otimes \mathfrak A$ and $q(H\otimes \mathfrak A)=pH\otimes \mathfrak A\simeq \ell^2_n\otimes \mathfrak A$, where $n=$rank$(p)$ \cite[page 10]{lan}. In this case, $\mathfrak A$ acts only on the second leg, namely,
$$\alpha\cdot(\xi\otimes\beta)=\xi\otimes \alpha\beta, \ \ (\xi\in H, \alpha,\beta\in \mathfrak A).$$

\begin{lemma} \label{fr}
	If $\mathfrak A$ is unital, for any Hilbert space $H$, there is a net $(q_i)$ of finite rank projections in $\mathbb B(H\otimes \mathfrak A)$, increasing to 1 in the strong operator topology, such that $q_i\mathbb B(H\otimes \mathfrak A)q_i\simeq \mathbb M_{n_i}(\mathfrak A)$, as $C^*$-algebras and left $\mathfrak A$-modules, where $n_i={\rm rank}(q_i)$.  
\end{lemma}
\begin{proof}
	Choose an increasing net $(p_i)$ of finite rank projections in $\mathbb B(H)$, increasing to 1 in SOT. Put $q_i:=p_i\otimes 1_{\mathfrak A}$ and observe that  
	 $$q_i\mathbb K(H\otimes \mathfrak A)q_i=\mathbb{K}(p_iH\otimes \mathfrak A)\simeq \mathbb M_{n_i}(\mathfrak A),$$ where the last isomorphism is shown in \cite[Proposition 2.2.2$(ii)$]{mt}, with  $n_i={\rm rank}(q_i) ={\rm rank}(p_i)$. Next, note that 
	 $$q_i\mathbb B(H\otimes \mathfrak A)q_i=\mathbb B(p_iH\otimes \mathfrak A)=\mathbb K(p_iH\otimes \mathfrak A)=q_i\mathbb K(H\otimes \mathfrak A)q_i,$$
	 where the second equality follows from \cite[Theorem 2.4]{lan} and the fact that the algebra $\mathbb K(p_iH\otimes \mathfrak A)=\mathbb  K(p_iH)\otimes \mathfrak A$ is unital (as $p_i$ is finite rank). Finally, it is easy to check that all the natural isomorphisms involved are also module maps, since for basic tensors $x:=\xi\otimes \beta\in H\otimes \mathfrak A$, 
	 $$q_i(\alpha \cdot x)=(p_i\otimes 1_{\mathfrak A})(\alpha\cdot(\xi\otimes\beta) )=(p_i\xi\otimes \alpha\beta)=\alpha\cdot(p_i\xi\otimes \beta) =\alpha \cdot q_i(x) ,$$
	 for $\xi\in H$ and $\alpha,\beta\in \mathfrak A$. 	  	  
\end{proof}

The above result does not hold for general Hilbert $\mathfrak A$-modules. Indeed, in general $\mathbb B(X)$ is only a $C^*$-algebra and may fail to contain any non-trivial projection. If  $X$ is a self dual left Hilbert $\mathfrak A$-module with a  standard Riesz basis, one could recover part of the proof by finding a net of finite rank projections increasing SOT to 1. Recall that for a finite or countable
index set $J$, a  sequence $\{x_j : j \in J\}$ of elements in a right Hilbert $\mathfrak A$-module $X$
is called a frame if there are  constants $C, D > 0$ such that
$$C\langle x,x\rangle\leq \langle x, x_j\rangle\langle x_j, x\rangle \leq D \langle x,x\rangle,$$
for every $x\in X$. Such a frame is  normalized if $C = D = 1$ and standard if moreover the sum in the middle of the above inequality converges in norm. A Riesz basis 
is a frame which is also generating a dense subset of $X$. When we moreover have the uniqueness of expansion (i.e., an $\mathfrak A$-linear
finite combination $\sum_i \alpha_j\cdot x_j$ is zero if and only if every summand $\alpha_j\cdot x_j$ equals zero) this is called a standard Riesz basis for $X$ \cite[Definition 2.1]{fl}. Now since $X$ is a self dual, $\mathbb B(X)$ is a von Neumann algebra by \cite[Proposition 3.10]{pa}, hence there is an increasing net $\{q_i\}\subseteq \mathbb B(X)$ of finite rank projections, converging to the identity in the strong operator topology. However, though $q_i\mathbb B(X)q_i=\mathbb B(q_i X)$ and $q_iX$ is a finitely generated $\mathfrak A$-module, in general it is not possible to identify $q_iX$ with $\ell^2_{n_i}\otimes \mathfrak A$, for $n_i={\rm rank}(q_i)$.  

\begin{proposition}
	Let $\mathfrak A$ be a von Neumann algebra and $A$ be a von Neumann algebra and a module with compatible actions. Let $H$ be a Hilbert space. Then any faithful representation $A\hookrightarrow \mathbb B(H\otimes \mathfrak A)$ is $\mathfrak A$-weakly nuclear.
\end{proposition}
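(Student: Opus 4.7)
The plan is to imitate the classical argument showing that $\mathbb{B}(H)$ is semidiscrete, ensuring every map respects the $\mathfrak{A}$-module structure. Write $\pi\colon\mathfrak{A}\to\mathbb{B}(H)$ for the non-degenerate representation giving $H$ its $\mathfrak{A}$-Hilbert space structure; the module-map hypothesis on $\theta$ then reads $\theta(a\cdot\alpha)=\theta(a)\pi(\alpha)$. The aim is to build nets of c.c.p.\ module maps $\varphi_n\colon A\to\MM$ and $\psi_n\colon\MM\to\mathbb{B}(H)$ with $\psi_n\circ\varphi_n\to\theta$ in point-ultraweak topology.

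First, by standard spatial theory for von Neumann algebras, I would replace $\pi$, up to unitary equivalence, by an amplification $\pi_0\otimes 1_{\mathcal{K}}$ on $H_0\otimes\mathcal{K}$, where $\pi_0$ is the standard form of $\mathfrak{A}$ and $\mathcal{K}$ is an auxiliary Hilbert space. Fix an increasing net of finite-rank projections $(q_\lambda)$ on $\mathcal{K}$ with $q_\lambda\uparrow 1_{\mathcal{K}}$ strongly, and set $P_\lambda:=1_{H_0}\otimes q_\lambda\in\pi(\mathfrak{A})'$. Because $P_\lambda$ commutes with $\pi(\mathfrak{A})$, the compression $\Phi_\lambda(T):=P_\lambda T P_\lambda$ is a c.c.p.\ $\mathfrak{A}$-module map into $P_\lambda\mathbb{B}(H)P_\lambda\cong B(H_0)\otimes\mathbb{M}_{k(\lambda)}(\mathbb{C})$, converging to $\mathrm{id}$ point-strongly and hence point-ultraweakly on bounded subsets.

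Inside $B(H_0)\otimes\mathbb{M}_{k(\lambda)}(\mathbb{C})$ sits the canonical copy of $\MM\cong\pi_0(\mathfrak{A})\otimes\mathbb{M}_{k(\lambda)}(\mathbb{C})$. I would take $\psi_n$ to be the inclusion of this copy into $\mathbb{B}(H)$, which is a c.c.p.\ module map since $\pi=\pi_0\otimes 1$ respects the tensor decomposition. To define $\varphi_n$, I would set $\varphi_n:=\mathcal{E}\circ\Phi_\lambda\circ\theta$, where $\mathcal{E}\colon B(H_0)\otimes\mathbb{M}_{k(\lambda)}(\mathbb{C})\to\pi_0(\mathfrak{A})\otimes\mathbb{M}_{k(\lambda)}(\mathbb{C})$ is an entry-wise c.c.p.\ module expectation obtained from a c.c.p.\ module projection $\mathcal{E}_0\colon B(H_0)\to\pi_0(\mathfrak{A})$. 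The required convergence $\psi_n\circ\varphi_n(a)\to\theta(a)$ then follows from $P_\lambda\uparrow 1$ strongly combined with the normality of $\mathcal{E}$ against functionals in the predual.

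The main obstacle is producing the module expectation $\mathcal{E}_0\colon B(H_0)\to\pi_0(\mathfrak{A})$: such a conditional expectation exists unconditionally only when $\mathfrak{A}$ is injective. I expect the argument to be completed either by (i) invoking injectivity of $\mathfrak{A}$ as a standing assumption of the surrounding theory, (ii) replacing the target $\MM$ by $\MMM$ and using the canonical normal projection onto the bidual, which is exactly the role the author assigns to $E=\mathfrak{A}^{**}$ in the definition of $E$-nuclearity, or (iii) exploiting the module identity $\theta(a)\pi(\alpha)=\theta(a\cdot\alpha)$ to show that $\Phi_\lambda\circ\theta$ already takes values in the smaller algebra $\pi_0(\mathfrak{A})\otimes\mathbb{M}_{k(\lambda)}(\mathbb{C})$, thereby bypassing $\mathcal{E}$ entirely. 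This is the single delicate point of the proof; once it is addressed, the rest is a straightforward transcription of the classical compression argument.
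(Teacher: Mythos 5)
Your construction stalls exactly where you say it does, and none of your three escape routes closes the gap under the stated hypotheses. The proposition assumes only that $\mathfrak A$ is a von Neumann algebra, so route (i) is unavailable: injectivity enters the paper only later (Lemma \ref{injc}, Proposition \ref{inj}), not here. Route (ii) proves a different statement ($\mathfrak A^{**}$-weak nuclearity rather than $\mathfrak A$-weak nuclearity), and in any case a u.c.p.\ module projection $\mathbb B(H_0)\to \pi_0(\mathfrak A)^{**}$ composed with the canonical normal projection $\mathfrak A^{**}\to\mathfrak A$ (cut-down by the central cover of the identity representation) would manufacture a genuine conditional expectation $\mathbb B(H_0)\to\pi_0(\mathfrak A)$, i.e.\ injectivity of $\mathfrak A$ again. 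Route (iii) is simply false: take $A=\mathbb B(H)$ with its canonical right action $T\cdot\alpha=T\pi(\alpha)$ and $\theta$ the identity; this is a module map, yet $P_\lambda \theta(A)P_\lambda$ is all of $\mathbb B(H_0)\otimes\mathbb M_{k(\lambda)}(\mathbb C)$, nowhere near $\pi_0(\mathfrak A)\otimes\mathbb M_{k(\lambda)}(\mathbb C)$ --- the module identity constrains right multiplication by $\pi(\alpha)$ and says nothing about where compressions land. (A secondary imprecision: a general non-degenerate normal representation of $\mathfrak A$ is unitarily equivalent to a \emph{reduction} of an amplification of the standard form by a projection in the commutant, not to the amplification itself; this is fixable, unlike the expectation problem.)

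The paper's own proof avoids the expectation entirely, and this is the idea your attempt is missing: instead of compressing by scalar finite-rank projections $1\otimes q_\lambda$ (whose corners are $\mathbb B(H_0)\otimes\mathbb M_k(\mathbb C)$ and then need cutting down), it chooses an increasing net $\{p_i\}$ of projections whose corners $p_i\mathbb B(H)p_i$ are \emph{already} identified with $\mathbb M_{n_i}(\mathfrak A)$ and for which the compressions $\varphi_i:A\to\mathbb M_{n_i}(\mathfrak A)$ are c.c.p.\ module maps; here ``finite rank'' is rank in the module sense, and the existence of such projections is imported wholesale from \cite[Theorem 3.8]{am} (compare the remark following the proposition, on self-dual Hilbert $\mathfrak A$-modules with standard frames, where such corners arise naturally). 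The maps $\psi_i$ are then the corner inclusions, and point-ultraweak convergence $\psi_i\circ\varphi_i\to\mathrm{id}$ is verified on vector functionals via Paschke's embedding $\mathbb B(H)\subseteq (H\otimes\bar H\otimes\mathfrak A_*)^*$ followed by a density argument. So the structural input your compression scheme lacks --- a supply of projections whose corners are matrix algebras over $\mathfrak A$ rather than over $\mathbb C$ --- is precisely what the cited preprint supplies; without it (or without injectivity of $\mathfrak A$) your argument does not close, and your own diagnosis that the scalar compression cannot reach $\mathbb M_{k}(\mathfrak A)$ unaided is correct.
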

\begin{proof}
	Use Lemma \ref{fr} to choose an increasing net $\{q_i\}\subseteq \mathbb B(H\otimes \mathfrak A)$ of finite rank projections, converging to the identity in the strong operator topology such that $q_i\mathbb B(H\otimes\mathfrak A)q_i$ could be identified with $\mathbb M_{n_i}(\mathfrak A)$, for $n_i=$rank$(q_i)$, and note that the corresponding c.c.p. compression map $\varphi_i: A\to \mathbb M_{n_i}(\mathfrak A)$ is a module map. This composed with the inclusion map (after identification) $\psi_i: \mathbb M_{n_i}(\mathfrak A)\to \mathbb B(H\otimes\mathfrak A)$ gives an approximate decomposition in the ultraweak topology of $\mathbb B(H\otimes \mathfrak A)$. Indeed, by the proof of \cite[Proposition 3.10]{pa}, $\mathbb B(H\otimes \mathfrak A)\subseteq ((H\otimes \mathfrak A)\otimes (\bar H\otimes\mathfrak A^{\rm op})\otimes \mathfrak A_{*})^{*}$, where $\bar H$ is the  Hilbert space of the contragredient representation and $\mathfrak A^{\rm op}$ is the opposite algebra with reversed product and conjugate scalar product. Given a basic tensor $x=\xi\otimes\alpha$, let us put $\bar x:=\bar\xi\otimes \alpha^*$, and extend the bar operation to the whole $H\otimes \mathfrak A$ (see, \cite[page 455]{pa} for the general case). Next, let us  identify $A$ with its image in $\mathbb B(H\otimes \mathfrak A)$, and observe that, for each $x,y\in H\otimes\mathfrak A$,  $a\in A$, and $\phi\in\mathfrak A_*$, 
	$$(x\otimes\bar y\otimes\phi)(\psi_i\circ\varphi_i(a)-a)=\phi(\langle(\psi_i\circ\varphi_i(a)-a)x,y\rangle)\to 0,$$
	as $i\to\infty$. Indeed, for basic tensors $x=\xi\otimes\alpha$ and $y=\eta\otimes\beta$, we have
	\begin{align*}
		\phi(\langle(\psi_i\circ\varphi_i(a)-a)x,y\rangle)&=\phi(\langle((q_iaq_i-a)x,y\rangle)\\& =\phi(\langle a\big((p_i\xi-\xi)\otimes\alpha\big),\eta\otimes\beta\rangle)\to 0,  
	\end{align*}
	as $p_i\to 1$ in SOT. The result now follows by a standard density argument.
\end{proof}

\begin{definition}\label{exa}
Let $A$ be a $C^*$-algebra and a right $\mathfrak A$-module with compatible actions, let $E$ be an operator system and operator $\mathfrak A$-module. Then $A$ is called $E$-{\it nuclear} if the identity map on $A$ is $E$-nuclear. Also $A$ is called $E$-{\it exact} if there is $E$-nuclear faithful representation $\pi: A\to \mathbb B(X)$, for some left Hilbert $\mathfrak A$-module $X$. When $\mathfrak A$ is a von Neumann algebra and $A$ is a von Neumann algebra and a module with compatible actions, then $A$ is called $E$-{\it semidiscrete} if the identity map on $A$ is $E$-weakly nuclear.
\end{definition}

By Lemma \ref{nuc}$(ii)$, if  $\pi: A\to \mathbb B(X)$ is a faithful representation of $A$ in a left $\mathfrak A$-Hilbert module $X$, then $A$ is $E$-nuclear ($E$-exact) if and only if $\pi$ is $E$-nuclear ($E$-exact) as a module map from $A$ to $\pi(A)$ (to $\mathbb B(X)$). 

\begin{definition}
A c.p. module map $\theta: A\to B$ is called $E$-{\it factorable} if there is a positive integer $n$ and c.p. admissible maps $\varphi: A\to \mathbb M_n(E)$ and $\psi: \mathbb M_n(E)\to B$ such that $\psi\circ\varphi=\theta$.
\end{definition}

\begin{lemma} \label{convex}
Let $\mathbb B_{\mathfrak A}(A)$ be the Banach space of all bounded linear module maps on $A$, then

$(i)$ each convex subset $\mathcal C\subseteq \mathbb B_{\mathfrak A}(A)$  has the same point-norm and point-weak closures in $\mathbb B_{\mathfrak A}(A)$,

$(ii)$ the set of $E$-factorable module maps from $A$ to $B$ is convex. The same holds for the set of module maps which are $E$-factorable through c.c.p. admissible maps.

\end{lemma}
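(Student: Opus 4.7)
The plan is to handle the two parts separately: (i) is a Mazur-type statement about two locally convex topologies on $\mathbb B_{\mathfrak A}(A)$, while (ii) is a block-diagonal amplification.

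For (i), I would first observe that both topologies on $\mathbb B_{\mathfrak A}(A)$ are locally convex, generated respectively by the seminorms $\theta \mapsto \|\theta(a)\|$ and $\theta \mapsto |\phi(\theta(a))|$ for $a \in A$ and $\phi \in A^*$. Since on $A$ itself the norm and weak topologies share the same continuous dual, the two induced topologies on $\mathbb B_{\mathfrak A}(A)$ have the same continuous linear functionals, and the standard Hahn--Banach separation argument forces every convex set to have the same closure in both. In concrete form: if $\theta$ lies in the point-weak closure of a convex $\mathcal C$, then for any finite $a_1, \dots, a_n \in A$ the tuple $(\theta(a_1), \dots, \theta(a_n))$ sits in the weak closure of the convex set $\{(\sigma(a_1), \dots, \sigma(a_n)) : \sigma \in \mathcal C\} \subseteq A^n$, and by Mazur's theorem it is therefore in the norm closure. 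This yields, for each $\varepsilon > 0$, some $\sigma \in \mathcal C$ with $\max_i \|\sigma(a_i) - \theta(a_i)\| < \varepsilon$, and the net indexed by pairs (finite subset of $A$, $\varepsilon > 0$) then converges to $\theta$ in point-norm.

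For (ii), suppose $\theta_j = \psi_j \circ \varphi_j$ are $E$-factorizations with $\varphi_j : A \to \mathbb M_{n_j}(E)$ and $\psi_j : \mathbb M_{n_j}(E) \to B$ for $j = 1, 2$, and let $\lambda \in [0,1]$. Set $N = n_1 + n_2$ and define $\varphi : A \to \mathbb M_N(E)$ block-diagonally by $\varphi(a) = \varphi_1(a) \oplus \varphi_2(a)$, together with $\psi : \mathbb M_N(E) \to B$ by $\psi(x) = \lambda \psi_1(p_1 x p_1) + (1-\lambda)\psi_2(p_2 x p_2)$, where $p_1, p_2$ are the scalar corner projections in $\mathbb M_N(\mathbb C) \subseteq \mathbb M_N(E)$ and $p_j \mathbb M_N(E) p_j$ is identified with $\mathbb M_{n_j}(E)$. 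A direct computation gives $\psi \circ \varphi = \lambda \theta_1 + (1-\lambda)\theta_2$. The block-diagonal $\varphi$ is c.p. as the diagonal $A \to A \oplus A$ followed by the direct sum of the $\varphi_j$, and is contractive in the c.c.p.\ case since $\|\varphi(a)\| = \max_j \|\varphi_j(a)\| \le \|a\|$; the map $\psi$ is a convex combination of compositions of (c.c.p.) corner compressions with the (c.c.p.) $\psi_j$, hence inherits the same regularity.

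The main thing to verify throughout is that all the constructed maps remain right $\mathfrak A$-module maps, but this is immediate: the $\mathfrak A$-action on $\mathbb M_N(E)$ is defined entrywise, and therefore commutes with both the block-diagonal embedding and with the compressions $x \mapsto p_j x p_j$ by scalar projections; the module property is then preserved under composition and convex combination. No deeper obstacle appears in either part.
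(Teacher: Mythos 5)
Your proof is correct and takes essentially the same route as the paper, whose proof simply defers to the classical case \cite[2.3.4, 2.3.6]{bo}: the Hahn--Banach/Mazur separation argument applied to the tuple sets in $A^n$ for part $(i)$, and the standard block-diagonal amplification with corner compressions for part $(ii)$. Your explicit check that everything remains an $\mathfrak A$-module map (because the action on $\mathbb M_N(E)$ is entrywise and so commutes with the diagonal embedding and the compressions $x\mapsto p_jxp_j$) is precisely the routine verification the paper leaves implicit under ``standard algebraic manipulations.''
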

\begin{proof}
Like in the classical case \cite[2.3.4, 2.3.6]{bo}, part $(i)$ follows from Hahn-Banach theorem and part $(ii)$ follows by standard algebraic manipulations.
\end{proof}

When $\mathfrak A$ is a unital $C^*$-algebra and the action is non-degenerate, then $\tilde A:=A\times \mathfrak A$ is called the (minimal) {\it unitization} of $A$. It is a unital $C^*$-algebra under the norm $\|(a,\alpha)\|=\sup\{\|ab+ b\cdot\alpha\|: b\in A, \|b\|=1\}$ and multiplication
$$(a,\alpha)(b,\beta)=(ab+ b\cdot\alpha+ a\cdot\beta, \alpha\beta),$$
with unit $(0, 1_{\mathfrak A})$. Then $A$ is identified with a closed ideal in $\tilde A$. The module action of $\mathfrak A$ on $\tilde A$ is defined by $ (a,\beta)\cdot\alpha=(a\cdot\alpha, \beta\alpha)$. Clearly $\tilde A$ is neo-unital as a $\mathfrak A$-module, if $A$ is so. One should note that $\tilde A$ is not the same as the direct sum $A\oplus \mathfrak A$ (whose elements are denoted by $a\oplus\alpha$). In the rest of this paper, whenever we work with the minimal unitization, we assume that the action is non-degenerate.

The next result extends \cite[2.2.1-2.2.4]{bo} with similar proofs (except that here we should also take care of the module actions).

\begin{proposition}\label{unital}
Let $\mathfrak A$ be a unital $C^*$-algebra, $B$ be a central $\mathfrak A$-module, and $\theta: A\to B$ be a c.c.p. module map.

$(i)$ If $B$ is unital, $\theta: A\to B$ extends to a u.c.p. module map
$$ \tilde\theta: \tilde A\to B;\ \ \tilde\theta(a, \alpha):=\theta(a)+  1_B\cdot\alpha.$$
In general, $\theta: A\to B$ extends to a u.c.p. module map
$$ \tilde\theta: \tilde A\to \tilde B;\ \ \tilde\theta(a, \alpha):=(\theta(a),  1_{\tilde B}\cdot\alpha).$$

$(ii)$ If $E$ is unital and $\theta$ is $E$-nuclear, then  so is $\tilde\theta$.
\end{proposition}
\begin{proof}
$(i)$ We have to show that $\tilde \theta$ is c.p. map. The action of $\mathfrak A$ on $A$ extends to an action on $A^{**}$ which satisfies the compatibility conditions, and $A^{**}$ is a neo-unital $\mathfrak A$-module. Thus $\tilde A$ could be identified with $A+ 1_{A^{**}}\cdot\mathfrak A\subseteq A^{**}$, and $\tilde\theta$ is the restriction of the c.p. map $\theta^{**}$, and so  $\tilde\theta: \tilde A\to B^{**}$ is a c.p. map. Next we have the matrix equation
$$\tilde\theta_n[a_{ij}+ 1_{A**}\cdot\alpha_{ij}]-(\theta^{**})_n[a_{ij}+ 1_{A**}\cdot\alpha_{ij}]={\rm diag}\big(1_B-\theta^{**}(1_{A^{**}})\big)[ 1_{B}\cdot\alpha_{ij}].$$
If $[a_{ij}+ 1_{A**}\cdot\alpha_{ij}]\in \mathbb M_n(\tilde A)_{+}$, then $[ 1_{A^{**}}\cdot\alpha_{ij}]\in \mathbb M_n(A^{**})_{+}$, since $\mathbb M_n(\tilde A\cdot 1_{A^{**}})\subseteq \mathbb M_n(A^{**})$ could be identified with the quotient of $\mathbb M_n(A^{**})$ by the closed ideal $\mathbb M_n(\mathbb C\cdot 1_{A^{**}})$. Thus
$$[ 1_{B}\cdot\alpha_{ij}]=(\tilde\theta)_n[ 1_{A**}\cdot\alpha_{ij}]\in \mathbb M_n(B^{**})_{+}.$$
On the other hand, by assumption $ 1_B\cdot\alpha_{ij}$ is in the center of $B$, and in particular,
\begin{align*} ( 1_{B}\cdot\alpha_{ij})\big(1_B-\theta^{**}(1_{A^{**}})\big)
  &=\big(1_B-\theta^{**}(1_{A^{**}})\big)( 1_{B}\cdot\alpha_{ij}),
  \end{align*}
 hence the positive matrices $[ 1_{B}\cdot\alpha_{ij}]$ and ${\rm diag}\big(1_B-\theta^{**}(1_{A^{**}})\big)$ commute and their product is positive in $\mathbb M_n(B^{**})$. Therefore,
$$\tilde\theta_n[a_{ij}+ 1_{A**}\cdot\alpha_{ij}]\geq(\theta^{**})_n[a_{ij}+ 1_{A**}\cdot\alpha_{ij}]\geq 0,$$
as required.

$(ii)$ We may assume that $B$ is unital. By assumption, let  $\varphi_n: A\to \M$ and $\psi_n: \M\to B$ be c.c.p. admissible maps with $\psi_n\circ\varphi_n\to \theta$ in point-norm topology. Extend $\varphi_n$ to u.c.p. admissible map $\tilde\varphi_n: \tilde A\to \M$ and note that $\psi_n\circ\tilde\varphi_n$ converges to $\tilde\theta$ in the point norm topology.
\end{proof}

 Let $E$ be a $C^*$-algebra and a left $\mathfrak A$-module with compatible actions, then $E$ is called $\mathfrak A$-{\it injective} if for every faithful representation $E\subseteq \mathbb B(H)$, where $H$ is a left  $\mathfrak A$-Hilbert space, there is an  $\mathfrak A$-conditional expectation $\mathbb E: \mathbb B(H)\to E$. The next lemma extends \cite[2.2.5]{bo}.

\begin{lemma} \label{in}
If $\mathfrak A$ is a unital $C^*$-algebra, $E$ is a unital  $C^*$-algebra and a unital   left $\mathfrak A$-injective module with compatible actions, $A$ is unital and $\tilde\varphi: A\to \mathbb M_n(E)$ is a c.p. admissible map, then there is a u.c.p. admissible map $\varphi: A\to \mathbb M_n(E)$ such that
$$\tilde\varphi(a)=\tilde\varphi(1_A)^{\frac{1}{2}}\varphi(a)\tilde\varphi(1_A)^{\frac{1}{2}},$$
for $a\in A$.
\end{lemma}
\begin{proof}
Consider a faithful embedding $E\subseteq \mathbb B(H)$ for a left  $\mathfrak A$-Hilbert space $H$ such that $E$ contains the identity of $\mathbb B(H)$. If $u:=\tilde\varphi(1_A)\in\mathbb M_n(E)\subseteq \mathbb M_n(\mathbb B(H))=\mathbb B(H^n)$ is invertible in $\mathbb B(H^n)$, we consider the u.c.p. admissible map $\varphi_1(a)=u^{-\frac{1}{2}}\tilde\varphi(a)u^{-\frac{1}{2}}$.

In general, let $p: H^n\to \ker\tilde\varphi(1_A)$ and put $p^\perp=1-p$, then for $0\leq a\leq 1_A$ and $\zeta\in\ker\tilde\varphi(1_A)$ we have
$$-\|\varphi(a)^{\frac{1}{2}}\zeta\|^2=\langle\big(\tilde\varphi(1_A)-\tilde\varphi(a)\big)\zeta,\zeta\rangle\geq 0,$$
thus $\zeta\in\ker\tilde\varphi(a)$, in particular, $p$ acts as identity on the image of $\tilde\varphi(a)$, that is, $\tilde\varphi(a)=p^\perp\tilde\varphi(a)=\tilde\varphi(a)p^\perp$, which then holds for any $a\in A$. We let $\mathfrak A$ act on $H$ by $\alpha\cdot \xi=( 1_E\cdot\alpha)(\xi),$ for $\alpha\in\mathfrak A$ and $\xi\in H$. This gives a canonical right module structure on $\mathbb B(H)$ such that $\tilde\varphi: A\to p^\perp \mathbb B(H^n)p^\perp$ is a c.p. admissible map. By the first part of the proof, there is a u.c.p. admissible map $\varphi_1: A\to p^\perp\mathbb B(H^n)p^\perp$ satisfying $u^{\frac{1}{2}}\varphi_1(a)u^{\frac{1}{2}}=p^\perp\tilde\varphi(a)p^\perp$, for $a\in A$. Put $\varphi_2(a):=\varphi_1(a)\oplus\omega(a)p$, for some c.p. admissible map $\omega: A\to \mathfrak A$, then $\varphi_2: A\to \mathbb B(H^n)$ is a u.c.p. admissible map.

By  $\mathfrak A$-injectivity of $E$, there is an  $\mathfrak A$-conditional expectation $\mathbb E: \mathbb B(H)\to E$. Let $\mathbb E_n: \mathbb B(H^n)\to \mathbb M_n(E)$ be the amplification of $\mathbb E$ and put $\varphi=\mathbb E_n\circ\varphi_2$. Then $\varphi: A\to \mathbb M_n(E)$ is a u.c.p. admissible map.
Since $p^\perp u=up^\perp=u$ and $pu=up=0$, we get
$$
u^{\frac{1}{2}}\varphi_2(a)u^{\frac{1}{2}}=u^{\frac{1}{2}}(\varphi_1(a)+\omega(a)p)u^{\frac{1}{2}}=p^\perp\tilde\varphi(a)p^\perp+0=\tilde\varphi(a),
$$
therefore,
$$
u^{\frac{1}{2}}\varphi(a)u^{\frac{1}{2}}=u^{\frac{1}{2}}\mathbb E_n(\varphi_2(a))u^{\frac{1}{2}}=\mathbb E_n(u^{\frac{1}{2}}\varphi_2(a)u^{\frac{1}{2}})=\mathbb E_n(\tilde\varphi(a))=\tilde\varphi(a),
$$
for $a\in A$.
\end{proof}

\begin{proposition} \label{inj1}
If $\mathfrak A$ is a unital $C^*$-algebra, $E$ is a unital  $C^*$-algebra and a neo-unital   right $\mathfrak A$-injective module with compatible actions, $\theta: A\to B$ is an $E$-nuclear module map, then there are u.c.p. admissible maps $\varphi_n: \tilde A\to \mathbb M_{k(n)}(E)$ and $\psi_n: \mathbb M_{k(n)}(E)\to B$ such that
$\psi_n\circ \varphi_n\to \tilde\theta$ in the point-norm topology.
\end{proposition}
\begin{proof}
Let $\tilde\varphi_n: A\to\M$ and $\tilde\psi_n:\M\to B$ be  c.c.p. admissible maps with $\tilde\psi_n\circ \tilde\varphi_n\to \theta$ in the point-norm topology. By Lemma \ref{in}, there are u.c.p. admissible maps $\varphi_n: A\to \M$ such that
$$\tilde\varphi_n(a)=\tilde\varphi_n(1_A)^{\frac{1}{2}}\varphi_n(a)\tilde\varphi_n(1_A)^{\frac{1}{2}},$$
for $a\in A$. For $x\in\M$, let
$$\psi_n(x)=\big(\tilde\psi_n (\tilde\varphi_n(1_A))\big)^{-\frac{1}{2}}\tilde\psi_n\big(\tilde\varphi_n(1_A)^{\frac{1}{2}}x\tilde\varphi_n(1_A)^{\frac{1}{2}}\big)\big(\tilde\psi_n( \tilde\varphi_n(1_A))\big)^{-\frac{1}{2}},$$
then $\psi_n$ is a u.c.p. admissible map, and  $\psi_n\circ \varphi_n$ converges to  $\tilde\theta$ in point-norm topology.
\end{proof}

\begin{lemma} \label{injc}
Let $\mathfrak A$ be a von Neumann algebra, $A$ be a unital $C^*$-algebra and a neo-unital right $\mathfrak A$-module, $B$ be a von Neumann algebra and a right $\mathfrak A$-module, and $E$ be a von Neumann algebra and  a unital  injective right $\mathfrak A$-module, all  with compatible actions. If $\theta: A\to B$ is an $E$-weakly unital nuclear module map, then there are u.c.p. admissible maps $\varphi_n: \tilde A\to \mathbb M_{k(n)}(E)$ and $\psi_n: \mathbb M_{k(n)}(E)\to B$ such that
$\psi_n\circ \varphi_n\to \theta$ in the point-ultraweak topology.
\end{lemma}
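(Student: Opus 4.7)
The hypothesis furnishes c.c.p.\ module maps $\varphi_n:A\to\mathbb M_{k(n)}(E)$ and $\psi_n:\mathbb M_{k(n)}(E)\to B$ with $\psi_n\circ\varphi_n\to\theta$ in the point-ultraweak topology, and since any u.c.p.\ approximants compose to a u.c.p.\ point-ultraweak limit, the conclusion tacitly requires $\theta(1_A)=1_B$, which I would assume; in particular $\psi_n(\varphi_n(1))\to 1_B$. The plan is to modify each $\varphi_n$ and $\psi_n$ into a u.c.p.\ module map without destroying the composition estimate.

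To promote $\varphi_n$, I would use the operator-system unitization: set $\tilde\varphi_n(a+\lambda\cdot 1_{\tilde A})=\varphi_n(a)+\lambda\cdot 1_{\mathbb M_{k(n)}(E)}$, which will be u.c.p.\ by the standard Paulsen-type extension of a c.c.p.\ map, and a module map once the $\mathfrak A$-action on $\tilde A$ is extended by $1_{\tilde A}\cdot\alpha=1_A\cdot\alpha$ (the new unit mapping to an element central in $\mathbb M_{k(n)}(E)$). For $\psi_n$ I would apply the classical correction trick: set $q_n=1_B-\psi_n(1)\ge 0$, choose an auxiliary u.c.p.\ module map $\eta_n:\mathbb M_{k(n)}(E)\to B$ (constructed below), and define
$$\tilde\psi_n(y)=\psi_n(y)+q_n^{1/2}\eta_n(y)q_n^{1/2}.$$
Then $\tilde\psi_n(1)=\psi_n(1)+q_n=1_B$ and $\tilde\psi_n$ is c.p.\ as a sum of c.p.\ maps; because $B$ is a central $\mathfrak A$-module, the right action is implemented by multiplication by the central elements $1_B\cdot\alpha$, so $q_n^{1/2}$ commutes with the action and $\tilde\psi_n$ inherits the module property from $\psi_n$ and $\eta_n$.

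To build $\eta_n$, I would first average the matrix entries via the normalized trace $\mathbb M_{k(n)}(E)\to E$ (a u.c.p.\ module map since the $\mathfrak A$-action is entry-wise), and then compose with a u.c.p.\ module extension $\mu:E\to B$ of the canonical unital $*$-homomorphism $\mathfrak A\to Z(B)$, $\alpha\mapsto 1_B\cdot\alpha$; such an extension is provided by the module version of the Arveson--Wittstock extension theorem applied to the injective von Neumann algebra $E$. The convergence $\tilde\psi_n\tilde\varphi_n\to\theta$ on $A$ then reduces to
$$\tilde\psi_n\tilde\varphi_n(a)=\psi_n\varphi_n(a)+q_n^{1/2}\eta_n(\varphi_n(a))q_n^{1/2},$$
whose first summand tends to $\theta(a)$ by hypothesis, while the correction vanishes ultraweakly because $\psi_n(1)\ge\psi_n(\varphi_n(1))\to 1$ forces $q_n\to 0$ in the strong operator topology (a bounded positive net converging ultraweakly to zero converges in SOT), after which the usual bounded-times-SOT-null estimate closes the argument.

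The principal technical obstacle is the construction and module-compatibility of $\eta_n$: this is precisely where the hypotheses that $E$ is injective, unital, and central over $\mathfrak A$ are genuinely used, together with a module Arveson--Wittstock extension theorem. Everything else amounts to a module-theoretic repackaging of the classical correction trick from \cite[Section 2.3]{bo}.
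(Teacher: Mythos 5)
There are two genuine gaps, both in the steps your proposal itself identifies as the crux. First, your unitized map $\tilde\varphi_n(a+\lambda 1_{\tilde A})=\varphi_n(a)+\lambda 1$ is indeed u.c.p., but it is \emph{not} an $\mathfrak A$-module map: with your extended action $1_{\tilde A}\cdot\alpha=1_A\cdot\alpha\in A$, one computes $\tilde\varphi_n(1_{\tilde A}\cdot\alpha)=\varphi_n(1_A)\cdot\alpha$, whereas the module property forces $\tilde\varphi_n(1_{\tilde A})\cdot\alpha=1\cdot\alpha$; these agree only when $\varphi_n$ is already unital, which is exactly what you are trying to arrange. The paper handles this point differently: it invokes the module dilation result \cite[Lemma 2.3]{am} to produce u.c.p.\ \emph{module} maps $\varphi_n$ with $\tilde\varphi_n(a)=\tilde\varphi_n(1_A)^{1/2}\varphi_n(a)\tilde\varphi_n(1_A)^{1/2}$ (every c.c.p.\ module map is a compression of a u.c.p.\ module map); that compression identity, not a unitization, is where the hypotheses on $E$ are consumed, and it is also what lets the correction of $\psi_n$ be made compatible with the original composition.

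Second, your auxiliary map $\eta_n:\mathbb M_{k(n)}(E)\to B$ cannot be built the way you describe: injectivity of $E$ (equivalently of $\mathbb M_{k(n)}(E)$, cf.\ \cite[Corollary 3.9]{am} as used in Lemma \ref{lnuc}) extends maps \emph{into} $\mathbb M_{k(n)}(E)$, and gives nothing for u.c.p.\ module maps \emph{out of} $\mathbb M_{k(n)}(E)$ into $B$ --- for that you would need $B$ injective, which is not assumed. Moreover the lemma does not assume $B$ is a central (or even unital) $\mathfrak A$-module, so your claim that $\alpha\mapsto 1_B\cdot\alpha$ is a homomorphism into $Z(B)$, and hence that $q_n^{1/2}$ commutes with the action, is not licensed as stated (the commutation of $1_B\cdot\alpha$ with $\psi_n(1)$, hence with $q_n^{1/2}$, can in fact be salvaged from $\psi_n(1\cdot\alpha)^*=\psi_n(1\cdot\alpha^*)$ and the compatibility $(b\cdot\alpha)^*=b^*\cdot\alpha^*$, but not by centrality of $B$). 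The paper avoids needing any $\eta_n$: it corrects with a \emph{scalar} state $\rho_n$ on $\mathbb M_{k(n)}(E)$, setting $\psi_n(x)=\rho_n(x)\bigl(1_B-\tilde\psi_n(\tilde\varphi_n(1_A))\bigr)+\tilde\psi_n\bigl(\tilde\varphi_n(1_A)^{1/2}x\,\tilde\varphi_n(1_A)^{1/2}\bigr)$, so that $\psi_n\circ\varphi_n(a)=\tilde\psi_n\tilde\varphi_n(a)+\rho_n(\varphi_n(a))\bigl(1_B-\tilde\psi_n\tilde\varphi_n(1_A)\bigr)$, and the error term dies ultraweakly precisely under the tacit unitality $\theta(1_A)=1_B$ --- a hypothesis you correctly flagged, and your SOT argument for killing a correction term is itself sound; it just operates on maps your construction does not actually provide.
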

\begin{proof}
Let $\tilde\varphi_n: A\to\M$ and $\tilde\psi_n:\M\to B$ be  c.c.p. admissible maps with $\tilde\psi_n\circ \tilde\varphi_n\to \theta$ in the point-ultraweak topology. By Lemma \ref{in}, there are u.c.p. admissible maps $\varphi_n: A\to \M$ such that
$$\tilde\varphi_n(a)=\tilde\varphi_n(1_A)^{\frac{1}{2}}\varphi_n(a)\tilde\varphi_n(1_A)^{\frac{1}{2}},$$
for $a\in A$. Let $\rho_n:\M\to\mathfrak A$ be a c.p. module map, then  the map defined for $x\in\M$ by
$$\psi_n(x)=\big(1_B-\tilde\psi_n(\tilde\varphi_n(1_A))\big)\cdot\rho_n(x)+\tilde\psi_n\big(\tilde\varphi_n(1_A)^{\frac{1}{2}}x\tilde\varphi_n(1_A)^{\frac{1}{2}}\big),$$
is a u.c.p. admissible map, and  $\psi_n\circ \varphi_n$ converges to  $\theta$ in point-ultraweak topology.
\end{proof}

\begin{lemma} \label{wnucl}
Let $\mathfrak A$ be a unital $C^*$-algebra, $A$ be a unital $C^*$-algebra and a neo-unital right $\mathfrak A$-module, $B$ be a von Neumann algebra and a right $\mathfrak A$-module, and $E$ be a unital $C^*$-algebra and a neo-unital right $\mathfrak A$-module, all  with compatible actions. If $\theta: A\to B$ is a module map which could be approximated in point-ultraweak topology by $E$-factorable maps, then $\theta$ is $E$-weakly nuclear.
\end{lemma}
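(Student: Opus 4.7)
The plan is to show that $\theta$ lies in the point-ultraweak closure of the class $\mathcal C$ of c.c.p.\ $E$-factorable module maps $A\to B$, which is by definition exactly what $E$-weak nuclearity means. Hence the task is to pass from ``approximated by $E$-decomposable maps'' (i.e.\ by linear combinations of c.p.\ $E$-factorable module maps) to ``approximated by c.c.p.\ $E$-factorable module maps.'' First I would unpack the hypothesis: each approximant $\theta_\lambda\to\theta$ is a finite sum $\theta_\lambda=\sum_j c_{\lambda,j}(\psi_{\lambda,j}\circ\varphi_{\lambda,j})$ with $\varphi_{\lambda,j}:A\to\mathbb M_{k(\lambda,j)}(E)$ and $\psi_{\lambda,j}:\mathbb M_{k(\lambda,j)}(E)\to B$ being c.p.\ module maps, and I would amalgamate the pieces along block-diagonal embeddings $\mathbb M_{k(\lambda,j)}(E)\hookrightarrow\mathbb M_{N(\lambda)}(E)$ so that the combinatorial data is controlled by a single c.p.\ factorable module map through one matrix algebra over $E$, with the coefficients $c_{\lambda,j}$ absorbed into a fixed ``dispatching'' element on the $\psi$-side.

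Next I would appeal to Lemma \ref{convex}: since $\mathcal C$ is convex, its point-norm and point-weak closures in $\mathbb B_{\A}(A,B)$ coincide, and on bounded sets these agree with the point-ultraweak closure; hence it suffices to show $\theta$ lies in the point-weak closure of $\mathcal C$. To convert the possibly signed or complex linear combinations $\theta_\lambda$ into genuine c.c.p.\ factorable module maps, I would apply the unitization--compression technique of Lemma \ref{injc}: the assumptions that $B$ is a von Neumann algebra on which $\A$ acts centrally and that $A$ and $E$ are unital permit replacing each c.p.\ piece by a u.c.p.\ module map via compression by $\tilde\varphi_\lambda(1_A)^{1/2}$ together with a state-correction on the matrix algebra, producing c.c.p.\ $E$-factorable module maps that approach $\theta$ ultraweakly along the original net.

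The main obstacle is the sign/complex-coefficient step: arbitrary linear combinations of c.p.\ maps are not c.p., and one has to recover complete positivity without breaking module-equivariance. Centrality of the $\A$-action on $B$ is precisely the hypothesis that lets the scalar manipulations commute past the module structure, so the compression trick yields c.c.p.\ \emph{module} maps rather than merely c.c.p.\ maps; once that is in place, the convexity argument of Lemma \ref{convex} closes out the proof.
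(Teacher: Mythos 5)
Your reading of the hypothesis is the first, and fatal, problem. ``$E$-decomposable'' in Lemma \ref{wnucl} is the paper's (admittedly inconsistent) name for what it earlier defines as $E$-\emph{factorable}: a single c.p.\ composition $\psi\circ\varphi$ through some $\mathbb M_n(E)$. This is exactly how the paper's proof opens: ``Take c.p.\ module maps $\varphi_n\colon A\to\mathbb M_{k(n)}(E)$ and $\psi_n\colon\mathbb M_{k(n)}(E)\to B$ such that $\psi_n\circ\varphi_n$ approximates $\theta$.'' Under your reading --- finite signed or complex combinations $\sum_j c_{\lambda,j}\,\psi_{\lambda,j}\circ\varphi_{\lambda,j}$ --- the lemma is simply false: $\theta=-\psi\circ\varphi$ would be decomposable, yet any point-ultraweak limit of c.c.p.\ maps is completely positive, so such a $\theta$ cannot be $E$-weakly nuclear. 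Your proposed repair cannot close this: the compression/state-correction device of Lemma \ref{injc} takes maps that are already c.p.\ (in fact c.c.p., since positivity of the correction term $\rho_n(x)\bigl(1_B-\tilde\psi_n(\tilde\varphi_n(1_A))\bigr)$ requires $\tilde\psi_n(\tilde\varphi_n(1_A))\le 1_B$) and merely normalizes them; neither it nor centrality of the $\mathfrak A$-action can manufacture complete positivity out of a non-c.p.\ map. So the ``main obstacle'' you name is a genuine obstruction to your reading, not a technicality the compression trick absorbs. (Your block-diagonal amalgamation is fine for positive, i.e.\ conic, combinations --- but then the coefficients are harmless anyway.)

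Even under the correct reading, your outline is missing the actual normalization mechanism. Starting from merely c.p.\ factorable approximants there is no a priori bound on $\psi_n(\varphi_n(1_A))$, so the hypotheses of Lemma \ref{injc} are not met. The paper's route is: use convexity not to pass between closures --- and note your claim that point-weak and point-ultraweak closures of bounded convex sets agree is false (the point-ultraweak closure of the finite-rank contractions in $\mathbb B(\ell^2)$ is all contractions, while the point-norm $=$ point-weak closure consists of compacts); fortunately no such upgrade is needed, since $E$-weak nuclearity is itself a point-ultraweak statement --- but to upgrade point-ultraweak to point-SOT convergence via convex combinations \cite[3.8.1]{bo}. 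Then one makes $\varphi_n$ u.c.p.\ by \cite[Lemma 2.3]{am}, observes $b_n:=\psi_n(1)\to 1_B$ in SOT, and cuts down by the spectral projections $p_n=\chi_{[0,1+\delta]}(b_n)$, so that $\psi_n'=p_n\psi_n(\cdot)p_n$ has norm at most $1+\delta$ and $\psi_n'\circ\varphi_n$ still converges to $\theta$ point-SOT, hence point-ultraweakly. This spectral cut-down is the heart of the proof and is absent from your sketch; it is also exactly where centrality of the module action on $B$ is used --- it makes $p_n\psi_n(\cdot)p_n$ again a module map --- rather than, as you suggest, in letting scalar manipulations pass through the compression step.
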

\begin{proof}
Take c.p. admissible maps $\varphi_n: A\to\M$ and $\psi_n: \M\to B$ such that $\psi_n\circ\varphi_n$ approximates $\theta$ in point-ultraweak topology. By \cite[3.8.1]{bo}, we may assume that the above sequence  approximates $\theta$ in point-SOT. By Lemma \ref{in}, we may assume that $\varphi_n$ is a u.c.p. admissible map. Then $b_n:=\psi_n(1_A)\to 1_B$ in SOT. For $\delta>0$, take $p_n:=\chi_{[0, 1+\delta]}(b_n)\in B$ and observe that $p_n(b_n-1_B)\to 0$ in SOT, and so $1-p_n\leq \frac{1}{\delta}|1-b_n|\to 0$ in SOT. Hence $p_n\psi_n\circ\varphi_n(\cdot)\to\theta$ in SOT. Take $\psi_n^{'}=p_n\psi_n(\cdot)p_n$, which is an admissible map. Then $\|\psi_n^{'}\|\leq 1+\delta$ and we only need to show that $\psi_n^{'}\circ\varphi_n$ approximates $\theta$ in point-SOT, but this could be done exactly as in the proof of \cite[3.8.2]{bo}.  
\end{proof}

In the next theorem, we give $A^{**}$ the canonical right module structure via,
$$\langle\alpha\cdot x^{*}, x\rangle=\langle x^{*},  x\cdot\alpha\rangle, \ \  \langle   x^{**}\cdot\alpha, x^{*}\rangle=\langle x^{**}, \alpha\cdot x^*\rangle,$$
for $\alpha\in\mathfrak A, x\in A, x^*\in A^*,$ and $x^{**}\in A^{**}.$

\begin{theorem}\label{sd}
Let $\mathfrak A$ be a von Neumann algebra and  $A$ be a unital $C^*$-algebra and a neo-unital left $\mathfrak A$-module with compatible actions. If $A^{**}$ is $\mathfrak A$-semidiscrete then $A$ is $\mathfrak A$-nuclear.
\end{theorem}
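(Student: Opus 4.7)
My plan is to mimic the classical Brown--Ozawa argument \cite[Proposition 3.6.4]{bo} in the module setting, where the three ingredients are: restriction of the biduality approximation to $A$, perturbation of the second leg from range $A^{**}$ to range $A$, and passage from point-weak to point-norm via convexity. By $\mathfrak{A}$-semidiscreteness of $A^{**}$, there are c.c.p.\ module maps $\Phi_n\colon A^{**}\to\MM$ and $\Psi_n\colon \MM\to A^{**}$ with $\Psi_n\circ\Phi_n\to \mathrm{id}_{A^{**}}$ in the point-ultraweak topology. Restricting along the canonical inclusion $A\hookrightarrow A^{**}$ gives c.c.p.\ module maps $\Phi_n|_A\colon A\to\MM$, and $\Psi_n\circ\Phi_n|_A\colon A\to A^{**}$ then converges to the inclusion $\iota\colon A\hookrightarrow A^{**}$ in the weak topology $\sigma(A^{**},A^{*})$, simply because the ultraweak topology on $A^{**}$ restricts to that weak topology on $A$. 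So $\iota$ is a point-weak limit of c.c.p.\ module maps factoring through $\MM$.

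To upgrade these into maps with range in $A$, I would apply a module version of the ``Brown--Ozawa trick'' \cite[Section 3.6]{bo} announced in the introduction as one of the results of Section 3. A c.c.p.\ module map $\Psi\colon \MM\to A^{**}$ is recorded by the finitely many elements $\Psi(e_{ij})\in A^{**}$, where $\{e_{ij}\}$ are the matrix units of $\mathbb{M}_{k(n)}\subseteq \MM$ (the module property then determines $\Psi$ on general elements $\sum_{ij} e_{ij}\cdot \alpha_{ij}$). Goldstine's theorem lets one approximate these $k(n)^{2}$ elements weakly by bounded elements of $A$, and a Kaplansky/Arveson-style argument in the spirit of \cite[Lemma 2.3]{am} is used to produce c.c.p.\ module maps $\widetilde\Psi_n\colon \MM\to A$ such that $\widetilde\Psi_n\circ\Phi_n|_A\to \mathrm{id}_A$ in the weak topology on $A$, while preserving complete positivity, contractivity, and the $\mathfrak{A}$-module property.

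Having arrived at a point-weak approximation of $\mathrm{id}_A$ by c.c.p.\ module maps $A\to A$ that are $\mathfrak{A}$-factorable through some $\MM$, I invoke Lemma \ref{convex}: the set of such maps is convex (the direct sum of factorizations through $\mathbb{M}_{k_{1}}(\mathfrak{A})$ and $\mathbb{M}_{k_{2}}(\mathfrak{A})$ is a factorization through $\mathbb{M}_{k_{1}+k_{2}}(\mathfrak{A})$), and by part (i) its point-weak closure coincides with its point-norm closure. Taking suitable convex combinations therefore upgrades the approximation to point-norm, which is precisely $\mathfrak{A}$-nuclearity of $A$.

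The main obstacle is the middle step: in the classical setting the target matrix algebra is finite-dimensional, so that any c.c.p.\ map $\mathbb{M}_{n}\to A^{**}$ is automatically weak$^{*}$-approximable by c.c.p.\ maps into $A$. Here $\MM$ is generally infinite-dimensional, so one has to exploit the module structure to reduce the positivity and contractivity constraints to a finite list of matrix inequalities on the elements $\Psi(e_{ij})\in A^{**}$, and then apply Goldstine compatibly with these constraints. Everything else is either restriction, composition, or the convexity/closure dichotomy already recorded in Lemmas \ref{nuc} and \ref{convex}.
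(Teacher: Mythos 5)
Your proposal follows essentially the same route as the paper's own proof: restrict the bidual approximation to $A$, replace the second leg $\psi'\colon \mathbb M_{k(n)}(\mathfrak A)\to A^{**}$ by maps into $A$ via the correspondence between c.p.\ module maps and positive matrices over $A^{**}$ together with ultraweak density of $\mathbb M_n(A)_+$ in $\mathbb M_n(A^{**})_+$ (the paper does this by citing \cite[Lemma 3.2$(ii)$]{am}, which is exactly the matrix-unit/Goldstine device you flag as the ``main obstacle''), and then pass from point-weak to point-norm by the convexity Lemma \ref{convex}. The only details the paper supplies that you gloss over are the quantitative restoration of contractivity after the weak approximation (handled there by the u.c.p.\ upgrade of Lemma \ref{injc} and rescaling by $\psi''(1_{\mathbb M_n(\mathfrak A)})$, a convex combination making $\|1_A-\psi''(1)\|<\varepsilon$) and the non-unital case (unitization $A\oplus\mathfrak A$ with $B^{**}\cong A^{**}\oplus\mathfrak A^{**}$), both routine.
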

\begin{proof}
By Lemma \ref{convex}, we need to show that, given $\varepsilon>0$ and finite subsets $\mathfrak F\subseteq A$ and $\mathfrak S\subseteq A^*$, there is a positive integer $n$ and c.c.p. module maps $\varphi: A\to\mathbb M_n(\mathfrak A)$ and  $\psi: \mathbb M_n(\mathfrak A)\to A$ with
$$|\phi(\psi\circ\varphi(a))-\phi(a)|<\varepsilon,\ \ (a\in \mathfrak F, \phi\in\mathfrak S).$$
By Lemma \ref{injc}, One could find u.c.p. admissible maps $\varphi^{'}: A^{**}\to\mathbb M_n(\mathfrak A)$ and  $\psi^{'}: \mathbb M_n(\mathfrak A)\to A^{**}$ with
$$|\phi(\psi^{'}\circ\varphi^{'}(a))-\phi(a)|<\varepsilon,\ \ (a\in \mathfrak F, \phi\in\mathfrak S).$$
By \cite[Lemma 3]{am}, one can associate to $\psi^{'}$  some $\hat\psi^{'}\in \mathbb M_n(A^{**})_{+}$, in which $\mathbb M_n(A)_{+}$ is ultraweakly dense, and so there is a net of c.p. admissible maps $\psi_\lambda: \mathbb M_n(\mathfrak A)\to A$ with $\psi_\lambda\to \psi^{'}$ in point-ultraweak topology. Since $\psi_\lambda(1_{\mathbb M_n(\mathfrak A)})\to 1_A$ in the weak topology of $A$, replacing $\psi^{'}$ with an appropriate convex combination, we get a c.p. admissible map $\psi^{''}: \mathbb M_n(\mathfrak A)\to A$ with
$$\|1_A-\psi^{''}(1_{\mathbb M_n(\mathfrak A)})\|<\varepsilon, \ |\phi(\psi\circ\varphi(a))-\phi(a)|<\varepsilon,\ \ (a\in \mathfrak F, \phi\in\mathfrak S).$$
Put $a=\psi^{''}(1_{\mathbb M_n(\mathfrak A)})$, then we have the claimed inequality for the restriction $\varphi$ of $\varphi^{'}$ to $A$ and $\psi=\frac{1}{\|a\|}\psi^{''}$.
\end{proof}

\begin{proposition} \label{inj}
Let $\mathfrak A$ be an injective $C^*$-algebra.

$(i)$ If $A$ is $\mathfrak A^{**}$-nuclear, then $A$ is $\mathfrak A$-nuclear.

$(ii)$ If  $A^{**}$ is $\mathfrak A^{**}$-semidiscrete then $A$ is $\mathfrak A$-nuclear.
\end{proposition}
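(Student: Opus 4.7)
My plan for part (i) is to exploit the injectivity of $\mathfrak A$ to build a c.c.p. module retraction of the canonical inclusion $\mathfrak A \hookrightarrow \mathfrak A^{**}$. Since a unital injective $C^*$-algebra is a von Neumann algebra, there is a central projection $z\in\mathfrak A^{**}$ with $\mathfrak A\cong z\mathfrak A^{**}$; the normal $*$-homomorphism $E\colon\mathfrak A^{**}\to\mathfrak A$, $x\mapsto xz$, is then a c.c.p. module map that restricts to the identity on $\mathfrak A$. Applied entrywise this yields a c.c.p. module retraction $E_n\colon\mathbb M_{k(n)}(\mathfrak A^{**})\to\mathbb M_{k(n)}(\mathfrak A)$. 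Given c.c.p. module maps $\varphi_n\colon A\to\mathbb M_{k(n)}(\mathfrak A^{**})$ and $\psi_n\colon\mathbb M_{k(n)}(\mathfrak A^{**})\to A$ witnessing the $\mathfrak A^{**}$-nuclearity of $A$, the natural candidates are $\tilde\varphi_n := E_n\circ\varphi_n$ and $\tilde\psi_n := \psi_n|_{\mathbb M_{k(n)}(\mathfrak A)}$, which are c.c.p. module maps into and out of $\mathbb M_{k(n)}(\mathfrak A)$ respectively.

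The crux of part (i), and the main obstacle, is to verify that $\tilde\psi_n\circ\tilde\varphi_n\to\mathrm{id}_A$ in point-norm, i.e.\ that $\psi_n\bigl((1-z\otimes 1_{k(n)})\varphi_n(a)\bigr)\to 0$ for each $a\in A$. To tackle this I would combine two ingredients: first, the observation that the $\mathfrak A$-action on $A$ factors through $z\mathfrak A^{**}=\mathfrak A$, so the module property of $\psi_n$ constrains the value of $\psi_n$ on the orthogonal summand $(1-z)\mathfrak A^{**}$-part of its argument (passing if necessary to the Arens-extended action on $A^{**}$ and exploiting normality); and second, Lemma~\ref{convex}, which lets me move between point-norm and point-weak topologies and replace the candidates by convex combinations without losing the factorization structure, in the spirit of the final step of the proof of Theorem~\ref{sd}.

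For part (ii), the plan is cleaner and reduces to a two-step chain. Because $\mathfrak A$ is injective, hence nuclear, its enveloping von Neumann algebra $\mathfrak A^{**}$ is itself an injective von Neumann algebra. The proof of Theorem~\ref{sd} uses the hypothesis on the coefficient algebra only via Lemma~\ref{injc}, which needs just injectivity; consequently that proof goes through verbatim with $\mathfrak A$ replaced everywhere by $\mathfrak A^{**}$, yielding that $\mathfrak A^{**}$-semidiscreteness of $A^{**}$ implies $\mathfrak A^{**}$-nuclearity of $A$. Applying part (i) then gives $\mathfrak A$-nuclearity of $A$, as required. Thus the entire burden of the proposition rests on the verification step flagged above in part (i).
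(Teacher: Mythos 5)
Your architecture for part (i) --- compress the $\mathfrak A^{**}$-valued coefficients back into $\mathfrak A$ by a retraction and restrict the maps $\psi_n$ --- matches the paper's, but your realization of the retraction rests on a false claim: a unital injective $C^*$-algebra need \emph{not} be a von Neumann algebra. Injective $C^*$-algebras are monotone complete (AW$^*$-algebras), but need not be dual spaces; the classical example of Dixmier, the algebra of bounded Baire functions on $[0,1]$ modulo functions supported on meager sets, is commutative and injective (its spectrum is extremally disconnected) but not hyperstonean, hence not a von Neumann algebra. Consequently there is in general no central projection $z\in\mathfrak A^{**}$ with $\mathfrak A\cong z\mathfrak A^{**}$, and no normal multiplicative retraction $x\mapsto xz$. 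What injectivity actually provides --- and what the paper uses --- is a c.c.p. extension $\mathbb E\colon\mathfrak A^{**}\to\mathfrak A$ of $\mathrm{id}_{\mathfrak A}$ along the inclusion $\mathfrak A\subseteq\mathfrak A^{**}$, which is a conditional expectation by Tomiyama's theorem and hence an $\mathfrak A$-module map; the paper then transposes the approximating maps via the identification of $CP_{\mathfrak A}(A,\mathbb M_{k(n)}(\mathfrak A^{**}))$ with $CP_{\mathfrak A}(\mathbb M_{k(n)}(A),\mathfrak A^{**})$ (a modification of \cite[Lemma 3.6]{am}) to form $\varphi_n=\mathbb E\circ\tilde\varphi_n$ and $\psi_n=\tilde\psi_n|_{\mathbb M_{k(n)}(\mathfrak A)}$. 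Moreover, you explicitly leave the convergence $\psi_n\circ\varphi_n\to\mathrm{id}_A$ unproved, and this is a genuine hole: with your multiplicative cut-down nothing forces $\psi_n\bigl((1-z\otimes 1_{k(n)})\varphi_n(a)\bigr)\to 0$, and neither of your two proposed ingredients (the action factoring through $z\mathfrak A^{**}$, Lemma \ref{convex}) supplies the missing estimate. So part (i) of your proposal is both incorrectly founded and incomplete at its acknowledged crux.

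Part (ii) fails at two independent points. First, ``injective, hence nuclear'' is false: $\mathbb B(H)$ is injective but not nuclear, and since a $C^*$-algebra is nuclear exactly when its double dual is semidiscrete (injective), $\mathfrak A^{**}$ need \emph{not} be an injective von Neumann algebra when $\mathfrak A$ is merely injective; thus Lemma \ref{injc} cannot be invoked with $E=\mathfrak A^{**}$. Second, running Theorem \ref{sd} ``verbatim with $\mathfrak A$ replaced by $\mathfrak A^{**}$'' and then applying part (i) is precisely the shortcut the paper's own proof of (ii) disclaims: it notes that (ii) ``does not follow from part $(i)$ and the above theorem (since $A$ and $A^{**}$ are not $\mathfrak A^{**}$-modules).'' The hypothesis of the proposition provides only $\mathfrak A$-module structures and $\mathfrak A$-module maps, whereas Theorem \ref{sd} over the coefficient algebra $\mathfrak A^{**}$ would require $\mathfrak A^{**}$-module data throughout. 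The paper instead reworks the proof of Theorem \ref{sd} directly, inserting the conditional expectation $\mathbb E$ wherever $\mathbb M_n(\mathfrak A^{**})$-valued maps arise so as to land in $\mathbb M_n(\mathfrak A)$, and never passes through ``$A$ is $\mathfrak A^{**}$-nuclear'' as an intermediate statement. To repair your argument you should replace the central-projection device by the Tomiyama expectation in (i), prove the flagged convergence there, and in (ii) abandon the two-step reduction in favor of rerunning the semidiscreteness argument with $\mathbb E$ inserted.
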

\begin{proof}
$(i)$ Let $\tilde\varphi_n: A\to\MMM$ and $\tilde\psi_n:\MMM\to A$ be  c.c.p. admissible maps with $\tilde\psi_n\circ \tilde\varphi_n\to id_A$ in the point-norm topology. By a modification of \cite[Lemma 4]{am}, we may write $\tilde\varphi_n\in CP_{\mathfrak A}(\mathbb M_{k_n}(A), \mathfrak A^{**})$ and $\tilde\psi_n\in CP_{\mathfrak A}(\mathfrak A^{**}, \mathbb M_{k_n}(A))$. Let $\psi_n$ be the restriction of $\tilde\psi_n$ to $\mathfrak A$. By (a one-sided version of) \cite[Theorem 3.2]{fp}, $\mathfrak A$ is injective as an operator $\mathfrak A$-module. Let $\mathbb E:\mathfrak A^{**}\to \mathfrak A$ be an $\mathfrak A$-conditional expectation, which exist by $\mathfrak A$-injectivity (cf., \cite[Definition 1]{fp}), and put $\varphi_n=\mathbb E\circ\tilde\varphi_n$, then $\varphi_n: A\to\MM$ and $\psi_n:\MM\to A$ are c.c.p. admissible maps with $\psi_n\circ \varphi_n\to id_A$ in the point-norm topology.

$(ii)$ This does not follow from part $(i)$ and the above theorem (since $A$ and $A^{**}$ are not $\mathfrak A^{**}$-modules). However, using the idea of a conditional expectation we may rewrite the proof of Theorem \ref{sd} to prove $\mathfrak A$-nuclearity of $A$.
\end{proof}

\begin{remark} In part $(ii)$ of the above proposition, we could conclude that $A$ is $\mathfrak A$-nuclear when the canonical inclusion $\iota: A\hookrightarrow A^{**}$ is $\mathfrak A$-weakly nuclear (with almost the same proof.)
\end{remark}

We prove the converses of Theorem \ref{sd} and Proposition \ref{inj}$(ii)$ in a subsequent section. Next let us give some classes of examples of nuclear and semidiscrete $\mathfrak A$-modules.

We say that $A$ is {\it locally} $E$-{\it nuclear} if for each finite subset $\mathfrak F\subseteq A$ and $\varepsilon>0$, there is an $E$-nuclear $C^*$-subalgebra and  submodule $B\subseteq A$ which contains $\mathfrak F$ within $\varepsilon$ in norm.

\begin{lemma} \label{lnuc}
If $E$ is $\mathfrak A$-injective,  local $E$-nuclearity is equivalent to $E$-nuclearity.
\end{lemma}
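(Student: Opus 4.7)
The plan is to mirror the classical argument, where the essential ingredient is that injectivity of the codomain lets one extend a c.c.p. map from a subalgebra to all of $A$. One direction is trivial: $E$-nuclearity is a special case of local $E$-nuclearity (take $B=A$ in the local definition). For the nontrivial direction, I would work by combining Lemma \ref{convex}$(i)$ (point-norm and point-weak closures of convex sets of module maps agree) with a single-approximation argument, so it suffices to show: given a finite $\mathfrak F\subseteq A$ and $\varepsilon>0$, there exist c.c.p. module maps $\varphi:A\to\mathbb M_n(E)$ and $\psi:\mathbb M_n(E)\to A$ with $\|\psi\circ\varphi(a)-a\|<3\varepsilon$ for all $a\in\mathfrak F$.

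First I would use local $E$-nuclearity to choose an $E$-nuclear $C^*$-submodule $B\subseteq A$ together with $\mathfrak F'=\{a':a\in\mathfrak F\}\subseteq B$ with $\|a-a'\|<\varepsilon$. Applying $E$-nuclearity of $B$, I obtain c.c.p. module maps $\varphi_0:B\to \mathbb M_n(E)$ and $\psi_0:\mathbb M_n(E)\to B$ with $\|\psi_0\circ\varphi_0(a')-a'\|<\varepsilon$ for each $a'\in\mathfrak F'$. The key step is then to invoke a module version of Arveson's extension theorem (as in \cite{w2,am}) to extend the module map $\varphi_0$ to a c.c.p. module map $\tilde\varphi:A\to\mathbb M_n(E)$. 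For this extension to be possible, $\mathbb M_n(E)$ must be injective in the category of operator $\mathfrak A$-modules; this is where the hypothesis that $E$ is $\mathfrak A$-injective is used, together with the fact that injectivity passes from $E$ to $\mathbb M_n(E)$ (because amplification to matrix algebras respects $\mathfrak A$-module structure coordinatewise, and a completely positive module extension into $E$ entrywise assembles into one into $\mathbb M_n(E)$).

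With $\tilde\varphi$ in hand I would set $\psi$ to be $\psi_0$ followed by the inclusion $B\hookrightarrow A$ (still c.c.p. and a module map), and then estimate
\begin{equation*}
\|\psi\circ\tilde\varphi(a)-a\|\le \|\psi\circ\tilde\varphi(a)-\psi\circ\tilde\varphi(a')\|+\|\psi_0\circ\varphi_0(a')-a'\|+\|a'-a\|<3\varepsilon,
\end{equation*}
using that $\tilde\varphi|_B=\varphi_0$ and that $\psi,\tilde\varphi$ are contractive.

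The step I expect to be the main obstacle is justifying the module Arveson extension together with the inheritance of $\mathfrak A$-injectivity by $\mathbb M_n(E)$; the relevant tools should be available from \cite{w2} and the machinery developed in \cite{am}, but one has to check carefully that the module action used on $\mathbb M_n(E)$ (the diagonal/entrywise one) is the action with respect to which $\mathbb M_n(E)$ inherits injectivity. The rest is an assembly of standard estimates plus Lemma \ref{convex}$(i)$ to upgrade the finite-set approximations to the genuine point-norm approximation required by the definition of $E$-nuclearity.
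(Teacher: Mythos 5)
Your proposal is correct and follows essentially the same route as the paper: choose an $E$-nuclear submodule $B$ approximating $\mathfrak F$, factor the identity of $B$ through $\mathbb M_n(E)$, regard $\psi$ as a map into $A$, and extend $\varphi$ to all of $A$ via the module Arveson extension theorem using injectivity of $\mathbb M_n(E)$ (the paper cites \cite[Corollary 3.9]{am} for precisely the inheritance of $\mathfrak A$-injectivity by $\mathbb M_n(E)$ that you flag as the main obstacle), ending with the same $3\varepsilon$ estimate. The only cosmetic difference is that the paper states the approximation against finite sets of functionals $\mathfrak S\subseteq A^*$ (leaning on Lemma \ref{convex}$(i)$), whereas your direct point-norm triangle-inequality estimate is equally valid, since $E$-nuclearity of $B$ already gives point-norm approximation.
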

\begin{proof}
If $A$ is locally $E$-nuclear, given finite subsets $\mathfrak F\subseteq A$ and $\mathfrak S\subseteq A^*$,  $\varepsilon>0$, and $B$ as above, let  $\mathfrak G\subseteq B$ and $\mathfrak T\subseteq B^*$ be within  $\varepsilon$ in norm. Then there is a positive integer $n$ and c.c.p. admissible maps $\varphi: B\to \mathbb M_n(E)$ and $\psi: \mathbb M_n(E)\to B$ with
$$|\eta(\psi\circ\varphi(b))-\eta(b)|<\varepsilon,\ \ (b\in \mathfrak G, \eta\in\mathfrak T).$$
Consider $\psi$ as a map into $A$ and extend $\varphi $ to a c.c.p. admissible map $\tilde\varphi$ on $A$ using the injectivity of $\mathbb M_n(E)$ \cite[Corollary 3.9]{am}. Then
$$|\xi(\psi\circ\tilde\varphi(a))-\xi(a)|<3\varepsilon,\ \ (a\in \mathfrak F, \xi\in\mathfrak S).$$
\end{proof}
If there is an inductive system of algebras of the form
$$\mathbb M_{n_1}(\mathfrak A)\oplus\cdots\mathbb M_{n_k}(\mathfrak A)$$
with injective connecting module maps, the corresponding inductive limit (with its canonical $\mathfrak A$-module structure) is called an $\mathfrak A$-AF-algebra. When $\mathfrak A$ is an injective $C^*$-algebra (and so an injective $\mathfrak A$-module), it follows from the above lemma (and the fact that $\mathbb M_n(\mathfrak A)$ is $\mathfrak A$-nuclear) that $\mathfrak A$-AF-algebras are $\mathfrak A$-nuclear. As the second example, consider a nuclear $C^*$-algebra $B$ and let $\mathfrak A$ act on $A=\mathfrak A\otimes B$ from right by $(\beta\otimes b)\cdot\alpha=\beta\alpha\otimes b$, then it is easy to see that $A$ is $\mathfrak A$-nuclear. As a more concrete example, in the next section we shall examine the nuclearity of $C^*$-algebras of inverse semigroups as modules on the $C^*$-algebra of their subsemigroup of idempotents.

When $\mathfrak A$ is a von Neumann algebra and $B$ is a semidiscrete von Neumann algebra (in particular, if $B$ is of Type I)  then $A=\mathfrak A\bar\otimes B$ is $\mathfrak A$-semidiscrete. As another classes of examples, let us show that certain ``Type I'' modules are $\mathfrak A$-semidiscrete.

\begin{lemma} \label{sot}
If $X$ is  a left Hilbert $\mathfrak A$-module and $A$ is a von Neumann algebra and a $\mathfrak A$-module with compatible action, and there is a faithful representation $A\subseteq \mathbb B(X)$, preserving the module actions, and there is a net of projections $p_i\in A$ such that $p_i\to 1_X$ in the strong operator topology and each corner $p_iAp_i$ is  $\mathfrak A$-semidiscrete, then so is $A$.
\end{lemma}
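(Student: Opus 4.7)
The plan is to compose the compressions $\Phi_i\colon A\to p_iAp_i$, $\Phi_i(a)=p_iap_i$, with the approximations supplied by $\mathfrak A$-semidiscreteness of each corner, and then show the resulting maps approximate $\mathrm{id}_A$ in point-ultraweak topology.

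First, I observe that each $\Phi_i$ is a c.c.p. module map (complete positivity and contractivity are standard for compression by a projection of norm $\le 1$, and the module property is the natural compatibility implicit in the hypothesis that $p_iAp_i$ is an $\mathfrak A$-submodule). Since $p_i\to 1_H$ in SOT and $\|p_iap_i\|\le\|a\|$, one has $p_iap_i\to a$ in SOT; on bounded sets SOT-convergence implies ultraweak convergence, so $\Phi_i\to \mathrm{id}_A$ point-ultraweakly. Next, for each $i$ the $\mathfrak A$-semidiscreteness of $p_iAp_i$ yields c.c.p. module maps $\varphi_j^{(i)}\colon p_iAp_i\to\mathbb M_{k(i,j)}(\mathfrak A)$ and $\psi_j^{(i)}\colon\mathbb M_{k(i,j)}(\mathfrak A)\to p_iAp_i$ with $\psi_j^{(i)}\circ\varphi_j^{(i)}\to \mathrm{id}_{p_iAp_i}$ in point-ultraweak topology.

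I then assemble these: $\varphi_j^{(i)}\circ\Phi_i$ and $\iota_i\circ\psi_j^{(i)}$, where $\iota_i\colon p_iAp_i\hookrightarrow A$ is the inclusion (itself a c.c.p. module map), are c.c.p. module maps of the form required by the definition of weak nuclearity. Given $\varepsilon>0$ and finite subsets $\mathfrak F\subseteq A$ and $\mathfrak S\subseteq A_{*}$, choose $i$ so that $|\omega(p_iap_i)-\omega(a)|<\varepsilon/2$ throughout $\mathfrak F\times\mathfrak S$, and then, using ultraweak continuity of the restrictions $\omega|_{p_iAp_i}$, choose $j$ so that $|\omega(\psi_j^{(i)}\varphi_j^{(i)}(p_iap_i))-\omega(p_iap_i)|<\varepsilon/2$ on the same finite set. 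A triangle-inequality estimate gives $|\omega(\iota_i\psi_j^{(i)}\varphi_j^{(i)}\Phi_i(a))-\omega(a)|<\varepsilon$, establishing that $\mathrm{id}_A$ is $\mathfrak A$-weakly nuclear, i.e.\ that $A$ is $\mathfrak A$-semidiscrete.

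The main subtlety is verifying that compressions by $p_i$ respect the $\mathfrak A$-module structure; this parallels the analogous point in the proposition preceding Definition \ref{exa} and is effectively built into the hypothesis that each corner $p_iAp_i$ is itself an $\mathfrak A$-submodule with the induced action. Aside from that, the argument is a standard reduction from approximation on compressions to approximation on the whole algebra, essentially mirroring how strong-operator approximate units are exploited in the classical semidiscreteness proofs.
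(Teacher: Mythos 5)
Your proof is correct and takes essentially the same route as the paper's: compress by $p_i$, invoke $\mathfrak A$-semidiscreteness of the corner $p_iAp_i$ to get the c.c.p.\ module maps through $\mathbb M_n(\mathfrak A)$, and reassemble with a triangle-inequality estimate over the finite sets $\mathfrak F\subseteq A$ and $\mathfrak S\subseteq A_{*}$. The only cosmetic difference is that the paper fixes one projection $p$ with $\eta(1-p)<\varepsilon/4$ for all $\eta\in\mathfrak S$ rather than citing point-ultraweak convergence of the compressions $a\mapsto p_iap_i$, and, like you, it treats the module-map property of the compression as built into the hypothesis that each corner is an $\mathfrak A$-submodule.
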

\begin{proof}
Given finite subsets $\mathfrak F\subseteq A$ and $\mathfrak S\subseteq A_{*}$ and $\varepsilon>0$, we need to find a positive integer $n$ and c.c.p. admissible maps $\varphi: A\to \mathbb M_n(\mathfrak A)$ and $\psi: \mathbb M_n(\mathfrak A)\to A$ with
$$|\eta(\psi\circ\varphi(a))-\eta(a)|<\varepsilon,\ \ (a\in \mathfrak F, \eta\in\mathfrak S).$$
Choose a projection $p\in M$ such that $pMp$ is $\mathfrak A$-semidiscrete and $\eta(1-p)<\varepsilon/4$, for each $\eta\in\mathfrak S$. By assumption, there are  c.c.p. admissible maps $\tilde\varphi: pAp\to \mathbb M_n(\mathfrak A)$ and $\psi: \mathbb M_n(\mathfrak A)\to A$ with
$$|\eta(\psi\circ\tilde\varphi(pap))-\eta(pap)|<\varepsilon/4,\ \ (a\in \mathfrak F, \eta\in\mathfrak S).$$
The map $\varphi(a):=\tilde\varphi(pap)$ along with $\psi$ have the required property.
\end{proof}

In the above situation, if $X=H\otimes \mathfrak A$, for some Hilbert space $H$, then by Lemma \ref{fr}, there is a net $\{q_i\}$ of finite rank projections in $\mathbb B(H\otimes \mathfrak A)$, converging to the identity in the strong operator topology and each corner $q_i\mathbb B(H\otimes\mathfrak A)q_i=\mathbb M_{n_i}(\mathfrak A)$, which is always $\mathfrak A$-semidiscrete.  Hence by the above lemma, $\mathbb B(H\otimes \mathfrak A)$ is $\mathfrak A$-semidiscrete. A little effort then shows that ``type I'' modules (with trivial action on the first leg) of the form $$\prod_{i\in I} B_i\bar\otimes \mathbb B(H_i\otimes \mathfrak A)$$
are $\mathfrak A$-semidiscrete where $B_i$ is an abelian von Neumann algebra and $H_i$ is a Hilbert space of dimension $i$. Note that these are not of Type I as a von neumann algebra (if $H=\mathbb C$ and $\mathfrak A$ is unital, then $\mathbb B(H\otimes \mathfrak A)=\mathfrak A$), but they resemble type I objects in the category of $\mathfrak A$-modules.     A similar argument shows that
$$\bigoplus_{i\in I} B_i\otimes \mathbb K(H_i\otimes \mathfrak A)$$
is $\mathfrak A$-nuclear, for commutative $C^*$-algebras $B_i$ and Hilbert spaces $H_i$.

\section{tensor products}

In this section we give tensor product characterizations of $\mathfrak A$-nuclearity and $\mathfrak A$-exactness. First we need to introduce the module versions of minimal and maximal tensor products. 

Let $A$ and $B$ be $C^*$-algebras and right $\mathfrak A$-modules with compatible actions. Let $\pi: A\to\mathbb B(H\otimes\mathfrak A)$ and $\sigma: B\to\mathbb B(K\otimes\mathfrak A)$  be  faithful representations, for Hilbert spaces $H$ and $K$, which are also right module maps.  For left Hilbert modules $H_\mathfrak A:=H\otimes\mathfrak A$ and $K_\mathfrak A:=K\otimes\mathfrak A$, and  the  Hilbert space  $\bar H$ of the corresponding contragredient representation, we may regard the left $\mathfrak A^{op}$-module $\bar H_\mathfrak A:=\bar H\otimes\mathfrak A^{op}$ as a right $\mathfrak A$-module via  
$$(\bar\xi\otimes \beta^{op})\cdot \alpha:= \bar\xi\otimes (\alpha^{op}\beta^{op})=\bar\xi\otimes (\beta\alpha)^{op}, \ \ (\xi\in H, \alpha,\beta\in\mathfrak A),$$ 
which in turn induces a $*$-homomorphism 
$$\phi: \mathfrak A\to \mathbb B(\bar H_\mathfrak A); \ \phi(\alpha)(\bar\xi\otimes \beta^{op}):=\bar\xi\otimes (\beta\alpha)^{op}.$$ Consider the interior tensor product $\bar H_\mathfrak A\otimes_{\mathfrak A} K_\mathfrak A$ \cite[page 40]{lan} (where here we use this notation for the completion, denoted in \cite{lan} by $\bar H_\mathfrak A\otimes_{\phi} K_\mathfrak A$). Similar to what is observed by Lance \cite[page 42]{lan}, $\bar H_\mathfrak A\otimes_{\mathfrak A} Y$ is canonically identified with $\bar H\otimes Y$, for each left Hilbert $\mathfrak A$-module $Y$. In particular, $\bar H_\mathfrak A\otimes_{\mathfrak A} K_\mathfrak A\simeq \bar H\otimes K\otimes \mathfrak A$, canonically.  

Let $A\odot B^{op}$ be the algebraic tensor product of $A$ with the opposite algebra of $B$ and consider the faithful representation $\pi\otimes \sigma^{op}:  A\odot B^{op}\to \mathbb B(H_{\mathfrak A})\odot\mathbb B(K_{\mathfrak A})^{op}$ given by $\pi\otimes \sigma^{op}(a\otimes b^{op})=\pi(a)\otimes \sigma(b)^{op}$. Let $I_{\mathfrak A}$ be the two sided ideal of $A\odot B^{op}$ generated by elements of the form
$$a\cdot\alpha\otimes b^{op}-a\otimes \alpha\cdot_{op}b^{op}=a\cdot\alpha\otimes b^{op}-a\otimes (b\cdot\alpha)^{op},$$
for $\alpha\in\mathfrak A, a\in A, b\in B$ and form the corresponding quotient $A\odot_{\mathfrak A}B^{op}:=(A\odot B^{op})/I_{\mathfrak A}$. Let us do the same thing for algebras of adjointable operators and write $\mathbb B(H_{\mathfrak A})\odot_{\mathfrak A}\mathbb B(K_{\mathfrak A})^{op}:=(\mathbb B(H_{\mathfrak A})\odot\mathbb B(K_{\mathfrak A})^{op})/I$ for the corresponding ideal $I$. Now since $\pi\otimes \sigma^{op}(I_{\mathfrak A})= I\cap \ \text{range}(\pi\otimes \sigma^{op})$, we could lift $\pi\otimes \sigma^{op}$ to a faithful representation $$\pi\otimes \sigma^{op}: A\odot_{\mathfrak A}B^{op}\to \mathbb B(H_{\mathfrak A})\odot_{\mathfrak A}\mathbb B(K_{\mathfrak A})^{op}\subseteq \mathbb B(\bar H_{\mathfrak A}\otimes_{\mathfrak A} K_{\mathfrak A}),$$
where the last inclusion is proved as in \cite[3.3.9]{bo}.
For a finite sum $u=\sum_i a_i\otimes b_i\in A\odot_{\mathfrak A}B^{op}$ let us define the minimal norm of $u$ by
$$\big\|\sum_i a_i\otimes b_i\big\|_{min}=\big\|\sum_i\pi(a_i)\otimes \sigma^{op}(b_i)\big\|,$$
where the norm on the right hand side is in $\mathbb B(\bar H_{\mathfrak A}\otimes_{\mathfrak A} K_{\mathfrak A})=\mathbb B(\bar H\otimes K\otimes {\mathfrak A})$. This is a well defined $C^*$-norm as we work with faithful representations, and the completion $A\otimes_{\mathfrak A}^{min}B^{op}$ of $A\odot_{\mathfrak A}B^{op}$ is a $C^*$-algebra and a right $\mathfrak A$-module, and we could extend $\pi\otimes \sigma^{op}$ to a faithful representation $\pi\otimes \sigma^{op}: A\otimes_{\mathfrak A}^{min}B^{op}\to \mathbb B(\bar H\otimes K\otimes {\mathfrak A}).$ Moreover the right action of $\mathfrak A$ on $A\odot_{\mathfrak A}B^{op}$ is continuous in this norm and extends to a right action on $A\otimes_{\mathfrak A}^{min}B^{op}$ and the last representation is also a right module map.

Next we check the independence of the $min$ norm from the faithful representations involved. 

\begin{lemma} \label{min}
The min-norm is independent of the choice of the faithful representations $\pi: A\to\mathbb B(H_{\mathfrak A})$ and $\sigma: B\to\mathbb B(K_{\mathfrak A})$ in the left $\mathfrak A$-Hilbert modules $H_{\mathfrak A}$ and $K_{\mathfrak A}$.
\end{lemma}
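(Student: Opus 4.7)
The plan is to adapt the classical proof of Takesaki's theorem on independence of the spatial C*-tensor norm, translating it to the module setting where interior tensor products $\bar H\otimes_{\mathfrak A}K$ replace ordinary Hilbert-space tensor products. By a symmetry and two-step transitivity argument, it suffices to prove: for any fixed faithful module representation $\pi:A\to \mathbb B(H)$ in an $\mathfrak A$-Hilbert space and any two faithful module representations $\sigma_j:B\to\mathbb B(K_j)$ ($j=1,2$) in $\mathfrak A$-Hilbert spaces,
$$\|\bar\pi\otimes\sigma_1^{op}(u)\|_{\mathbb B(\bar H\otimes_{\mathfrak A}K_1)}=\|\bar\pi\otimes\sigma_2^{op}(u)\|_{\mathbb B(\bar H\otimes_{\mathfrak A}K_2)}$$
for every $u\in A\odot_{\mathfrak A}B^{op}$; the analogous statement with the roles of $\pi$ and $\sigma$ exchanged will then yield the full independence.

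For the first half of the argument, I would form the direct sum $\sigma:=\sigma_1\oplus\sigma_2:B\to \mathbb B(K_1\oplus K_2)$, which is again a faithful module representation in an $\mathfrak A$-Hilbert space. Using distributivity of the interior tensor product over direct sums of $\mathfrak A$-Hilbert spaces,
$$\bar H\otimes_{\mathfrak A}(K_1\oplus K_2)\cong(\bar H\otimes_{\mathfrak A}K_1)\oplus(\bar H\otimes_{\mathfrak A}K_2),$$
the operator $\bar\pi\otimes\sigma^{op}(u)$ acts as a diagonal block, so $\|\bar\pi\otimes\sigma^{op}(u)\|=\max_j\|\bar\pi\otimes\sigma_j^{op}(u)\|$. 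This already yields the inequality $\|\bar\pi\otimes\sigma_j^{op}(u)\|\le\|\bar\pi\otimes\sigma^{op}(u)\|$ for each $j$, and in particular shows the two candidate norms differ only by being compared through a common ``ambient'' norm.

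For the reverse inequality I would mimic the slice-map approach used in the classical argument. For each functional $\omega$ on $\mathbb B(\bar H)$ that is $\mathfrak A$-module linear in an appropriate sense, the left slice map $L_\omega$ on $\mathbb B(\bar H)\odot\mathbb B(K)$ descends to the $\mathfrak A$-balanced quotient $\mathbb B(\bar H)\odot_{\mathfrak A}\mathbb B(K)$ and extends to a c.c.p. module map into $\mathbb B(K)$ of norm at most $\|\omega\|$. Pulling back along $\bar\pi$ expresses the norm $\|\bar\pi\otimes\sigma_j^{op}(u)\|$ as a supremum over module functionals on $A$ of quantities of the form $\|\sum\phi(a_i)\sigma_j^{op}(b_i)\|$, an expression that is intrinsic to $A$ and independent of $\pi$; a symmetric right slice-map argument then removes the dependence on $\sigma_j$. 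Combining the two, the norm is determined entirely by the $\mathfrak A$-module C*-algebra structures of $A$ and $B^{op}$, which proves the lemma.

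The principal obstacle will be the slice-map step: the classical proof uses arbitrary vector states on $\mathbb B(H)$, but in our setting $L_\omega$ must factor through the ideal $I_{\mathfrak A}$ generated by the balancing relations $T\cdot\alpha\otimes S-T\otimes\alpha\cdot_{op}S$. This forces $\omega$ to satisfy a module compatibility of the form $\omega(T\cdot\alpha)=\omega(T)\cdot\alpha$ (interpreted via the $\mathfrak A$-valued pairing). One must then show that enough such module functionals exist to compute the norm --- for instance, by Hahn-Banach extension of module functionals from $\bar\pi(A)$ to all of $\mathbb B(\bar H)$, or by restricting attention to vector states $\omega_\xi$ coming from vectors $\xi\in H$ with a controlled behaviour under the $\mathfrak A$-action (e.g.\ arising from a nondegenerate approximate identity of $\mathfrak A$). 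Once this compatibility is established, the remainder of the argument follows the classical Takesaki template with only cosmetic changes.
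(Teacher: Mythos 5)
Your direct-sum step is fine, but it only yields the one-sided inequality $\|\bar\pi\otimes\sigma_j^{op}(u)\|\leq\|\bar\pi\otimes(\sigma_1\oplus\sigma_2)^{op}(u)\|$; the entire burden of the lemma falls on your slice-map step, and that step contains a genuine gap. A scalar functional $\omega$ on $\mathbb B(\bar H)$ can never make $L_\omega$ descend to the balanced quotient: killing $T\cdot\alpha\otimes S-T\otimes S\cdot\alpha$ would force $\omega(T\cdot\alpha)\,S=\omega(T)\,S\sigma_{\mathfrak A}(\alpha)$ in $\mathbb B(K)$, which is impossible for scalar values of $\omega$ unless the $\mathfrak A$-action on $K$ is trivial. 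So you are forced to $\mathfrak A$-valued slices $\omega:\mathbb B(\bar H)\to\mathfrak A$ with a module-covariance property, and even then the induced map $T\otimes S\mapsto \sigma_{\mathfrak A}(\omega(T))S$ annihilates $I_{\mathfrak A}$ only up to a commutation between $\sigma_{\mathfrak A}(\mathfrak A)$ and $S$ that fails in general. Moreover, neither of your proposed sources of such functionals exists in this setting: $H$ is an $\mathfrak A$-Hilbert space, i.e.\ a Hilbert space with a scalar-valued inner product carrying a representation of $\mathfrak A$, so there is no ``$\mathfrak A$-valued pairing'' and vector states are scalar; and Hahn--Banach does not produce module maps --- extension of c.c.p.\ module maps is a module Arveson theorem which in this paper requires injectivity hypotheses on the target (\cite[Theorem 3.8]{am}) that are absent from the lemma. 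There is also a circularity risk: proving min-continuity of your slice on the completed balanced tensor product presupposes control of exactly the norm whose well-definedness is at stake.

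The paper avoids slices altogether. It fixes $\pi$ and compresses: choose an increasing net of finite rank projections $p_i\in\mathbb B(H)$ converging strongly to the identity with $p_i\mathbb B(H)p_i\cong\mathbb M_{n_i}(\mathfrak A)$; then the balanced algebraic tensor product collapses, $\mathbb M_{n_i}(\mathfrak A)\odot_{\mathfrak A}B^{op}\cong\mathbb M_{n_i}(B^{op})$, which carries a \emph{unique} $C^*$-norm, so the compressed quantities $\bigl\|\sum_j(p_i\bar\pi(a_j)p_i)\otimes\sigma^{op}(b_j)\bigr\|$ agree for $\sigma$ and $\sigma'$, and taking the supremum over $i$ (using $p_i\otimes 1\to 1$ strongly) recovers the full norm. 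If you want to rescue your outline, the viable repair is to replace arbitrary module functionals by exactly these corner compressions $T\mapsto p_iTp_i$, which are the canonical completely positive $\mathbb M_{n_i}(\mathfrak A)$-valued ``slices'' compatible with the balancing --- but at that point you have reproduced the paper's argument rather than a Takesaki-style one.
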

\begin{proof}
We need to check that changing $\sigma$ to $\sigma^{'}: B\to\mathbb B(K^{'}_{\mathfrak A})$ does not effect the norm. By Lemma \ref{fr}, choose an increasing net $\{q_i\}$ of finite rank (say $n_i$) projections in $\mathbb B(H_{\mathfrak A})$ converging to the identity in the strong operator topology. Then $p_i\mathbb B(H_{\mathfrak A})p_i\cong\mathbb M_{n_i}(\mathfrak A)$ and there is a unique $C^*$-norm on $\mathbb M_{n_i}(\mathfrak A)\odot_{\mathfrak A}B^{op}=\mathbb M_{n_i}(B^{op})$, thus
$$\big\|\sum_i(p_i\pi(a_i)p_i)\otimes \sigma^{op}(b_i)\big\|=\big\|\sum_i(p_i\pi(a_i)p_i)\otimes \sigma^{' op}(b_i)\big\|,$$
and taking supremum over $i$, we get the result.
\end{proof}

Similarly, we may define the maximal norm of $u=\sum_i a_i\otimes b_i\in A\odot_{\mathfrak A}B^{op}$ by
$$\big\|\sum_i a_i\otimes b_i\big\|_{max}=\sup\{\big\|\Pi(\sum_i a_i\otimes b_i)\big\|: \ \Pi\in Hom_{\mathfrak A}(A\odot_{\mathfrak A}B^{op}, \mathbb B(X))\},$$
where the supremum ranges over all left $\mathfrak A$-Hilbert modules $X$ and modules maps and $*$-homomorphisms $\Pi$. This is a $C^*$-norm  and the completion $A\otimes_{\mathfrak A}^{max}B^{op}$ of $A\odot_{\mathfrak A}B^{op}$ is a $C^*$-algebra and a right $\mathfrak A$-module, and for each $C^*$-algebra and right $\mathfrak A$-module $C$ with compatible actions, we could extend each $\Pi\in Hom_{\mathfrak A}(A\odot_{\mathfrak A}B^{op}, C)$ to a (bounded) $*$-homomorphism and module map $\tilde\Pi\in Hom_{\mathfrak A}(A\otimes_{\mathfrak A}^{max}B^{op}, C)$. When $\mathfrak A$ is a von Neumann algebra and $A$ and $B$ be von Neumann algebras and right $\mathfrak A$-modules with compatible actions, we may construct the von Neumann algebra and right $\mathfrak A$-module $A\bar\otimes_{\mathfrak A}B^{op}$ with a similar universal property for each von Neumann algebra and right $\mathfrak A$-module $C$.

Before we treat the non unital case, we need to take care of the possibility of representing faithfully on Hilbert modules of the form $H\otimes{\mathfrak A}$, for a Hilbert space $H$. 
Indeed, this is not always possible and we shall assume this whenever we work with the  minimal module tensor product. The good news is that such a faithful (non degenerate) representation exists by the Kasparov absorption theorem \cite[Theorem 6.2]{lan}, when $A$ is separable (see, \cite[Lemma 6.4]{lan}). For the rest of this section, to guarantee the existence of a faithful representation as above, one may  assume that the $C^*$-algebra involved is separable. When this is not possible (like in the case of infinite dimensional von Neumann algebras), we restrict ourselves to modules which have such a faithful representation.

To treat the non unital modules, one should note that when $\mathfrak A$ is a unital $C^*$-algebra, as in \cite[3.3.12]{bo}, for any $C^*$-norm $\varrho$ on $A\odot_{\mathfrak A}B^{op}$ can be extended to a $C^*$-norm on $(A\oplus \mathfrak A)\odot_{\mathfrak A}(B\oplus \mathfrak A)^{op}$ and $(A\oplus \mathfrak A)\otimes_{\mathfrak A}^{\varrho}(B\oplus \mathfrak A)^{op}$ is a unital $C^*$-algebra and a neo-unital module.

The following proposition follows from the independence of the {\it min} norm from the choice of faithful representations and the universal property of the {\it max} norm. Note that here the right hand sides of the isomorphisms are not $\mathfrak A$-modules, as the module action on $A\odot B^{op}$ is not continuous in the min or max $C^*$-norms in general.

\begin{proposition} \label{iso}
With the above notations, $A\otimes_{\mathfrak A}^{min}B^{op}\cong (A\otimes_{min}B^{op})/\overline{I_{\mathfrak A}}^{min}$ and $A\otimes_{\mathfrak A}^{max}B^{op}\cong (A\otimes_{max}B^{op})/\overline{I_{\mathfrak A}}^{max}$, as $C^*$-algebras.
\end{proposition}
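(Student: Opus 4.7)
Both isomorphisms are induced by the algebraic quotient $q\colon A\odot B^{op}\to A\odot_{\mathfrak A}B^{op}$ (with kernel $I_{\mathfrak A}$), and the task is to show that $q$ extends to $C^*$-isomorphisms in both the $min$ and $max$ cases. The general strategy is to construct $*$-homomorphisms in both directions between $(A\otimes_{\varrho}B^{op})/\overline{I_{\mathfrak A}}^{\varrho}$ and $A\otimes_{\mathfrak A}^{\varrho}B^{op}$ for $\varrho\in\{min,max\}$, and verify that they are mutually inverse on the common dense $*$-subalgebra $A\odot_{\mathfrak A}B^{op}$, at which point continuity extends the inversion to the completions.

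The $max$ case follows at once from the contractivity of $*$-homomorphisms of pre-$C^*$-algebras into $C^*$-algebras. Indeed, the composition $A\odot B^{op}\xrightarrow{q}A\odot_{\mathfrak A}B^{op}\hookrightarrow A\otimes_{\mathfrak A}^{max}B^{op}$ is a $*$-homomorphism into a $C^*$-algebra and is therefore bounded by $\|\cdot\|_{max}$; extending and killing $\overline{I_{\mathfrak A}}^{max}$ gives one direction. Conversely, the canonical map $A\odot_{\mathfrak A}B^{op}\to (A\otimes_{max}B^{op})/\overline{I_{\mathfrak A}}^{max}$ is a $*$-homomorphism into a $C^*$-algebra, hence bounded by $\|\cdot\|_{max,\mathfrak A}$, and extends to the reverse arrow. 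The two compositions are the identity on the dense subalgebra, and hence everywhere.

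For the $min$ case the same scheme applies, but the upper bound $\|q(u)\|_{min,\mathfrak A}\le\|u\|_{min}$ on $A\odot B^{op}$ must be established by hand, since $\|\cdot\|_{min,\mathfrak A}$ lacks a universal property. Using Lemma \ref{min}, I would fix a single pair of faithful module representations $\pi\colon A\hookrightarrow\mathbb B(H)$, $\sigma\colon B\hookrightarrow\mathbb B(K)$ in $\mathfrak A$-Hilbert spaces that compute both sides. Since elements of $I_{\mathfrak A}$ act as zero on the balanced tensor $\bar H\otimes_{\mathfrak A}K$, the composite $A\odot B^{op}\xrightarrow{q}A\odot_{\mathfrak A}B^{op}\xrightarrow{\bar\pi\otimes\sigma^{op}}\mathbb B(\bar H\otimes_{\mathfrak A}K)$ realises each $\bar\pi(a)\otimes\sigma^{op}(b)$ as the canonical descent of its counterpart on $\bar H\otimes K$, which is norm-nonincreasing, giving the desired inequality. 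One then gets a surjective $*$-homomorphism $A\otimes_{min}B^{op}\to A\otimes_{\mathfrak A}^{min}B^{op}$ with $\overline{I_{\mathfrak A}}^{min}\subseteq\ker$, and the induced map on the quotient is injective because on $A\odot_{\mathfrak A}B^{op}$ it coincides with the faithful spatial representation defining $\|\cdot\|_{min,\mathfrak A}$.

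The main obstacle is precisely the descent step in the $min$ case: $\bar H\otimes_{\mathfrak A}K$ is not literally a quotient Hilbert space of $\bar H\otimes K$ (the two carry different inner products), so the norm comparison cannot be read off from a single orthogonal projection. The cleanest route seems to be a finite-rank compression paralleling Lemma \ref{min}: approximate $1_H$ by projections $p_i\in\mathbb B(H)$ with $p_i\mathbb B(H)p_i\cong\mathbb M_{n_i}(\mathfrak A)$, observe that on $\mathbb M_{n_i}(\mathfrak A)\odot_{\mathfrak A}B^{op}\cong\mathbb M_{n_i}(B^{op})$ there is a unique $C^*$-norm under which the ordinary and balanced tensor products visibly agree, and pass to the supremum over $i$. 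Once that inequality is secured, the remainder of the argument is routine $C^*$-bookkeeping.
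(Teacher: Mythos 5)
Your overall architecture is exactly the paper's: the paper offers no proof environment at all, only the one-line remark that the proposition ``follows from the independence of the min $C^*$-norm from the choice of faithful $C^*$-representations and the universal property of max module-norm'', which is precisely your pairing of a Lemma~\ref{min}-style compression argument for $min$ with universality for $max$ (and your finite-rank compression through $p_i\mathbb B(H)p_i\cong\mathbb M_{n_i}(\mathfrak A)$ is the paper's own technique from the proof of Lemma~\ref{min}). However, read as a standalone proof, your write-up contains one genuinely fallacious step, in the $min$ case: you deduce injectivity of the induced surjection $(A\otimes_{min}B^{op})/\overline{I_{\mathfrak A}}^{min}\to A\otimes_{\mathfrak A}^{min}B^{op}$ from the fact that it ``coincides with the faithful spatial representation'' on the dense subalgebra. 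Injectivity on a dense $*$-subalgebra does not imply injectivity of a $*$-homomorphism between $C^*$-algebras; the canonical quotient $A\otimes_{max}B^{op}\to A\otimes_{min}B^{op}$ is injective on $A\odot B^{op}$ and typically has large kernel. What is actually needed is the reverse inequality $\|u+\overline{I_{\mathfrak A}}^{min}\|\leq\|(\bar\pi\otimes\sigma^{op})(q(u))\|$ for $u\in A\odot B^{op}$, equivalently that the kernel of the descended balanced spatial representation of $A\otimes_{min}B^{op}$ is no larger than $\overline{I_{\mathfrak A}}^{min}$. Your compression argument delivers only $\|q(u)\|_{min,\mathfrak A}\leq\|u\|_{min}$, which constructs the map but says nothing about its kernel; closing the gap needs a separate idea (e.g.\ a slice-map or compression argument showing that any element of $A\otimes_{min}B^{op}$ killed by the balanced representation lies in $\overline{I_{\mathfrak A}}^{min}$, for instance by cutting with the $p_i$ and using the identification $\mathbb M_{n_i}(\mathfrak A)\odot_{\mathfrak A}B^{op}\cong\mathbb M_{n_i}(B^{op})$ at the level of kernels). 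To be fair, the paper does not supply this either; but your text asserts it via an invalid inference rather than flagging it.

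There is a smaller but real soft spot in your $max$ direction as well: the claim that the canonical map $A\odot_{\mathfrak A}B^{op}\to(A\otimes_{max}B^{op})/\overline{I_{\mathfrak A}}^{max}$ is ``a $*$-homomorphism into a $C^*$-algebra, hence bounded by $\|\cdot\|_{max,\mathfrak A}$'' is not automatic, because the module-max norm is defined as a supremum only over \emph{module} representations on $\mathfrak A$-Hilbert spaces, not over all representations of $A\odot_{\mathfrak A}B^{op}$. To dominate the quotient norm you must verify that every (nondegenerate) representation of $A\odot B^{op}$ annihilating $I_{\mathfrak A}$ --- such as one obtained from a faithful representation of the quotient --- is dominated by module representations; this can be done by endowing the representation space with an $\mathfrak A$-action via $\alpha\cdot\pi(a\otimes b)\xi:=\pi((a\cdot\alpha)\otimes b)\xi$ and a bounded approximate identity, using the identity $(a\cdot\alpha)b=(ab)\cdot\alpha$ which follows from the compatibility axioms, but it is a genuine step (and it silently uses nondegeneracy of the actions), not routine bookkeeping. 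Your opposite $max$ arrow --- that $u\mapsto\|q(u)\|_{max,\mathfrak A}$ is a $C^*$-seminorm on $A\odot B^{op}$ and hence dominated by $\|u\|_{max}$ --- is correct as stated.
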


The next lemma is proved similar to the classical case \cite[IV.3.1.1]{b}.

\begin{lemma} \label{tensor}
If $A$ and $B$ are $C^*$-algebras and $\mathfrak A$-modules with compatible actions, then $A\otimes_{\mathfrak A}^{min}B$ is $\mathfrak A$-nuclear iff both $A$ and $B$ are $\mathfrak A$-nuclear.
\end{lemma}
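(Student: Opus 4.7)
The plan is to handle the two directions separately, mirroring the classical argument in \cite[IV.3.1.1]{b}.

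\emph{Easier direction} (both $\mathfrak A$-nuclear implies the tensor product is): take approximating c.c.p. module maps $\varphi_n:A\to\mathbb M_{k_n}(\mathfrak A)$, $\psi_n:\mathbb M_{k_n}(\mathfrak A)\to A$ and $\varphi_n':B\to\mathbb M_{l_n}(\mathfrak A)$, $\psi_n':\mathbb M_{l_n}(\mathfrak A)\to B$, and form tensor products over $\mathfrak A$ to obtain $\varphi_n\otimes\varphi_n':A\otimes_{\mathfrak A}^{min}B\to \mathbb M_{k_n}(\mathfrak A)\otimes_{\mathfrak A}^{min}\mathbb M_{l_n}(\mathfrak A)$, and similarly $\psi_n\otimes\psi_n'$ in the reverse direction. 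Two technical checkpoints arise: first, that c.c.p. module maps induce c.c.p. module maps on the minimal module tensor product (this follows from Lemma \ref{min} applied to Stinespring dilations, so that the min-norm is preserved under compressions by each factor); second, an identification $\mathbb M_{k_n}(\mathfrak A)\otimes_{\mathfrak A}^{min}\mathbb M_{l_n}(\mathfrak A)\cong\mathbb M_{k_nl_n}(\mathfrak A)$, obtained by noting that the balancing ideal $I_{\mathfrak A}$ collapses one copy of $\mathfrak A$ (via Proposition \ref{iso} and uniqueness of $C^*$-norms on $\mathbb M_{k_n}\odot\mathbb M_{l_n}$). A standard $\varepsilon/3$-argument on elementary tensors, combined with the uniform contractivity $\|\psi_n\otimes\psi_n'\circ(\varphi_n\otimes\varphi_n')\|\le 1$, then yields point-norm convergence to the identity on all of $A\otimes_{\mathfrak A}^{min}B$.

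\emph{Harder direction} (tensor product $\mathfrak A$-nuclear implies each factor is): the plan is to factor $\mathrm{id}_A$ through $A\otimes_{\mathfrak A}^{min}B$ via a c.c.p. module embedding $\iota:A\hookrightarrow A\otimes_{\mathfrak A}^{min}B$ and a c.c.p. module slice map $s_\omega:A\otimes_{\mathfrak A}^{min}B\to A$ of the form $a\otimes b\mapsto a\cdot\omega(b)$, for some c.c.p. $\mathfrak A$-bimodule map $\omega:B\to\mathfrak A$ (playing the role of a ``module-valued state''). The embedding $\iota$ is taken as $a\mapsto a\otimes e_\lambda$ for a module approximate unit, passing if necessary to the multiplier algebra or the unitization $B\oplus\mathfrak A$; the map $\omega$ is obtained from the canonical projection of the unitization onto $\mathfrak A$. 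Then $s_\omega\circ\iota=\mathrm{id}_A$ factors through the $\mathfrak A$-nuclear middle algebra, so Lemma \ref{nuc}(iii) (composition with a nuclear map is nuclear) gives $\mathfrak A$-nuclearity of $A$; by symmetry, of $B$ as well.

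\emph{Main obstacle}: the chief difficulty is ensuring $s_\omega$ descends through the balancing ideal $I_{\mathfrak A}$. Unlike the scalar case, where any state on $B$ suffices, here $\omega$ must be a c.c.p. $\mathfrak A$-bimodule map whose values interact correctly with the right $\mathfrak A$-action on $A$, namely $(a\cdot\alpha)\cdot\omega(b)=a\cdot\omega(b\cdot\alpha)$. Under the standing setup of the paper this holds cleanly when $A$ is a central $\mathfrak A$-module (or when $\omega$ has central range), which is the context in which the lemma should be read; absent centrality one must adjust $\omega$ by an additional conditional expectation onto the center of $\mathfrak A$. A secondary point is confirming that the tensor product of c.c.p. module maps extends continuously to the minimal completion, which follows from Proposition \ref{iso} by pulling back to the classical minimal tensor product, where tensoring c.c.p. maps is routine.
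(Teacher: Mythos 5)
The paper contains no written proof of this lemma --- it merely asserts that it ``is proved similar to the classical case \cite[IV.3.1.1]{b}'' --- so your proposal must be judged against the classical template. Your forward direction follows that template correctly: tensoring the two approximate factorizations is legitimate by the paper's Proposition \ref{cont} (c.p.\ module maps descend through the balancing ideal and are min-continuous), and the identification $\mathbb M_{k}(\mathfrak A)\otimes_{\mathfrak A}^{min}\mathbb M_{l}(\mathfrak A)\cong \mathbb M_{kl}(\mathfrak A)$ is exactly the collapsing the author himself uses in the proof of Lemma \ref{min}, where $\mathbb M_{n}(\mathfrak A)\odot_{\mathfrak A}B^{op}=\mathbb M_n(B^{op})$ (modulo the paper's pervasive sloppiness about $B$ versus $B^{op}$, which your sketch inherits harmlessly). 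One remark: the ``main obstacle'' you flag is not actually an obstacle. For a (neo-)unital module the paper's compatibility axioms already force centrality: taking $b=1_A$ in $(ab)\cdot\alpha=a(b\cdot\alpha)$ gives $a\cdot\alpha=a(1_A\cdot\alpha)$, and then $(a\cdot\alpha)^*=a^*\cdot\alpha^*$ forces $1_A\cdot\alpha\in Z(A)$, so the balancing identity $(a\cdot\alpha)\cdot\omega(b)=a\cdot\omega(b\cdot\alpha)$ holds automatically for any right module map $\omega$. Conversely, your proposed fallback of composing with ``a conditional expectation onto the center of $\mathfrak A$'' is unsound, since such expectations need not exist.

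The genuine gap is in the harder direction, and it is fatal as written. The map $\omega$ you construct --- the canonical projection of the unitization $B\oplus\mathfrak A$ onto $\mathfrak A$ --- \emph{vanishes identically on} $B$. Hence with $\iota_\lambda(a)=a\otimes e_\lambda$, $e_\lambda\in B$, you get $s_\omega\circ\iota_\lambda(a)=a\cdot\omega(e_\lambda)=0$, not $\mathrm{id}_A$. The only way this $\omega$ yields $s_\omega\circ\iota=\mathrm{id}_A$ is to take $\iota(a)=a\otimes 1_{B\oplus\mathfrak A}$, but that factors $\mathrm{id}_A$ through $A\otimes_{\mathfrak A}^{min}(B\oplus\mathfrak A)$, whose $\mathfrak A$-nuclearity is not among your hypotheses: you only know it for $A\otimes_{\mathfrak A}^{min}B$, and since $A\cong A\otimes_{\mathfrak A}^{min}\mathfrak A$ is a quotient of the unitized tensor product, asserting the latter's nuclearity presupposes precisely what you are trying to prove. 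What the slice argument really requires is a c.c.p.\ module map $\omega:B\to\mathfrak A$ together with a positive $b_0\in B$ (or an approximate unit) such that $a\cdot\omega(b_0)\to a$ --- a \emph{nonvanishing} module-valued state --- and, unlike the scalar case where any state with $\phi(b_0)=1$ exists, such an $\omega$ need not exist for a general module $B$: it amounts to a conditional-expectation-type left inverse of the structural homomorphism of $\mathfrak A$ into (the center of the multiplier algebra of) $B$. It does exist under extra hypotheses, e.g.\ when $\mathfrak A$ is injective (extend $\mathrm{id}_{\mathfrak A}$ by Arveson and use a multiplicative-domain argument to see the extension is a module map), and trivially in the paper's one actual application of this direction, where $B=\mathfrak A=C_r^*(E_S)$ and $\omega=\mathrm{id}_{\mathfrak A}$ works. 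So the harder half of your argument does not close in the stated generality: you must either supply a hypothesis guaranteeing such an $\omega$, or find a different route to $\mathfrak A$-nuclearity of the factors.
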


Next, let us extend a result due to Takesaki \cite{t} to the set up of $C^*$-modules.
We say that a $C^*$-norm $\tilde\varrho$ on $A\odot_{\mathfrak A}B^{op}$ is a module norm if the module action of $\mathfrak A$ on $A\odot_{\mathfrak A}B^{op}$ is $\tilde\varrho$-continuous. In this case, $\tilde\varrho$ induces a $C^*$-norm $\varrho$ on $A\odot B^{op}$ as follows: For  $u=\sum_i a_i\otimes b_i\in A\odot B^{op}$, define
$$\varrho(u):=\sup\{\|\pi(u)\|: \ \pi(I_{\mathfrak A})=0, \ \tilde\pi=\tilde\varrho\text{-continuous module map}\},$$
where the supremum ranges over all representations $\pi:A\odot B^{op}\to \mathbb B(X)$ in a Hilbert $\mathfrak A$-module $X$, and $\tilde\pi(u+I_{\mathfrak A}):=\pi(u)$. After completion, $A\otimes_{\mathfrak A}^{\tilde\varrho}B^{op}\cong (A\otimes_{\varrho}B^{op})/\overline{I_{\mathfrak A}}^{\varrho}$. It follows from the above discussion that $min$ and $max$ are modules norms. The module version of the Takesaki theorem now asserts that $min$ and $max$ are minimum and maximum among module norms (like the $min$ and $max$ $C^*$-norms that are minimum and maximum among all $C^*$-norms).

\begin{proposition}[Takesaki] \label{tak}
For each module norm  $\tilde\varrho$ on $A\odot_{\mathfrak A}B^{op}$, $\|\cdot\|_{min}\leq\tilde\varrho(\cdot)\leq\|\cdot\|_{max}.$ In particular, $\tilde\varrho$ is a cross norm and there are canonical surjective $*$-homomorphisms and module maps $A\otimes_{\mathfrak A}^{max}B^{op}\to A\otimes_{\mathfrak A}^{\tilde\varrho}B^{op}\to A\otimes_{\mathfrak A}^{min}B^{op}$.
\end{proposition}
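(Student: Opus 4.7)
The strategy is to reduce to the classical Takesaki theorem by transferring everything to $A\odot B^{op}$ equipped with the induced $C^*$-norm $\varrho$ constructed just above the statement, and then descending to $A\odot_{\mathfrak A}B^{op}=(A\odot B^{op})/I_{\mathfrak A}$ via the identification $A\otimes_{\mathfrak A}^{\tilde\varrho}B^{op}\cong (A\otimes_{\varrho}B^{op})/\overline{I_{\mathfrak A}}^{\varrho}$ recorded there, together with the analogous identifications for $min$ and $max$ supplied by Proposition \ref{iso}. First I would verify that $\varrho$ is a genuine $C^*$-norm on $A\odot B^{op}$: as the supremum of a nonempty family of $C^*$-seminorms that is bounded above by $\|\cdot\|_{max}$, it is automatically a $C^*$-seminorm, and since $\tilde\varrho$ is assumed to be a $C^*$-norm on the quotient $A\odot_{\mathfrak A}B^{op}$, $\varrho$ separates points modulo $I_{\mathfrak A}$.

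The upper bound $\tilde\varrho\leq\|\cdot\|_{max}$ is immediate from the universal property defining $\|\cdot\|_{max}$: any $\tilde\varrho$-continuous $*$-homomorphic module map $\tilde\pi:A\odot_{\mathfrak A}B^{op}\to\mathbb B(H)$ into an $\mathfrak A$-Hilbert space extends uniquely to $A\otimes_{\mathfrak A}^{max}B^{op}$, hence $\|\tilde\pi(u)\|\leq\|u\|_{max}$, and taking the supremum defining $\tilde\varrho$ yields the inequality. For the lower bound, I would apply the classical Takesaki theorem to the $C^*$-norm $\varrho$ on $A\odot B^{op}$ to obtain $\|\cdot\|_{min}\leq\varrho$ and then pass to the quotient. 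Since $\varrho$-convergence implies $\|\cdot\|_{min}$-convergence, $\overline{I_{\mathfrak A}}^{\varrho}\subseteq\overline{I_{\mathfrak A}}^{min}$, so the quotient $min$-norm on $(A\odot B^{op})/\overline{I_{\mathfrak A}}^{min}$ is dominated by the quotient $\varrho$-norm on $(A\odot B^{op})/\overline{I_{\mathfrak A}}^{\varrho}$; by Proposition \ref{iso} these quotient norms are precisely $\|\cdot\|_{min}$ and $\tilde\varrho$ on $A\odot_{\mathfrak A}B^{op}$, so $\|\cdot\|_{min}\leq\tilde\varrho$ follows.

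Once the sandwich $\|\cdot\|_{min}\leq\tilde\varrho\leq\|\cdot\|_{max}$ is established, the cross-norm property is forced because the module $min$ and $max$ norms are themselves cross norms on simple tensors (inherited from their classical counterparts on $A\odot B^{op}$ after modding out by $I_{\mathfrak A}$, which contains no relation capable of cancelling a generic $a\otimes b$ on its own). The two asserted surjective $*$-homomorphisms and module maps then arise in the standard way: the map from $A\otimes_{\mathfrak A}^{max}B^{op}$ comes from the universal property applied to the canonical $*$-homomorphism and module map $A\odot_{\mathfrak A}B^{op}\to A\otimes_{\mathfrak A}^{\tilde\varrho}B^{op}$, while the map to $A\otimes_{\mathfrak A}^{min}B^{op}$ is the continuous extension of the identity on $A\odot_{\mathfrak A}B^{op}$ allowed by $\|\cdot\|_{min}\leq\tilde\varrho$. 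The main point demanding care is the monotonicity of the closure of $I_{\mathfrak A}$ with respect to different $C^*$-norms, which is exactly what makes the descent from $A\odot B^{op}$ to its $\mathfrak A$-tensor quotients respect the direction of the inequalities; everything else is bookkeeping around the classical Takesaki theorem.
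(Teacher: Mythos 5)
You have in fact reconstructed the paper's own argument: the paper also gets the upper bound from the universal property of the max module norm, and for the lower bound pulls $\tilde\varrho$ back to the induced (semi)norm $\varrho$ on $A\odot B^{op}$ and descends through Proposition \ref{iso}, via the chain $\|u+I_{\mathfrak A}\|_{min}\leq\|u\|_{min}\leq\varrho(u)\leq\tilde\varrho(u+I_{\mathfrak A})$. Your quotient-norm bookkeeping (the inclusion of closures of $I_{\mathfrak A}$ and the comparison of the two quotient norms) is an expanded version of exactly this chain. So in terms of route, there is no difference.

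However, there is a genuine gap at the central step, and your own preliminary observation exposes it: every representation $\pi$ in the supremum defining $\varrho$ annihilates $I_{\mathfrak A}$, so $\varrho$ vanishes identically on $I_{\mathfrak A}$ and is only a $C^*$-seminorm whenever $I_{\mathfrak A}\neq 0$ (its null space is precisely $I_{\mathfrak A}$, which also means $\overline{I_{\mathfrak A}}^{\varrho}=I_{\mathfrak A}$, so your closure-monotonicity step is vacuous rather than the point of care). The classical Takesaki theorem asserts minimality of $\|\cdot\|_{min}$ among $C^*$-\emph{norms} on $A\odot B^{op}$, and it cannot be applied to $\varrho$: for any nonzero $u\in I_{\mathfrak A}$ one has $\|u\|_{min}>0=\varrho(u)$, so the inequality $\|\cdot\|_{min}\leq\varrho$ you invoke is simply false, and with it the pointwise estimate $\|u+v\|_{min}\leq\varrho(u+v)$ underlying your comparison of quotient norms. (To be fair, the paper's displayed chain contains the same false intermediate inequality, and its claim that $\tilde\varrho$ ``induces a $C^*$-norm $\varrho$'' is inaccurate for the same reason; you have faithfully reproduced the argument together with its weak point.) The deeper issue is that what is actually needed is minimality of the quotient norm on $(A\otimes_{min}B^{op})/\overline{I_{\mathfrak A}}^{min}$ among the $C^*$-norms on $A\odot_{\mathfrak A}B^{op}$ coming from module norms, and quotients of minimal tensor norms are in general \emph{not} minimal among $C^*$-norms on the quotient algebra --- this is precisely the failure of exactness visible in Proposition \ref{exseq}$(ii)$. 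Hence the module-norm hypothesis on $\tilde\varrho$ must enter in an essential way that neither your argument nor the paper's currently uses; a sound proof would, for example, show directly that the interior-tensor-product representation $\bar\pi\otimes\sigma^{op}$ into $\mathbb B(\bar H\otimes_{\mathfrak A}K)$ is $\tilde\varrho$-continuous (a module analogue of Takesaki's product-state argument), or exploit the compression-by-finite-rank-projections technique of Lemma \ref{min}, where the uniqueness of the $C^*$-norm on $\mathbb M_{n_i}(\mathfrak A)\odot_{\mathfrak A}B^{op}\cong\mathbb M_{n_i}(B^{op})$ can be brought to bear.
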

\begin{proof} The second inequality is clear. For the first, given $u\in A\odot B^{op}$, from Proposition \ref{iso}, we have
$$\|u+I_{\mathfrak A}\|_{min}\leq \|u\|_{min}\leq\varrho(u)=\sup_{\pi}\|\pi(u)\|=\sup_{\pi}\|\tilde\pi(u+I_{\mathfrak A})\|\leq\tilde\varrho(u+I_{\mathfrak A}),$$
where both supremums run over the set of all representations $\pi$ used in the definition of $\varrho(u)$.
\end{proof}

Next, we could handle the problem of continuity of the maps on module tensor products using the module version of the Kasparov-Stinespring dilation theorem \cite[Theorem 5.6]{lan}. Recall that a c.p. map $\phi: A\to \mathbb B(X)$ is called {\it strict} if $(\phi(e_i))$ is strictly Cauchy, for some bounded approximate identity $(e_i)$ of $A$. Non degenerate c.p. maps are automatically strict and u.c.p. maps are always non degenerate \cite[page 49]{lan}. 

\begin{proposition}\label{cont}
Let $A, B, C,$ and $D$ be $C^*$-algebras and right $\mathfrak A$-modules with compatible actions, let $\varphi: A\to C$ and $\psi:B\to D$ be strict c.p. module maps. Let $I_{\mathfrak A}\subseteq A\odot B^{op}$ and  $I_{\mathfrak A}^{'}\subseteq C\odot D^{op}$ be the ideals whose quotients give the associated module tensor products $A\odot_{\mathfrak A} B^{op}$ and $C\odot_{\mathfrak A} D^{op}$. Then $\varphi\otimes\psi(I_{\mathfrak A})\subseteq I_{\mathfrak A}^{'}$ and so we have a c.p. lift $\varphi\otimes\psi^{op}: A\odot_{\mathfrak A} B^{op}\to C\odot_{\mathfrak A} D^{op}$. This is continuous if both domain and range are endowed with the min or max module norms, and for the corresponding extensions we have
$$\|\varphi\otimes_{max}\psi^{op}\|=\|\varphi\otimes_{min}\psi^{op}\|\leq\|\varphi\|\|\psi\|.$$
In particular, when $\varphi$ and $\psi$ are strict c.c.p. admissible maps and $\varphi_i: A\to C$ are strict c.c.p. admissible maps converging to $\varphi$ in point-norm topology, then $\varphi_i\otimes_{min}\psi^{op}\to\varphi\otimes_{min}\psi^{op}$ in the point-norm topology, and the same holds for the $max$ norm.
\end{proposition}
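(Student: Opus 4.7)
The plan is to verify the assertions of the proposition in order. First, at the algebraic level, I would check $(\varphi\otimes\psi)(I_{\mathfrak A})\subseteq I_{\mathfrak A}'$: since $I_{\mathfrak A}$ is generated by elements of the form $a\cdot\alpha\otimes b - a\otimes b\cdot\alpha$ and since $\varphi,\psi$ are module maps,
$$(\varphi\otimes\psi)(a\cdot\alpha\otimes b - a\otimes b\cdot\alpha)=\varphi(a)\cdot\alpha\otimes\psi(b)-\varphi(a)\otimes\psi(b)\cdot\alpha,$$
which is a generator of $I_{\mathfrak A}'$. The induced map on $A\odot_{\mathfrak A}B^{op}$ is c.p.\ as a quotient of the c.p.\ algebraic tensor $\varphi\otimes\psi$.

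For min continuity, I would invoke the module Stinespring theorem \cite[Theorem 3.4]{am} to write $\varphi=V^*\pi(\cdot)V$ and $\psi=W^*\rho(\cdot)W$, with $\pi:A\to\mathbb B(H_\varphi)$ and $\rho:B\to\mathbb B(H_\psi)$ module $*$-representations on $\mathfrak A$-Hilbert spaces, and $V,W$ module maps satisfying $\|V\|^2=\|\varphi\|$, $\|W\|^2=\|\psi\|$. Applying the first step to the $*$-homomorphisms $\pi,\rho$, the algebraic tensor $\pi\otimes\rho$ descends to a $*$-representation of $A\odot_{\mathfrak A}B^{op}$ on the interior tensor product $\bar H_\varphi\otimes_{\mathfrak A}H_\psi$, which extends to $A\otimes_{\mathfrak A}^{min}B^{op}$ by the construction of the min norm (invoking Lemma \ref{min} to decouple from the choice of dilating representations). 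Identifying
$$(\varphi\otimes\psi)(u)=(V\otimes W)^*(\pi\otimes\rho)(u)(V\otimes W)$$
then yields $\|\varphi\otimes_{min}\psi\|\leq\|V\|^2\|W\|^2=\|\varphi\|\|\psi\|$; the reverse inequality is immediate from the cross-norm property established in Proposition \ref{tak}.

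The max case proceeds dually via the universal property, and is the main obstacle. For any $\mathfrak A$-Hilbert space $K$ and module $*$-homomorphism $\Pi:C\odot_{\mathfrak A}D^{op}\to\mathbb B(K)$, I would decompose $\Pi$ into a commuting pair of module $*$-representations $\sigma_C:C\to\mathbb B(K)$ and $\sigma_D:D^{op}\to\mathbb B(K)$, and then dilate the c.p.\ module maps $\sigma_C\circ\varphi$ and $\sigma_D\circ\psi$ with commuting ranges (the module analogue of the commuting-range construction in \cite[3.5.3]{bo}). This gives a module $*$-representation of $A\otimes_{\mathfrak A}^{max}B^{op}$ on an enlarged $\mathfrak A$-Hilbert space whose compression back to $K$ realizes $\Pi\circ(\varphi\otimes\psi)$ with norm at most $\|\varphi\|\|\psi\|$. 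Taking the supremum over $\Pi$ yields the desired max bound, and the cross-norm property supplies equality. The delicate point is keeping the commuting-range dilation inside the category of $\mathfrak A$-module maps, so that the resulting $*$-representation of the algebraic tensor factors through the module ideal $I_{\mathfrak A}$.

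Finally, point-norm convergence follows by a density argument. Fix $u\in A\otimes_{\mathfrak A}^{min}B^{op}$ and $\varepsilon>0$, and pick a finite sum $u_0=\sum_k a_k\otimes b_k\in A\odot_{\mathfrak A}B^{op}$ with $\|u-u_0\|_{min}<\varepsilon$. The uniform bound from the previous step gives
$$\|(\varphi_i\otimes_{min}\psi-\varphi\otimes_{min}\psi)(u-u_0)\|_{min}\leq 2\|\psi\|\varepsilon,$$
using $\|\varphi_i\|,\|\varphi\|\leq 1$, while
$$\|(\varphi_i\otimes_{min}\psi-\varphi\otimes_{min}\psi)(u_0)\|_{min}\leq\sum_k\|\varphi_i(a_k)-\varphi(a_k)\|\,\|\psi(b_k)\|\to 0$$
by hypothesis. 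The identical argument handles the max norm.
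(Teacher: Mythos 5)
Your overall route --- the algebraic check, module Stinespring for the min bound, a commutant-style dilation against the universal property for the max bound, and an $\varepsilon/3$ density argument for the point-norm convergence --- is exactly the route the paper intends: the paper omits the proof, pointing to the classical arguments \cite[3.5.3, 3.5.6]{bo} together with the module Stinespring theorem \cite[Theorem 3.4]{am}, and your deferred ``delicate point'' (the module version of the commutant lifting needed in the max case) is precisely what the remark after \cite[Theorem 3.4]{am} is meant to supply, so leaving it as a citation is consistent with the paper's own level of detail.

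There is, however, one genuine gap, and it sits in your very first step. The paper defines $I_{\mathfrak A}$ as the \emph{two-sided ideal} generated by the elements $a\cdot\alpha\otimes b-a\otimes b\cdot\alpha$, not as their linear span, and since $\varphi\otimes\psi$ is merely linear (it is c.p., not multiplicative), verifying the inclusion on generators proves only that the span of the generators maps into $I_{\mathfrak A}'$. The span is genuinely smaller than the ideal here: the compatibility condition $(ab)\cdot\alpha=a(b\cdot\alpha)$ absorbs a left multiplier only in the first leg, so for instance
$$(a'\otimes b')\bigl(a\cdot\alpha\otimes b-a\otimes b\cdot\alpha\bigr)=(a'a)\cdot\alpha\otimes bb'-a'a\otimes (b\cdot\alpha)b',$$
which differs from the generator $(a'a)\cdot\alpha\otimes bb'-a'a\otimes (bb')\cdot\alpha$ by the term $a'a\otimes\bigl(b(b'\cdot\alpha)-(b\cdot\alpha)b'\bigr)$, and there is no reason for that term to lie in the span unless the actions are central or commutative. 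So elements of the form $x g y$ with $g$ a generator are simply not covered by your computation, and you cannot propagate through $\varphi\otimes\psi$ multiplicatively. The repair is available inside your own proof, but the logical order must be reversed: establish the dilation identity $(\varphi\otimes\psi)(u)=(V\otimes W)^*(\pi\otimes\rho)(u)(V\otimes W)$ for all $u\in A\odot B^{op}$ first; since $\pi\otimes\rho$ is a $*$-homomorphism that annihilates the generators when represented on $\bar H_\varphi\otimes_{\mathfrak A}H_\psi$ (and hence annihilates all of $I_{\mathfrak A}$), every $u\in I_{\mathfrak A}$ is sent by $\varphi\otimes\psi$ into the kernel of the representation of $C\odot D^{op}$ on the interior tensor product, and that kernel is exactly $I_{\mathfrak A}'$ by the faithfulness of the induced representation in the paper's construction of the min norm. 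A smaller caveat of the same flavor: your ``reverse inequality is immediate from the cross-norm property'' leans on the equality $\|c\otimes d\|=\|c\|\,\|d\|$ in the \emph{balanced} tensor product, which is subtler than in the scalar case (elementary tensors can contract under the balancing relations); you are entitled to cite Proposition \ref{tak} as stated, but you should flag that the lower bound, unlike the upper bound, depends on that assertion rather than being automatic.
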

\begin{proof} Consider faithful representations $C\subseteq \mathbb B(X)$ and  $D\subseteq \mathbb B(Y)$, for left Hilbert $\mathfrak A$-modules $X$ and $Y$, and let $\pi_A: A\to \mathbb B(X_\varphi)$ and $\pi_B: B\to \mathbb B(Y_\psi)$ be the corresponding Kasparov-Stinespring dilations with associated bounded operators $v_\varphi\in \mathbb B(X_\varphi)$ and $v_\psi\in \mathbb B(Y_\psi)$. A close inspection of the proof of KSGNS construction \cite[pages 50-51]{lan} shows that the representations $\pi_A$ and $\pi_B$ and operators $v_\varphi$, $v_\psi$ could also be chosed to be  module maps. By an argument similar to that used in the definition of the module min norm (see beginning of this section), we get a $*$-homomorphism and module map
	$$\pi_A\otimes \pi_B^{op}: A\otimes^{min}_\mathfrak A B^{op}\to \mathbb B(\bar X_\varphi\otimes_{\mathfrak A}Y_\psi), $$  
where $\bar X_\varphi$ is the corresponding contragradient right Hilbert $\mathfrak A$-module. Put 
$$ \varphi\otimes_{min}\psi^{op}:=(v_\varphi\otimes v_\psi)^*(\pi_A\otimes \pi_B^{op}(\cdot)(v_\varphi\otimes v_\psi), $$ 
then, since $\pi_A, \pi_B$ and $v_\varphi, v_\psi$ are module maps, $ \varphi\otimes_{min}\psi^{op}(I_mathfrak A)\subseteq I^{'}_\mathfrak A$. Finally, 
$$\|\varphi\otimes_{min}\psi^{op}\|\leq\|v_\varphi\otimes v_\psi\|^2=\|v_\varphi\|^2\|v_\psi\|^2=\|\varphi\|\|\psi\|.$$

For the max norm, fist consider the special case that the algebras involved are unital, $B=D$, and $\psi=$id$_{B}$. Consider a faithful representation $C\otimes^{max}_\mathfrak A B\subseteq \mathbb B(X)$, and regards $\varphi$ as an strict c.p. map $\varphi: A \to \mathbb B(X)$. Identifying $B$ and $C$ with $1_C\otimes B$ and $C\otimes 1_B$, we may further assume that $B\subseteq \varphi(A)^{'}\subseteq \mathbb B(X)$. Let $\pi_A: A\to \mathbb B(X_\varphi)$ be the corresponding Kasparov-Stinespring dilation of $\varphi$ and note that $\pi_A$ and id$_B$ have commuting ranges. Use universality to get $\pi_A\otimes{\rm id}^{op}: A\otimes_{max} B^{op}\to  C\otimes_{max} B^{op}$, and lift it to a map (again denoted by the same notation) between the corresponding module max tensor products,  put
$$\varphi\otimes_{max} {\rm id}_B^{op}:= (v_\varphi\otimes {\rm id})^{*}(\pi_A\otimes{\rm id}^{op})(\cdot)(v_\varphi\otimes {\rm id}),$$
and argue as in the min case to get the inequality for the norms. Reduce the general unital case to the above case by putting $\varphi\otimes_{max} \psi^{op}:=(\varphi\otimes_{max} {\rm id}_D^{op})({\rm id}_A\otimes\psi^{op})$. Finally, reduce to the unital case by Proposition \ref{unital} and the fact that u.c.p. maps are automatically strict c.p.
\end{proof}

Note that the above result holds also for c.b. module maps for the minimal norm (but not for the maximal norm), and it fails for bounded or positive module maps. Let $tr_n: \mathbb M_n(\mathfrak A)\to \mathbb M_n(\mathfrak A); [\alpha_{ij}]\mapsto [\alpha_ij]^{tr}:=[\alpha_{ji}^{*}]$ be the transpose map and $id_n: : \mathbb M_n(\mathfrak A)\to \mathbb M_n(\mathfrak A)$ be the identity map. Then
$tr_n$ is a positive isometry, and it is unital when $\mathfrak A$ is unital. We have $\|tr_n\|=1$ and $\|tr_n\otimes id_n\|\geq n$ (for the last inequality, assume that $\mathfrak A$ is unital. Let $\{e_{ij}\}$ be a system of matrix units for $\mathbb M_n(\mathfrak A)$, where $e_{ij}$ is a matrix with $1_{\mathfrak A}$ at the $(i,j)$-th position and $0$ elsewhere. Consider the unitary element $u=\sum_{i,j} e_{ij}\otimes e_{ji}\in  \mathbb M_n(\mathfrak A)\odot\mathbb M_n(\mathfrak A)$, then $(tr_n\otimes id_n)(u)=\sum_{i,j} e_{ji}\otimes e_{ji}$ and $\frac{1}{n}\sum_{i,j} e_{ji}\otimes e_{ji}$ is a projection in $ \mathbb M_n(\mathfrak A)\odot\mathbb M_n(\mathfrak A)$.) Now for $A=\mathbb K(\ell^2\otimes \mathfrak A)\oplus \mathfrak A$, the transpose map $tr: A\to A$ is a positive isometry with $tr\otimes id: A\otimes_{\mathfrak A}^{min} A\to A\otimes_{\mathfrak A}^{min} A$ unbounded (unless $\mathfrak A=0!$)

Next, we turn to the problem of inclusions. This is handled rather easily for the $min$ norm: if $A\subseteq B$ and $C\subseteq D$ are $C^*$-subalgebras and submodules, we have $A\odot C^{op}\subseteq B\odot D^{op}$ and the same holds for the corresponding ideals defining the algebraic module tensor products, hence $A\odot_{\mathfrak A} C^{op}\subseteq B\odot_{\mathfrak A} D^{op}$. Now by Lemma \ref{min} (and the remarks before that) the restriction of the $min$ norm on the bigger algebra to the smaller one is the same as the original $min$ norm on the latter, therefore, $A\otimes_{\mathfrak A}^{min} C^{op}\subseteq B\otimes_{\mathfrak A}^{min} D^{op}$. For the $max$ norm such a general inclusion result fails, however we have it under certain natural extension conditions. We say that the inclusion $A\subseteq B$ is $\mathfrak A$-{\it extendable} if for every  Hilbert space $H$ and each non degenerate $*$-homomorphism and module map $\pi: A\to \mathbb B(H\otimes \mathfrak A)$, there is a c.c.p. admissible map $\varphi: B\to \pi(A)^{''}$ extending $\pi$. Since $\pi$ could always be replaced by a faithful representation with the same range as $\pi$, it is enough to check  $\mathfrak A$-extendability it is enough to check the above condition just for faithful representations.    

\begin{proposition}\label{incl}
Let $B$ and $D$ be separable $C^*$-algebras and right $\mathfrak A$-modules with compatible actions, and $A\subseteq B$ and $C\subseteq D$ are $C^*$-subalgebras and submodules. Then

$(i)$ \ $A\otimes_{\mathfrak A}^{min} C^{op}\subseteq B\otimes_{\mathfrak A}^{min} D^{op}$,

$(ii)$ \ $A\otimes_{\mathfrak A}^{max} C^{op}\subseteq B\otimes_{\mathfrak A}^{max} D^{op}$, whenever both inclusions $A\subseteq B$ and $C\subseteq D$ are $\mathfrak A$-extendable.
\end{proposition}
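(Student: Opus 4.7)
The plan for part $(i)$ is to leverage Lemma~\ref{min} directly. Fix faithful $*$-representations and module maps $\pi_B:B\to\mathbb B(H)$ and $\sigma_D:D\to\mathbb B(K)$ in $\mathfrak A$-Hilbert spaces; their restrictions to $A$ and $C$ are again faithful module $*$-representations, so by Lemma~\ref{min} the $\min$-norm on $A\odot_{\mathfrak A}C^{op}$ may be computed through $\overline{\pi_B|_A}\otimes (\sigma_D|_C)^{op}$. But this operator is just the restriction of $\bar\pi_B\otimes\sigma_D^{op}$ to the subalgebra $A\odot_{\mathfrak A}C^{op}\subseteq B\odot_{\mathfrak A}D^{op}$, so the two $\min$-norms agree on the common algebraic domain, and the isometric inclusion passes to the completions.

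For part $(ii)$ I would follow the classical two-extension trick (compare \cite[Prop.~3.6.2]{bo}) adapted to the module setting. It is enough to show $\|u\|_{\max,A\otimes C^{op}}\le\|u\|_{\max,B\otimes D^{op}}$ for every $u\in A\odot_{\mathfrak A}C^{op}$, since the reverse inequality is automatic for $*$-homomorphisms between $C^{*}$-algebras. Given any $\Pi\in Hom_{\mathfrak A}(A\odot_{\mathfrak A}C^{op},\mathbb B(H))$, lift along the quotient map to a $*$-homomorphism $\tilde\Pi:A\odot C^{op}\to\mathbb B(H)$; cutting by the essential projection, one may assume $\tilde\Pi$ is non-degenerate, so that it splits as a pair of commuting module $*$-representations $\pi:A\to\mathbb B(H)$ and $\sigma:C^{op}\to\mathbb B(H)$. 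The $\mathfrak A$-extendability of the two inclusions then produces c.c.p.\ module extensions $\tilde\pi:B\to\pi(A)''$ and $\tilde\sigma:D^{op}\to\sigma(C^{op})''$. Since $\pi(A)$ and $\sigma(C^{op})$ commute, so do their bicommutants, and the standard lemma on products of c.c.p.\ maps with mutually commuting ranges guarantees that $\Phi(b\otimes d):=\tilde\pi(b)\tilde\sigma(d)$ defines a c.c.p.\ map $\Phi:B\odot D^{op}\to\mathbb B(H)$.

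The next task is to check that $\Phi$ descends modulo the ideal $I_{\mathfrak A}$, i.e.\ that $\Phi(b\cdot\alpha\otimes d)=\Phi(b\otimes d\cdot\alpha)$. Using that $\tilde\pi$ and $\tilde\sigma$ are module maps, this reduces to the identity $(\tilde\pi(b)\cdot\alpha)\tilde\sigma(d)=\tilde\pi(b)(\tilde\sigma(d)\cdot\alpha)$ in $\mathbb B(H)$, which in turn follows from the fact that $\tilde\sigma(d)\in\sigma(C^{op})''\subseteq\pi(A)'$ commutes with the $\mathfrak A$-action inherited through $\pi$. Once established, $\Phi$ is a c.c.p.\ module map on $B\odot_{\mathfrak A}D^{op}$, extends to the max-completion with operator norm at most one, and agrees with $\Pi$ on $A\odot_{\mathfrak A}C^{op}$; hence $\|\Pi(u)\|=\|\Phi(u)\|\le\|u\|_{\max,B\otimes D^{op}}$, and taking the supremum over $\Pi$ finishes the argument.

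The hard part, as I see it, will be precisely this descent step in $(ii)$. In the non-module setting commuting ranges alone suffice, but here one must additionally reconcile the module actions on $\mathbb B(H)$ (defined through the given left $\mathfrak A$-action on $H$) with how $\tilde\pi$ and $\tilde\sigma$ sit inside $\mathbb B(H)$. The crux is that $\tilde\sigma$ lands in the commutant of $\pi(A)$, which implicitly absorbs the $\mathfrak A$-action; carefully verifying this compatibility is where the $\mathfrak A$-extendability hypothesis does its real work, and it is what genuinely separates the module version from its classical counterpart.
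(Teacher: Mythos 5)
Part $(i)$ is exactly the paper's argument: the algebraic inclusion of the quotients together with the representation-independence of the min norm (Lemma~\ref{min}) is how the paper disposes of it in the paragraph preceding the proposition, so nothing to add there. For part $(ii)$ you take a genuinely different, and essentially correct, route. The paper first reduces to the one-sided case $C=D$ (the two-sided statement then follows by composing $A\otimes_{\mathfrak A}^{max}C^{op}\subseteq B\otimes_{\mathfrak A}^{max}C^{op}\subseteq B\otimes_{\mathfrak A}^{max}D^{op}$) and extends only the leg $\pi_A$ to a c.c.p.\ module map $\varphi: B\to\pi_A(A)''$; the other leg stays a genuine $*$-homomorphism, so the product map is built from two commuting \emph{inclusions} $\pi_A(A)''\hookrightarrow\mathbb B(H)$ and $\pi_C(C)\hookrightarrow\mathbb B(H)$ via the universal property of the module max norm, and the only c.c.p.\ ingredient is $\varphi\otimes_{max}\pi_C^{op}$, whose continuity is covered by Proposition~\ref{cont}. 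You instead extend both legs at once and invoke a product lemma for \emph{two} c.c.p.\ maps with commuting ranges --- the module analogue of \cite[Theorem 3.5.3]{bo} --- which the paper never proves; this is the one step that needs patching, though the patch is short: apply the classical Theorem 3.5.3 to get a c.c.p.\ map $\Phi$ on the scalar tensor product $B\otimes_{max}D^{op}$, verify $\Phi(I_{\mathfrak A})=0$ by your commutation argument (the precise mechanism: writing $\rho:\mathfrak A\to\mathbb B(H)$ for the structural representation, any $*$-homomorphic module map automatically has range commuting with $\rho(\mathfrak A)$, by the compatibility $(c\cdot\alpha)^*=c^*\cdot\alpha^*$; hence $\rho(\mathfrak A)\subseteq\sigma(C^{op})'$, so $\rho(\mathfrak A)$ commutes with $\tilde\sigma(D^{op})\subseteq\sigma(C^{op})''$, which is exactly the identity $\tilde\pi(b)\rho(\alpha)\tilde\sigma(d)=\tilde\pi(b)\tilde\sigma(d)\rho(\alpha)$ you need), then use continuity to kill $\overline{I_{\mathfrak A}}^{max}$ and invoke Proposition~\ref{iso} to identify the quotient with $B\otimes_{\mathfrak A}^{max}D^{op}$; your final estimate $\|\Pi(u)\|=\|\Phi(u)\|\le\|u\|_{max,B\otimes D^{op}}$ then goes through verbatim. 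So your diagnosis of the crux is accurate --- the bicommutant absorbing the $\mathfrak A$-action is precisely what makes the descent work, and it mirrors the role of $\pi_A(A)''$ in the paper --- and the trade-off is clear: your route buys a one-shot two-sided extension, while the paper's buys freedom from any c.c.p.-times-c.c.p.\ product lemma. Two small points you should make explicit: $\mathfrak A$-extendability is stated for non-degenerate module representations, so after cutting to the essential subspaces you must note the cut-down space is still an $\mathfrak A$-Hilbert space (it is, since the essential projections commute with $\rho(\mathfrak A)$), and you apply extendability to $C^{op}\subseteq D^{op}$ rather than $C\subseteq D$, which deserves a sentence.
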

\begin{proof} Part $(i)$ is checked in the above paragraph. We only need to prove $(ii)$ for the case $C=D$. By universality, there is a canonical $*$-homomorphism and module map  $\psi:  A\otimes_{\mathfrak A}^{max} C^{op}\to B\otimes_{\mathfrak A}^{max} C^{op}$; we need to check that $\psi$ is faithful. Let $q: A\odot C^{op}\to A\odot_{\mathfrak A} C^{op}=(A\odot C^{op})/ I_{\mathfrak A}$ be the quotient map. Take a faithful non degenerate representation and module map $\pi: A\otimes_{\mathfrak A}^{max} C^{op}\to \mathbb B(H\otimes \mathfrak A)$, for some  Hilbert  space  $H$, and as in \cite[3.2.6]{bo}, find representations $\pi_A: A\to \mathbb B(H_\mathfrak A)$ and  $\pi_C: C\to \mathbb B(H_\mathfrak A)$ with $\pi\circ q=\pi_A\times \pi_C^{op}$ and observe that these representations could be chosen to be module maps as well (by a careful inspection of the proof of the last citation). Now $\pi_C(C)\subseteq \pi_A(A)^{'}$ and since $(\pi_A\times \pi_C^{op})(I_{\mathfrak A})=0$, we have
$$ \pi_A(a\cdot\alpha) \pi_C(c)=\pi_A(a) \pi_C(c\cdot\alpha)\ \ (\alpha\in \mathfrak A, a\in A, c\in C).$$
The commuting inclusions $\pi_C(C)\hookrightarrow \mathbb B(H_\mathfrak A)$ and $\pi_A(A)^{''}\hookrightarrow \mathbb B(H_\mathfrak A)$ give a representation and module map  $\Pi: \pi_A(A)^{''}\odot \pi_C^{op}(C^{op})\to \mathbb B(H_\mathfrak A)$. The above relation between $\pi_A$ and $\pi_C$ shows that $\Pi$ vanishes on $I_{\mathfrak A}$ and so, by universality, it extends to a representation and module map  $\tilde\Pi: \pi_A(A)^{''}\otimes_{\mathfrak A}^{max} \pi_C^{op}(C^{op})\to \mathbb B(H_\mathfrak A)$. Finally extend $\pi_A$ to a c.c.p. admissible map $\varphi: B\to \pi_A(A)^{''}$ and observe that $\tilde\Pi\circ(\varphi\otimes_{max}\pi_C^{op})\circ\psi=\pi$. Therefore, $\psi$ is faithful as $\pi$ is so.
\end{proof}

\begin{remark} In part $(ii)$ of the above inclusion result, we may replace the condition that $A\subseteq B$ is $\mathfrak A$-extendable with any of the conditions that $\mathfrak A$ is injective and $A$ is $\mathfrak A$-nuclear or that $A$ is a hereditary $C^*$-subalgebra of $B$ (we still need the other inclusion to be $\mathfrak A$-extendable). To see this in the first case, identify $A$ with $\pi(A)$ and use $\mathfrak A$-nuclearity to approximately decompose $\pi_A={\rm id}_A$ via  c.c.p. admissible maps $\varphi_n: A\to \MM$ and $\psi_n:\MM\to \pi_A(A)$, in the point norm topology. Use injectivity of $\mathfrak A$ to extend $\varphi_n$ to a c.c.p. module map $\tilde\varphi_n$ on $B$ and choose $\varphi: B\to \pi_A(A)^{''}\subseteq \mathbb B(H_\mathfrak A)$ to be a point-ultraweak cluster point of the sequence $\pi_A\circ\psi_n\circ\tilde\varphi_n$, and continue the proof of part $(ii)$ as above. In the second case, let $\{e_i\}$ be a bounded approximate identity of $A$ and consider the admissible map ad$_{e_i}:B\to A; \ b\mapsto e_ibe_i$. As in the previous case, for a point-ultraweak cluster point $\varphi: B\to \pi_A(A)^{''}\subseteq \mathbb B(H_\mathfrak A)$ of the net  $\pi_A\circ$ad$_{e_i}$, one could proceed as before.
\end{remark}

Next, we want to prove a module version of so called ''the trick", used frequently in the literature of finite dimensional approximation \cite[3.6.5]{bo}. But first we need the following module version of the ``Arveson extension theorem''.  

\begin{lemma}[Extension of module maps] \label{ext}
	Let $\mathfrak A$ be a unital injective $C^*$-algebra and $A$ be a $C^*$-algebra and right $\mathfrak A$-module with compatible actions, and $E\subseteq A$ be an operator subsystem  and submodule. Assume that the algebra $\mathbb B(H\otimes \mathfrak A)$ of adjointable operators is a von Neumann algebra for a Hilbet space $H$. Then each c.c.p. module map $\phi: E\to \mathbb B(H\otimes \mathfrak A)$ extends to  a c.c.p. module map  $\tilde\phi: A\to \mathbb B(H\otimes \mathfrak A)$. When $\phi$ is u.c.p., the extension $\tilde\phi$ could also be chosen to be u.c.p.
\end{lemma}
	\begin{proof} By Lemma \ref{fr}, there is a net $(q_i)$ of finite rank projections, which SOT-increase to $1$ and $q_i\mathbb B(H_\mathfrak A)q_i\simeq \mathbb M_{n_i}(\mathfrak A)$. Then it follows from the proof of Lemma \ref{fr}, that the maps $\phi_i: E\to \mathbb M_{n_i}(\mathfrak A); \ x\mapsto q_i\phi(x)q_i$ are c.c.p. module maps. Since $\mathbb M_{n_i}(\mathfrak A)$ is an injective $\mathfrak A$-module, each $\phi_i$ has a c.c.p. module map extension $\tilde\phi_i: A\to \mathbb M_{n_i}(\mathfrak A)$, which could be regarded as a net of maps into the von Neumann algebra $B(H_\mathfrak A)$. Use \cite[1.3.7]{bo} to get a cluster point $\tilde \phi$ in the point-utraweak-topology, which is then the required extension of $\phi$. The last assertion is immediate from the proof. 
	\end{proof}

\begin{lemma}[The Trick] \label{trick}
Let $\mathfrak A$ be an injective $C^*$-algebra such that $\mathbb B(H_\mathfrak A)$ is a von Neumann algebra, for a Hilbet space $H$. Let $B$ and $C$ be $C^*$-algebras and right $\mathfrak A$-modules with compatible actions, and $A\subseteq B$ be a $C^*$-subalgebra and submodule. Let $\tilde\varrho$ be a module norm on $B\odot_{\mathfrak A} C^{op}$ and $\varrho$ be its restriction to $A\odot_{\mathfrak A} C^{op}$. Let $\pi_A: A\to \mathbb B(H_\mathfrak A)$ and  $\pi_C: C\to \mathbb B(H_\mathfrak A)$ be commuting representations and module maps in a left Hilbert $\mathfrak A$-module $H_\mathfrak A=H\otimes \mathfrak A$, satisfying
$$ \pi_A(a\cdot\alpha) \pi_C(c)=\pi_A(a) \pi_C(c\cdot\alpha),$$
for each $\alpha\in \mathfrak A, a\in A, c\in C,$  such that (the lift of) the corresponding product representation $\pi_A\times\pi_C: A\odot_{\mathfrak A} C^{op}\to \mathbb B(H_\mathfrak A)$ is $\varrho$-continuous, then $\pi_A$ has a c.c.p. module map extension $\varphi: B\to \pi_C(C)^{'}$.
\end{lemma}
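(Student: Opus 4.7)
The approach I would take is to lift the product representation to a u.c.p.\ module map on the larger module tensor product $B \otimes_{\mathfrak{A}}^{\tilde\varrho} C^{op}$, then restrict to $B \otimes 1_C$ and invoke the multiplicative-domain trick. First I would reduce to the unital setting using the unitization procedure discussed before Proposition~\ref{iso} (extending $\tilde\varrho$ to unitizations and checking that the compatibility and $\varrho$-continuity survive).

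By the $\varrho$-continuity hypothesis, $\pi_A \times \pi_C$ lifts to a $*$-representation $\sigma \colon A \otimes_{\mathfrak{A}}^{\varrho} C^{op} \to \mathbb{B}(H)$; a direct check using the compatibility $\pi_A(a\cdot\alpha)\pi_C(c) = \pi_A(a)\pi_C(c\cdot\alpha)$ together with the formula $(T\cdot\alpha)\xi = T(\alpha\xi)$ for the right $\mathfrak{A}$-action on $\mathbb{B}(H)$ shows that $\sigma$ is a right $\mathfrak{A}$-module map. Since $\varrho = \tilde\varrho|_{A \odot_{\mathfrak{A}} C^{op}}$, the natural inclusion $A \otimes_{\mathfrak{A}}^{\varrho} C^{op} \hookrightarrow B \otimes_{\mathfrak{A}}^{\tilde\varrho} C^{op}$ is isometric. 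I would then invoke the module version of Arveson's extension theorem, available via the module Stinespring theorem \cite[Theorem~3.4]{am} together with the $\mathfrak{A}$-injectivity of $\mathbb{B}(H)$, to extend $\sigma$ to a u.c.p.\ module map $\tilde\sigma \colon B \otimes_{\mathfrak{A}}^{\tilde\varrho} C^{op} \to \mathbb{B}(H)$, and set $\varphi(b) := \tilde\sigma(b \otimes 1_C)$. This is a u.c.p.\ module map that extends $\pi_A$.

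For the containment $\varphi(B) \subseteq \pi_C(C)'$, I would use the standard multiplicative-domain argument. Since $\tilde\sigma|_{A \otimes_{\mathfrak{A}}^{\varrho} C^{op}} = \sigma$ is a $*$-homomorphism, the whole subalgebra $A \otimes_{\mathfrak{A}}^{\varrho} C^{op}$, and in particular every element $1_A \otimes c$ with $c \in C$, lies in the multiplicative domain of $\tilde\sigma$. Because of the opposite multiplication on $C^{op}$, one has $(b \otimes 1)(1 \otimes c) = b \otimes c = (1 \otimes c)(b \otimes 1)$ in $B \odot_{\mathfrak{A}} C^{op}$, and so
$$\varphi(b)\pi_C(c) = \tilde\sigma(b \otimes 1)\,\tilde\sigma(1 \otimes c) = \tilde\sigma(b \otimes c) = \tilde\sigma(1 \otimes c)\,\tilde\sigma(b \otimes 1) = \pi_C(c)\varphi(b),$$
as required.

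The main obstacle I anticipate is the module-valued Arveson extension step: one needs to guarantee that the u.c.p.\ module map $\sigma$ valued in $\mathbb{B}(H)$ actually extends to a u.c.p.\ module map on the larger algebra $B \otimes_{\mathfrak{A}}^{\tilde\varrho} C^{op}$. This is precisely an injectivity property for $\mathbb{B}(H)$ in the category of right $\mathfrak{A}$-$C^*$-modules, which I expect to follow from the Stinespring/Arveson machinery of \cite[Theorem~3.4, Corollary~3.9]{am}. A secondary technical matter is the bookkeeping around the non-unital case, which should be handled by the unitization argument already indicated in the paragraph preceding Proposition~\ref{iso}.
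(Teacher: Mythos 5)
Your proposal is correct and follows essentially the same route as the paper's proof: lift $\pi_A\times\pi_C$ to the $\varrho$-completion, use the inclusion $A\otimes_{\mathfrak A}^{\varrho}C^{op}\subseteq B\otimes_{\mathfrak A}^{\tilde\varrho}C^{op}$ and the module Arveson extension theorem of \cite{am} to get a u.c.p.\ module map $\Phi$ on $B\otimes_{\mathfrak A}^{\tilde\varrho}C^{op}$, define $\varphi=\Phi(\cdot\otimes 1_C)$, and conclude $\varphi(B)\subseteq\pi_C(C)'$ by the multiplicative-domain argument applied to $1_B\otimes C^{op}$. Your handling of the non-unital cases (unitization via the discussion preceding Proposition~\ref{iso}, and passing to $\mathfrak A\oplus\mathbb C$ when $\mathfrak A$ is not unital) is exactly the paper's reduction as well.
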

\begin{proof}
First we assume that $\mathfrak A$ is unital and $A, B$ and $C$ are unital algebras and neo-unital modules with $1_A=1_B$. For the  extension $\pi_A\times_{\varrho}\pi_C: A\otimes_{\mathfrak A}^{\varrho} C^{op}\to \mathbb B(H_\mathfrak A)$, since $A\otimes_{\mathfrak A}^{\varrho} C^{op}\subseteq B\otimes_{\mathfrak A}^{\tilde\varrho} C^{op}$, by the unital case of Lemma \ref{ext}, there is a u.c.p. module map $\Phi: B\otimes_{\mathfrak A}^{\tilde\varrho} C^{op}\to \mathbb B(H_\mathfrak A)$, extending $\pi_A\times_{\varrho}\pi_C$. Put $\varphi=\Phi(\cdot\otimes 1_C)$. Then $\Phi(1_B\otimes c)=\pi_C(c)$, for each $c\in C$, hence $\mathbb C1_B\otimes C^{op}$ lies in the multiplicative domain of $\Phi$. Therefore,
\begin{align*}
\varphi(b)\pi_C(c)&=\Phi(b\otimes 1_C)\Phi(1_B\otimes c)\\
 &=\Phi\big((b\otimes 1_C)(1_B\otimes c)\big)\\
 &=\Phi\big((1_B\otimes c)(b\otimes 1_C)\big)\\
 &=\Phi(1_B\otimes c)\Phi(b\otimes 1_C)\\
&= \pi_C(c)\varphi(b),
\end{align*}
for each $b\in B, c\in C$, that is, $\varphi(B)\subseteq\pi_C(C)^{'}$.

When $\mathfrak A$ is unital, but $A, B$ and $C$ are not unital, by the discussion before Proposition \ref{iso} on extending norms on unitizations, one could deduce the result from the unital case  (since representations on ideals extend to representations of the algebra and the extension remains a module map if the original map is so).  Finally, one could extend the module actions of $\mathfrak A$ to that of $\mathfrak A\oplus \mathbb C$ to handle the case where $\mathfrak A$ is not unital.
\end{proof}

We say that an inclusion $A\subseteq B$ is $\mathfrak A$-{\it weakly injective} if there is a c.c.p. admissible map $\varphi: B\to A^{**}$ extending the identity on $A$. As the first application of the trick, we show the following result.

\begin{proposition}\label{winj}
For an inclusion $A\subseteq B$, the following are equivalent:

$(i)$ the inclusion is $\mathfrak A$-extendable,

$(ii)$ the inclusion is $\mathfrak A$-weakly injective,

$(iii)$ for every  $C^*$-algebra $C$ and right $\mathfrak A$-module with compatible action, we have the canonical inclusion  $A\otimes_{\mathfrak A}^{max} C^{op}\subseteq B\otimes_{\mathfrak A}^{max} C^{op}$.
\end{proposition}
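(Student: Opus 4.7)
The plan is to prove the three equivalences by closing the cycle $(ii)\Rightarrow (i)\Rightarrow (iii)\Rightarrow (ii)$, with the last implication carrying the real work.

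For $(ii)\Rightarrow (i)$, given a c.c.p. module extension $\varphi_0:B\to A^{**}$ of $\mathrm{id}_A$ and any non-degenerate module $*$-representation $\pi:A\to \mathbb B(H)$ on an $\mathfrak A$-Hilbert space, I would extend $\pi$ normally to $\tilde\pi:A^{**}\to \pi(A)''$ via the universal property of the bidual (module-linearity coming from the duality definition of the $\mathfrak A$-action on $A^{**}$ recalled before Theorem~\ref{sd}); then $\tilde\pi\circ\varphi_0:B\to \pi(A)''$ is the required c.c.p. module extension of $\pi$. For $(i)\Rightarrow (iii)$, the trivial inclusion $C\subseteq C$ is tautologically $\mathfrak A$-extendable, so Proposition~\ref{incl}$(ii)$ applied to the pair of inclusions $A\subseteq B$ and $C\subseteq C$ at once yields $A\otimes_{\mathfrak A}^{max} C^{op}\subseteq B\otimes_{\mathfrak A}^{max} C^{op}$.

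For the decisive implication $(iii)\Rightarrow (ii)$, I would choose a module representation $\pi:A\to \mathbb B(H)$ on an $\mathfrak A$-Hilbert space large enough that $\pi(A)''$ can be identified with $A^{**}$ — a module analogue of the universal enveloping von Neumann algebra, assembled from GNS representations over a sufficient family of $\mathfrak A$-compatible states using the module representation theory of \cite{am}. Take $C$ to be the relative commutant of $\pi(A)$ and of the image of $\mathfrak A$ inside $\mathbb B(H)$, equipped with the right $\mathfrak A$-action inherited from $\mathbb B(H)$. The pair $(\pi,\iota_C)$, with $\iota_C:C\hookrightarrow\mathbb B(H)$ the inclusion, then consists of commuting $*$-representations satisfying the module compatibility condition required by the Trick (Lemma~\ref{trick}), and the product $\pi\times\iota_C:A\odot_{\mathfrak A}C^{op}\to\mathbb B(H)$ is $\|\cdot\|_{max}$-continuous by universality. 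Hypothesis $(iii)$ says that this max norm on $A\odot_{\mathfrak A}C^{op}$ is the restriction of the max module norm on $B\odot_{\mathfrak A}C^{op}$, so the hypotheses of the Trick are met and produce a c.c.p. module extension $\varphi:B\to \iota_C(C)'=\pi(A)''\cong A^{**}$ of $\pi$, which is exactly the $\mathfrak A$-weak injectivity of $A\subseteq B$.

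The main obstacle I expect is the simultaneous balancing act in the setup of the last step: arranging $\pi$ so that $\pi(A)''\cong A^{**}$ (a genuinely module universal enveloping von Neumann algebra) while choosing $C$ large enough that $C'$ coincides with $\pi(A)''$ and yet small enough that the pair $(\pi,\iota_C)$ still verifies the module compatibility condition $\pi(a\cdot\alpha)\iota_C(c)=\pi(a)\iota_C(c\cdot\alpha)$ demanded by the Trick. This interplay between the $\mathfrak A$-action, the $*$-representation, and the double commutant is automatic in the classical non-module setting, but in the module category it requires careful bookkeeping through the module GNS and bicommutant techniques of \cite{am}.
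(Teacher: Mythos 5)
Your proposal is correct and takes essentially the same route as the paper: $(ii)\Rightarrow(i)$ by normally extending a representation to $A^{**}$ and composing, $(i)\Rightarrow(iii)$ via Proposition~\ref{incl}$(ii)$ with the tautologically $\mathfrak A$-extendable inclusion $C\subseteq C$, and the Trick (Lemma~\ref{trick}) applied to $C=\pi(A)'$ for the remaining implication. The only cosmetic difference is that the paper applies the Trick to an \emph{arbitrary} module representation $\pi$ to get $(iii)\Rightarrow(i)$ directly, whereas you apply it to the module universal representation to land in $(ii)$ --- and there your ``balancing act'' worry dissolves, since with the $\mathfrak A$-structure the paper itself uses in Lemma~\ref{wep}, namely $\alpha\mapsto\pi_U(\alpha\cdot 1_A)$, one has $\rho(\mathfrak A)\subseteq\pi_U(A)$, so $C=\pi_U(A)'$ automatically commutes with $\rho(\mathfrak A)$ and $C'=\pi_U(A)''\cong A^{**}$.
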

\begin{proof}
Given a non degenerate faithful module representation $\pi: A\to \mathbb B(H_\mathfrak A),$ we may identify $\pi(A)$ with $A$. Choose a central projection $z\in A^{**}$ such that $A^{''}=zA^{**}z$ and $zaz=a$, for $a\in A$ \cite[3.7.7]{ped}. Now if the inclusion is $\mathfrak A$-weakly injective and $\varphi: B\to A^{**}$ is a c.c.p. admissible map  extending the identity on $A$, ad$_z\circ\varphi: B\to A^{''}$ is a c.c.p.  admissible. extension of id$_A$, that is,     $(ii)$ implies $(i)$. The converse implication is even easier as a copy of $A^{''}$ is in $A^{**}$. Also $(i)$ implies $(iii)$ by Proposition \ref{incl}. Finally, $(iii)$ implies $(i)$ by the trick applied to $C=\pi(A)^{'}$ and commuting representations $\pi: A\to \mathbb B(H_\mathfrak A)$ and $\iota: C\hookrightarrow \mathbb B(H_\mathfrak A)$.
\end{proof}

We say that $A$ has $\mathfrak A$-{\it weak expectation property} (or simply $A$ has $\mathfrak A$-WEP) if for every  module inclusion $A^{**}\subseteq \mathbb B(H_\mathfrak A)$, for a Hilbert $\mathfrak A$-module of the form $H_\mathfrak A$ with $H$  a Hilbert space, there is a u.c.p. admissible map $\varphi: \mathbb B(H_\mathfrak A)\to A^{**}$ extending the identity on $A$. The classical WEP is due to Christopher Lance \cite{lan2}. The next lemma shows that $\mathfrak A$-WEP is independent of the choice of the faithful representation.

\begin{lemma}\label{wep}
Assume that $\mathfrak A$ is unital. The following are equivalent:

$(i)$ $A$ has $\mathfrak A$-WEP,

$(ii)$ for each inclusion $A\subseteq B$ (as a subalgebra and submodule) and each $C^*$-algebra and right $\mathfrak A$-module $C$  with compatible action, we have the canonical inclusion  $A\otimes_{\mathfrak A}^{max} C^{op}\subseteq B\otimes_{\mathfrak A}^{max} C^{op}$.
\end{lemma}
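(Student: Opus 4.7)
My plan is to reduce both implications of Lemma \ref{wep} to Proposition \ref{winj}, which equates $\mathfrak A$-extendability (equivalently, $\mathfrak A$-weak injectivity) of a single inclusion $A \subseteq B$ with preservation of that inclusion under $-\otimes_{\mathfrak A}^{max} C^{op}$ for every $C$. Morally, $\mathfrak A$-WEP asserts that $A$ is $\mathfrak A$-weakly injective inside every enveloping $\mathfrak A$-$C^*$-module, so Lemma \ref{wep} amounts to routing the two universal quantifiers ``$\forall B$'' and ``$\forall C$'' through Proposition \ref{winj}.

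For $(i) \Rightarrow (ii)$ I would start with an inclusion $A \subseteq B$ and a $C^*$-module $C$. Choose a faithful non-degenerate module representation $B \hookrightarrow \mathbb B(H)$ on some $\mathfrak A$-Hilbert space $H$; the restricted faithful module representation $A \hookrightarrow \mathbb B(H)$ together with $\mathfrak A$-WEP yields a u.c.p.\ module map $\Phi : \mathbb B(H) \to A^{**}$ extending the identity on $A$, and the restriction $\Phi|_B : B \to A^{**}$ is then a c.c.p.\ module extension of $\mathrm{id}_A$. So $A \subseteq B$ is $\mathfrak A$-weakly injective, and the implication $(ii) \Rightarrow (iii)$ of Proposition \ref{winj} delivers the canonical inclusion $A \otimes_{\mathfrak A}^{max} C^{op} \subseteq B \otimes_{\mathfrak A}^{max} C^{op}$.

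For $(ii) \Rightarrow (i)$ I would start with any faithful module representation $A \subseteq \mathbb B(H)$ on an $\mathfrak A$-Hilbert space and specialize hypothesis $(ii)$ to $B := \mathbb B(H)$, obtaining the canonical inclusion $A \otimes_{\mathfrak A}^{max} C^{op} \subseteq \mathbb B(H) \otimes_{\mathfrak A}^{max} C^{op}$ for every $\mathfrak A$-module $C$. The implication $(iii) \Rightarrow (i)$ of Proposition \ref{winj} (whose proof runs the Trick against $C = \pi(A)'$) then yields $\mathfrak A$-extendability of $A \subseteq \mathbb B(H)$, and the equivalence $(i) \Leftrightarrow (ii)$ of that proposition produces a c.c.p.\ module map $\mathbb B(H) \to A^{**}$ extending $\mathrm{id}_A$. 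Because the argument runs for every faithful representation, it simultaneously verifies $\mathfrak A$-WEP and the independence from the representation announced in the remark preceding the lemma. A compression/state-averaging argument in the spirit of Lemma \ref{injc} (using that $\mathfrak A$ is unital, and passing to a unitization when $A$ is not) upgrades the c.c.p.\ extension to a u.c.p.\ one, closing the proof.

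The step I expect to cost most care is hidden inside Proposition \ref{winj} rather than in the assembly above: the Trick applied to $C = \pi(A)'$ requires the commutant to inherit a right $\mathfrak A$-module structure making $\pi(a \cdot \alpha)c = \pi(a)(c \cdot \alpha)$ hold for $c \in C$, which constrains how the $\mathfrak A$-action on $\mathbb B(H)$ restricts to $\pi(A)'$. Granted this technicality (essentially the centrality observation on the $\mathbb B(H)$-action noted in the introductory section), the lemma follows by the routing described above.
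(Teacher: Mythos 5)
Your proposal is correct, and its overall skeleton---reducing both directions to Proposition \ref{winj}---matches the paper's, but the mechanics differ in both directions. For $(i)\Rightarrow(ii)$ the paper fixes a faithful module representation $A\subseteq\mathbb B(H)$, uses the module Arveson extension theorem to extend the inclusion to a c.c.p.\ module map $\psi:B\to\mathbb B(H)$, and feeds $\varphi\circ\psi$ into Proposition \ref{winj}; you instead represent $B$ faithfully, restrict to $A$, and exploit the quantifier ``for every faithful representation'' in the definition of $\mathfrak A$-WEP to restrict the WEP map to $B$. Your version dispenses with the Arveson step at no cost, since the quantifier is part of the definition. For $(ii)\Rightarrow(i)$ the paper does \emph{not} route through Proposition \ref{winj}: it applies the Trick (Lemma \ref{trick}) directly with $B=\mathbb B(H_U)$ and $C=(A^{**})'$ in the universal representation, where the $\mathfrak A$-Hilbert space structure is built by hand via $\alpha\mapsto\pi_U(\alpha\cdot 1_A)$, precisely so that $\mathfrak A$ acts through $\pi_U(A)$, commutes with $(A^{**})'$, and the Trick's intertwining hypothesis is checkable; the non-unital case is handled via $A\oplus\mathfrak A$, as you also propose. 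Your detour through Proposition \ref{winj} at an \emph{arbitrary} faithful representation buys representation-independence of $\mathfrak A$-WEP as an immediate byproduct, but it inherits the module-compatibility of $\pi(A)'$ at every representation---the technicality you rightly flag; discharging it onto Proposition \ref{winj} is legitimate within the paper's logic, though note the paper's tailored $H_U$ is exactly the one representation where that compatibility is verifiable by hand, and your u.c.p.\ upgrade is automatic whenever $A$ is unital and nondegenerately represented, so the Lemma \ref{injc}-style averaging is needed only in the non-unital case.
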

\begin{proof}
If $(i)$ holds with a u.c.p. admissible map $\varphi: \mathbb B(H_\mathfrak A)\to A^{**}$, extending the identity on $A$, for the inclusion $A\subseteq B$, using the Arveson extension type theorem (Lemma \ref{ext}) there is a c.c.p. module map $\psi: B\to \mathbb B(H_\mathfrak A)$. Now Proposition \ref{winj} applied to $\varphi\circ\psi$ gives $(ii)$. When $A$ is unital, the converse follows from the trick applied to $B=\mathbb B(H_\mathfrak A)$ and $C=(A^{**})^{'}\subseteq \mathbb B(H_\mathfrak A)$. The non unital case follows by replacing $A$ with $A\oplus \mathfrak A$ and noting that $(A\oplus \mathfrak A)^{**}=A^{**}\oplus \mathfrak A^{**}$.
\end{proof}

\begin{proposition}\label{nuclear}
Let $A$ and $B$ be $C^*$-algebras and right $\mathfrak A$-modules with compatible actions and $\theta: A\to B$ be an $\mathfrak A$-nuclear module map. Then for each $C^*$-algebra and right $\mathfrak A$-module $C$ with compatible action, the map $\theta\otimes_{max}$id$_C: A\otimes_{\mathfrak A}^{max} C^{op}\to  B\otimes_{\mathfrak A}^{max} C^{op}$ factors through $ A\otimes_{\mathfrak A}^{min} C^{op}$.
\end{proposition}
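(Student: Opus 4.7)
The plan is to adapt the classical proof of \cite[3.6.5]{bo} to the module setting. Using the $\mathfrak A$-nuclearity of $\theta$, fix c.c.p.\ module maps $\varphi_n: A \to \mathbb M_{k_n}(\mathfrak A)$ and $\psi_n: \mathbb M_{k_n}(\mathfrak A) \to B$ with $\psi_n \circ \varphi_n \to \theta$ in point-norm. By Proposition \ref{cont}, these extend to c.c.p.\ module maps
\[
\varphi_n \otimes_{min}\mathrm{id}_C : A\otimes_{\mathfrak A}^{min} C^{op} \to \mathbb M_{k_n}(\mathfrak A)\otimes_{\mathfrak A}^{min} C^{op}
\]
and
\[
\psi_n \otimes_{max}\mathrm{id}_C : \mathbb M_{k_n}(\mathfrak A)\otimes_{\mathfrak A}^{max} C^{op} \to B\otimes_{\mathfrak A}^{max} C^{op}.
\]

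The critical ingredient, and what I expect to be the main obstacle, is composing these, which requires showing that the canonical surjection $\mathbb M_{k_n}(\mathfrak A)\otimes_{\mathfrak A}^{max} C^{op} \twoheadrightarrow \mathbb M_{k_n}(\mathfrak A)\otimes_{\mathfrak A}^{min} C^{op}$ from Proposition \ref{tak} is actually an isomorphism --- a module analogue of the fact that matrix algebras have a unique $C^*$-tensor norm. I would approach this by analyzing module $*$-representations $\Pi: \mathbb M_{k_n}(\mathfrak A)\odot_{\mathfrak A} C^{op}\to \mathbb B(H)$ on $\mathfrak A$-Hilbert spaces $H$: the matrix unit structure of $\mathbb M_{k_n}(\mathbb C)\subseteq \mathbb M_{k_n}(\mathfrak A)$ forces $H$ to decompose as $\mathbb C^{k_n}\otimes H_0$, reducing $\Pi$ to a pair of compatible module representations of $\mathfrak A$ and $C^{op}$ on $H_0$; the resulting norm on $\mathbb M_{k_n}(\mathfrak A)\odot_{\mathfrak A} C^{op}$ is then recognized as the min norm, forcing max$=$min. (This step is where the hypotheses on the actions will really be used.)

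Granted the identification, form
\[
\Theta_n := (\psi_n \otimes_{max}\mathrm{id}_C)\circ(\varphi_n \otimes_{min}\mathrm{id}_C): A\otimes_{\mathfrak A}^{min} C^{op} \longrightarrow B\otimes_{\mathfrak A}^{max} C^{op},
\]
a c.c.p.\ module map with $\|\Theta_n\|\leq 1$. On an elementary tensor $\Theta_n(a\otimes c)=(\psi_n\circ\varphi_n)(a)\otimes c$, which converges to $\theta(a)\otimes c$ in the max norm since the latter is a cross-norm by Proposition \ref{tak}. By uniform boundedness and density of $A\odot_{\mathfrak A} C^{op}$ in $A\otimes_{\mathfrak A}^{min} C^{op}$, the sequence $\Theta_n$ converges point-norm to a c.c.p.\ module map $\Theta: A\otimes_{\mathfrak A}^{min} C^{op} \to B\otimes_{\mathfrak A}^{max} C^{op}$. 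Composing with the canonical surjection $q: A\otimes_{\mathfrak A}^{max} C^{op} \to A\otimes_{\mathfrak A}^{min} C^{op}$ of Proposition \ref{tak} and checking on elementary tensors, $\Theta\circ q = \theta\otimes_{max}\mathrm{id}_C$ by density and continuity, yielding the desired factorization.
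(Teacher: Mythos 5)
Your proposal is correct and follows essentially the same route as the paper: the same approximate decomposition, the same composition $(\psi_n\otimes_{max}\mathrm{id}_C)\circ(\varphi_n\otimes_{min}\mathrm{id}_C)$ hinging on the coincidence of the max and min module norms on $\mathbb M_{k_n}(\mathfrak A)\odot_{\mathfrak A}C^{op}$, and the same contractivity-plus-density limiting argument to obtain the factorization through $A\otimes_{\mathfrak A}^{min}C^{op}$. The only cosmetic difference is at the lemma-level step you flagged as the main obstacle: where you sketch a representation-decomposition argument via matrix units, the paper simply asserts uniqueness of the $C^*$-norm on $\mathbb M_{k_n}(\mathfrak A)\odot_{\mathfrak A}C^{op}$, justified (as in the proof of Lemma \ref{min}) by the algebraic identification of this module tensor product with $\mathbb M_{k_n}(C^{op})$, which, being already a $C^*$-algebra, admits a unique $C^*$-norm.
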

\begin{proof}
Take c.c.p. admissible maps $\varphi_n: A\to \MM$ and $\psi_n:\MM\to B$ such that $$\psi_n\circ\varphi_n\to \theta,$$ in the point norm topology. Since there is a unique $C^*$-norm on $\MM\odot_{\mathfrak A} C=\mathbb M_{k(n)}(C)$, it follows that $$\varphi_n\otimes_{max}{\rm id}_C-(\varphi_n\otimes_{max}{\rm id}_C)\circ q\to 0,$$ in the point norm topology, where $q: A\otimes_{\mathfrak A}^{max} C^{op}\to  A\otimes_{\mathfrak A}^{min} C^{op}$ is the canonical quotient module map. Consider
$$\Psi_n:=(\psi_n\otimes_{max}\text{id}_C)\circ(\varphi_n\otimes_{min}\text{id}_C): A\otimes_{\mathfrak A}^{min} C^{op}\to B\otimes_{\mathfrak A}^{max} C^{op},$$
then $\Psi_n\to \theta\otimes$id$_C$ on $A\odot_{\mathfrak A} C^{op}$ in the point norm topology, thus $$\theta\otimes{\rm id}_C: A\odot_{\mathfrak A} C^{op}\to  B\odot_{\mathfrak A} C^{op}$$ is contractive with respect to the $min$ and $max$ module norms on the domain and range, respectively, and so extends to a contractive module map $$\Psi: A\otimes_{\mathfrak A}^{min} C^{op}\to B\otimes_{\mathfrak A}^{max} C^{op},$$ and $\Psi_n-\Psi\to 0$ in the point norm topology, and so $\Psi$ is a c.c.p. admissible map, and moreover we have the factorization $\theta\otimes_{max}$id$_C=\Psi\circ q$.
\end{proof}

\begin{corollary}\label{unique}
If $A$ is an $\mathfrak A$-nuclear, for each $C^*$-algebra and right $\mathfrak A$-module $C$ with compatible action, the max and min module tensor products $A\otimes_{\mathfrak A}^{max} C^{op}$ and $A\otimes_{\mathfrak A}^{min} C^{op}$ are isometrically isomorphic as $C^*$-algebras and modules. In particular, there is a unique module norm on $A\odot_{\mathfrak A} C^{op}$.
\end{corollary}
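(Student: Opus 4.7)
The plan is to obtain the corollary as a near-immediate consequence of Proposition \ref{nuclear} applied with $\theta = \mathrm{id}_A$. Since $A$ is $\mathfrak A$-nuclear, Definition \ref{exa} says precisely that $\mathrm{id}_A$ is an $\mathfrak A$-nuclear module map, so the proposition is applicable.

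First I would fix an arbitrary $C^*$-algebra and right $\mathfrak A$-module $C$ with compatible action, and let $q\colon A\otimes_{\mathfrak A}^{max} C^{op}\to A\otimes_{\mathfrak A}^{min} C^{op}$ be the canonical surjective $*$-homomorphism and module map furnished by Proposition \ref{tak}. Proposition \ref{nuclear} applied to $\mathrm{id}_A$ then produces a c.c.p.\ module map $\Psi\colon A\otimes_{\mathfrak A}^{min} C^{op}\to A\otimes_{\mathfrak A}^{max} C^{op}$ factoring $\mathrm{id}_A\otimes_{max}\mathrm{id}_C$ as $\Psi\circ q$. But $\mathrm{id}_A\otimes_{max}\mathrm{id}_C$ is literally the identity on $A\otimes_{\mathfrak A}^{max} C^{op}$, so $\Psi\circ q=\mathrm{id}$, which forces $q$ to be injective. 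Being already surjective, $q$ is therefore a $*$-isomorphism with inverse $\Psi$, and since both $q$ and $\Psi$ are module maps, the identification $A\otimes_{\mathfrak A}^{max} C^{op}\cong A\otimes_{\mathfrak A}^{min} C^{op}$ holds isometrically as $C^*$-algebras and as $\mathfrak A$-modules.

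For the uniqueness statement, I would suppose $\tilde\varrho$ is any module norm on $A\odot_{\mathfrak A} C^{op}$. By the module version of Takesaki's theorem (Proposition \ref{tak}) one has the pointwise sandwich $\|\cdot\|_{min}\leq\tilde\varrho\leq\|\cdot\|_{max}$, and the previous paragraph identifies the two extremes, forcing $\tilde\varrho=\|\cdot\|_{min}=\|\cdot\|_{max}$. There is essentially no obstacle here: the whole content is packaged inside Proposition \ref{nuclear}, and the only point worth double-checking is that $\Psi$ is a genuine two-sided inverse of $q$ on the algebraic tensor product, which is automatic from $\Psi\circ q=\mathrm{id}$ together with surjectivity of $q$, and then extends by continuity to the full completions.
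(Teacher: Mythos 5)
Your proposal is correct and matches the paper's proof, which is literally the one-liner ``Apply the above proposition to $\theta=\mathrm{id}_A$''; you have simply made explicit the details the paper leaves implicit (that $\mathrm{id}_A\otimes_{max}\mathrm{id}_C=\Psi\circ q$ forces the canonical quotient $q$ to be an isometric $*$-isomorphism, and that uniqueness of the module norm then follows from the Takesaki sandwich of Proposition \ref{tak}). No gaps.
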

 \begin{proof}
Apply the above proposition to $\theta=$id$_A$.
\end{proof}

The above result extends one side of the well-known equivalence of nuclearity of a $C^*$-algebra $A$ with nuclearity of id$_A$. We shall prove the other side later

\begin{corollary}
If $A\subseteq \mathbb B(H_\mathfrak A)$ is an $\mathfrak A$-exact subalgebra and right submodule, then for each $C^*$-algebra and right $\mathfrak A$-module $C$ with compatible action, the restriction of the max module norm on $\mathbb B(H)\odot_{\mathfrak A} C^{op}$ to $A\odot_{\mathfrak A} C^{op}$ is the min module norm.
\end{corollary}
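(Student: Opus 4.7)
The plan is to apply Proposition \ref{nuclear} directly to the inclusion map $\iota: A \hookrightarrow \mathbb B(H)$. Since $A$ is $\mathfrak A$-exact and $A\subseteq \mathbb B(H)$ is a faithful representation, Definition \ref{exa} together with Lemma \ref{nuc}(ii) shows that $\iota$ is itself an $\mathfrak A$-nuclear c.c.p. module map from $A$ into $\mathbb B(H)$.

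First I would invoke Proposition \ref{nuclear} with $\theta=\iota$ and the given $C$: this produces a c.c.p. module map
$$\Psi: A\otimes_{\mathfrak A}^{min} C^{op}\longrightarrow \mathbb B(H)\otimes_{\mathfrak A}^{max} C^{op}$$
satisfying $\iota\otimes_{max}\mathrm{id}_C = \Psi\circ q$, where $q: A\otimes_{\mathfrak A}^{max} C^{op}\to A\otimes_{\mathfrak A}^{min} C^{op}$ is the canonical quotient module map. This is the one genuinely non-routine input; everything else is a direct comparison of norms.

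Next, for $u\in A\odot_{\mathfrak A} C^{op}$, denote by $\|u\|_\star$ the norm computed inside $\mathbb B(H)\otimes_{\mathfrak A}^{max} C^{op}$ via the induced embedding (i.e.\ the image of $u$ under $\iota\otimes_{max}\mathrm{id}_C$). Contractivity of $\Psi$ yields
$$\|u\|_\star=\|\Psi(q(u))\|_{max}\leq\|q(u)\|_{min}=\|u\|_{min},$$
so the restriction of the max module norm to $A\odot_{\mathfrak A} C^{op}$ is dominated by the min module norm. For the reverse inequality, Proposition \ref{incl}(i) identifies the min norm on $A\odot_{\mathfrak A} C^{op}$ with its restriction from $\mathbb B(H)\otimes_{\mathfrak A}^{min} C^{op}$, and Takesaki's inequality $\|\cdot\|_{min}\leq\|\cdot\|_{max}$ inside $\mathbb B(H)\odot_{\mathfrak A} C^{op}$ (Proposition \ref{tak}) gives $\|u\|_{min}\leq\|u\|_\star$. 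Combining the two inequalities completes the proof.

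I do not anticipate any real obstacle here: once Proposition \ref{nuclear} is available, the statement reduces to a two-line sandwich estimate using Takesaki in the ambient algebra $\mathbb B(H)\otimes_{\mathfrak A} C^{op}$ together with the easy $min$-inclusion result. The one point to double-check is the consistent identification, under $\iota\otimes_{max}\mathrm{id}_C$, of a fixed element $u\in A\odot_{\mathfrak A} C^{op}$ across the two tensor product algebras; this is immediate from the construction of the $max$ and $min$ module norms in Section~3.
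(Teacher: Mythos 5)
Your proposal is correct and takes essentially the same route as the paper: both proofs hinge on applying Proposition \ref{nuclear} to the $\mathfrak A$-nuclear inclusion $A\subseteq\mathbb B(H)$ (nuclear by Definition \ref{exa} and the remark following it) to obtain the c.c.p. module map $\Psi: A\otimes_{\mathfrak A}^{min}C^{op}\to\mathbb B(H)\otimes_{\mathfrak A}^{max}C^{op}$ extending the identity on $A\odot_{\mathfrak A}C^{op}$. Your explicit two-sided estimate---contractivity of $\Psi$ for one inequality, Proposition \ref{incl}$(i)$ together with Takesaki (Proposition \ref{tak}) for the other---is merely a more detailed rendering of the paper's concluding remark that the extended map is injective, hence isometric, on the $min$ completion.
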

\begin{proof}
The above proposition applied to the $\mathfrak A$-nuclear inclusion $A\subseteq \mathbb B(H_\mathfrak A)$  gives a c.c.p. module map $\Psi: A\otimes_{\mathfrak A}^{min} C^{op}\to \mathbb B(H_\mathfrak A)\otimes_{\mathfrak A}^{max} C^{op},$ extending the identity map on $A\odot_{\mathfrak A} C^{op}$. This is an injective $*$-homomorphism on a dense subset of the module $min$ tensor product, and so it remains injective on the $min$ completion (just by the definition of the $min$ module norm).
\end{proof}

We observed that the $min$ module norm behaves better in preserving inclusions. Now we show that the $max$ module norm is better in preserving exact sequences. As in the classical case \cite[3.7.1, 3.7.2]{bo}, the proof of the next result is based on the fact that, for $C^*$-algebras and right $\mathfrak A$-modules $A$ and $B$  with compatible actions and closed ideal and submodule $J\unlhd A$, there are module norms $\varrho$ and $\delta$ such that
\begin{equation}\label{eq}
\frac{A\otimes_{\mathfrak A}^{max} B^{op}}{J\otimes_{\mathfrak A}^{max} B^{op}}\cong (A/J)\otimes_{\mathfrak A}^{\varrho} B^{op},\ \ \frac{A\otimes_{\mathfrak A}^{min} B^{op}}{J\otimes_{\mathfrak A}^{min} B^{op}}\cong (A/J)\otimes_{\mathfrak A}^{\delta} B^{op},
\end{equation}

\noindent which is observed as follows: Consider the exact sequence
$$0\to J\to A\to A/J\to 0$$
with arrows both $*$-homomorphisms and module maps, then we get the exact sequence
$$0\to J\odot_{\mathfrak A}B^{op}\to A\odot_{\mathfrak A}B^{op}\to (A/J)\odot_{\mathfrak A}B^{op}\to 0$$
and an embedding with dense range
$$(A/J)\odot_{\mathfrak A}B^{op}\cong \frac{A\odot_{\mathfrak A} B^{op}}{J\odot_{\mathfrak A} B^{op}}\hookrightarrow\frac{A\otimes_{\mathfrak A}^{max/min} B^{op}}{J\otimes_{\mathfrak A}^{max/min} B^{op}}.$$
Now the required module norms $\varrho$ and $\delta$ on $(A/J)\odot_{\mathfrak A}B^{op}$ are just the restriction of the quotient norms.

We say that the exact sequence
$$0\to J\rightarrow A\xrightarrow{\pi} A/J\to 0$$
is {\it locally $\mathfrak A$-split} if for each finitely generated operator subspace and submodule $E\subseteq A/J$ there is a c.p. module map $\sigma: E\to A$ with $\pi\circ\sigma=$id$_E$. The next result is proved as in the classical case \cite[section 3.7]{bo}.

\begin{proposition}\label{exseq}
With the above notations, if
$$0\to J\to A\to A/J\to 0$$
is an exact sequence with arrows both $*$-homomorphisms and module maps, then for each $C^*$-algebra and right $\mathfrak A$-module $B$  with compatible action,

$(i)$ the sequence
$$0\to J\otimes_{\mathfrak A}^{max}B^{op}\to A\otimes_{\mathfrak A}^{max}B^{op}\to (A/J)\otimes_{\mathfrak A}^{max}B^{op}\to 0$$
is exact,

$(ii)$ the sequence
$$0\to J\otimes_{\mathfrak A}^{min}B^{op}\to A\otimes_{\mathfrak A}^{min}B^{op}\to (A/J)\otimes_{\mathfrak A}^{min}B^{op}\to 0$$
is exact iff the module norm $\delta$ defined in (\ref{eq}) is the $min$ module norm. In particular, this holds when $A/J$ or $B$ is $\mathfrak A$-nuclear or the original exact sequence is locally $\mathfrak A$-split.
\end{proposition}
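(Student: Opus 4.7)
The key observation, already recorded in the discussion preceding the statement, is that exactness of the $\min$ (resp.\ $\max$) sequence is equivalent to the identification $\delta=\|\cdot\|_{\min}$ (resp.\ $\varrho=\|\cdot\|_{\max}$) on $(A/J)\odot_{\mathfrak A} B$. Indeed, the canonical map
$$\frac{A\otimes_{\mathfrak A}^{max/min} B^{op}}{J\otimes_{\mathfrak A}^{max/min} B^{op}}\to (A/J)\otimes_{\mathfrak A}^{max/min} B^{op}$$
is always surjective with dense image, so exactness at the middle is precisely injectivity of this map, i.e.\ equality of the quotient norm with the target $C^{*}$-norm.

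For part $(i)$, I will verify $\varrho=\|\cdot\|_{\max}$ as follows. First, $\varrho$ is a module norm on $(A/J)\odot_{\mathfrak A} B$, since the $\mathfrak A$-action on the quotient $A\otimes_{\mathfrak A}^{\max} B^{op}/J\otimes_{\mathfrak A}^{\max} B^{op}$ is continuous. The module Takesaki theorem (Proposition \ref{tak}) then gives $\varrho\leq\|\cdot\|_{\max}$. For the reverse, given any module $*$-homomorphism $\pi:(A/J)\odot_{\mathfrak A} B\to\mathbb B(H)$ into an $\mathfrak A$-Hilbert space $H$, its composition with the quotient $q:A\odot_{\mathfrak A} B\to(A/J)\odot_{\mathfrak A} B$ is a module $*$-homomorphism $\tilde\pi:A\odot_{\mathfrak A} B\to\mathbb B(H)$, so $\|\tilde\pi(y)\|\leq\|y\|_{\max}$ for every $y$. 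Hence $\|\pi(u)\|\leq\inf_{q(y)=u}\|y\|_{\max}=\varrho(u)$ for each $u\in(A/J)\odot_{\mathfrak A} B$, and taking the supremum over such $\pi$ yields $\|u\|_{\max}\leq\varrho(u)$.

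For part $(ii)$, the stated equivalence is now immediate. It remains to check the two sufficient conditions. If either $A/J$ or $B$ is $\mathfrak A$-nuclear, then Corollary \ref{unique} gives a unique module norm on $(A/J)\odot_{\mathfrak A} B$, so $\delta=\|\cdot\|_{\min}$ trivially. If the original sequence is locally $\mathfrak A$-split, fix $u=\sum_{i=1}^{n}\bar a_{i}\otimes b_{i}\in(A/J)\odot_{\mathfrak A} B$ and choose a finitely generated operator submodule $E\subseteq A/J$ containing all of the $\bar a_{i}$; the splitting hypothesis yields a c.c.p.\ module section $\sigma:E\to A$ with $\pi\circ\sigma=\mathrm{id}_{E}$. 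Then $\tilde u:=\sum_{i}\sigma(\bar a_{i})\otimes b_{i}\in A\odot_{\mathfrak A} B$ lifts $u$, and Proposition \ref{cont} applied to $\sigma\otimes_{\min}\mathrm{id}_{B}$, together with Proposition \ref{incl}$(i)$ (which identifies the $\min$-norm on $E\otimes_{\mathfrak A}^{\min}B$ with the restriction from $(A/J)\otimes_{\mathfrak A}^{\min}B$), gives $\|\tilde u\|_{\min,A\otimes B}\leq\|u\|_{\min,A/J\otimes B}$. Therefore $\delta(u)\leq\|u\|_{\min,A/J\otimes B}$, and the reverse inequality is Takesaki's theorem again.

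The main obstacle is the locally split case: one must ensure that Proposition \ref{cont} is available for the c.c.p.\ section $\sigma$ defined only on the operator submodule $E$ (rather than on a $C^{*}$-algebra), and that the embedding $E\otimes_{\mathfrak A}^{\min}B\subseteq(A/J)\otimes_{\mathfrak A}^{\min}B$ really is isometric. Both points are handled by passing through a Stinespring-type dilation of $\sigma$ on the operator system level, just as in the classical argument of \cite[3.7.2]{bo}; once they are in place the remaining verifications are routine.
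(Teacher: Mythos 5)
Your skeleton is exactly the paper's: the paper prints no detailed proof of this proposition, instead establishing in the preceding discussion the quotient identifications with the norms $\varrho$ and $\delta$ and deferring the rest to the classical arguments \cite[3.7.1, 3.7.2]{bo}; your verification that $\varrho=\|\cdot\|_{max}$ by pulling module representations back through the quotient, the use of Corollary \ref{unique} for the nuclear case, and the lifting computation for the locally split case are precisely the module translations of those arguments. There are, however, two genuine gaps as written. First, your opening reduction equates exactness of the whole sequence with exactness at the middle, which silently assumes that $J\otimes_{\mathfrak A}^{max}B\to A\otimes_{\mathfrak A}^{max}B$ is injective. For the min norm this is covered by Proposition \ref{incl}$(i)$, but for the max norm it is not automatic, and Proposition \ref{incl}$(ii)$ does not apply unless you verify that the ideal inclusion $J\subseteq A$ is $\mathfrak A$-extendable. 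This is where the first half of the classical proof of \cite[3.7.1]{bo} does real work: a nondegenerate representation of $J\odot B$ is extended to $A\odot B$ via an approximate unit of $J$, and in the present setting one must check that the extension $\tilde\pi(a)\pi(j)\xi=\pi(aj)\xi$ is again a module map into the same $\mathfrak A$-Hilbert space --- routine, but without it $\varrho$ is a norm on the wrong quotient.

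Second, in the locally split case you candidly flag, but do not close, the two steps your argument borrows from machinery stated only for $C^*$-algebras: Proposition \ref{cont} assumes a c.p.\ module map between $C^*$-algebras, and Proposition \ref{incl}$(i)$ is stated for $C^*$-subalgebras and submodules, not for an operator subsystem and finitely generated submodule $E\subseteq A/J$. In the module setting this is not mere bookkeeping: by Proposition \ref{iso} the min module norm is a quotient of the ordinary min norm by the closure of the balancing ideal $I_{\mathfrak A}$, and the elements of $I_{\mathfrak A}$ witnessing the quotient norm of $u\in E\odot_{\mathfrak A}B$ need not lie in $E\odot B$, so neither the isometry of $E\otimes_{\mathfrak A}^{min}B\subseteq (A/J)\otimes_{\mathfrak A}^{min}B$ nor the contractivity of $\sigma\otimes_{min}\mathrm{id}_B$ follows by simple restriction; asserting that a ``Stinespring-type dilation'' handles it is a promissory note, and this is precisely the kind of issue the paper's Lemma \ref{subsystem} is later forced to address. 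A smaller normalization: the paper's definition of locally $\mathfrak A$-split supplies only a c.p.\ section $\sigma$, while your norm estimate needs $\sigma$ completely contractive, so you should rescale as in the classical case. Since the source itself omits the proof, your write-up is already more explicit than the paper, but these are the two places a complete proof must supply actual arguments.
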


As in the classical case \cite[3.7.8]{bo}, we could get $(ii)$ above even if $B$ is $\mathfrak A$-exact.

\begin{proposition}\label{exseq2}
The conclusion of part $(ii)$ above also holds if $\mathfrak A$ is nuclear and $B$ is $\mathfrak A$-exact.
\end{proposition}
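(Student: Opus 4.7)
The plan is to mimic the classical argument of \cite[3.7.8]{bo}. The key ingredient is that nuclearity of $\mathfrak{A}$ makes each matrix algebra $\mathbb{M}_{k_n}(\mathfrak{A})$ $\mathfrak{A}$-nuclear (by the examples following Lemma~\ref{lnuc}), so that Proposition~\ref{exseq}(ii) applies to the min-tensor sequence with second factor $\mathbb{M}_{k_n}(\mathfrak{A})$.

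Using $\mathfrak{A}$-exactness of $B$, fix a faithful $\mathfrak{A}$-nuclear representation $\pi\colon B\hookrightarrow\mathbb{B}(H)$ with approximating c.c.p.\ module maps $\varphi_{n}\colon B\to\mathbb{M}_{k_n}(\mathfrak{A})$ and $\psi_{n}\colon\mathbb{M}_{k_n}(\mathfrak{A})\to\mathbb{B}(H)$ satisfying $\psi_{n}\circ\varphi_{n}\to\pi$ in point-norm. For $z\in A\otimes_{\mathfrak{A}}^{min}B$ with $(q\otimes\mathrm{id}_{B})(z)=0$ (where $q\colon A\to A/J$ is the quotient), naturality and continuity of $\mathrm{id}_{A}\otimes\varphi_{n}$ (Proposition~\ref{cont}) place $(\mathrm{id}\otimes\varphi_{n})(z)$ in the kernel of $q\otimes\mathrm{id}$ on $A\otimes_{\mathfrak{A}}^{min}\mathbb{M}_{k_n}(\mathfrak{A})$, and hence in $J\otimes_{\mathfrak{A}}^{min}\mathbb{M}_{k_n}(\mathfrak{A})$ by the exactness of the latter sequence. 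Pushing forward by $\mathrm{id}_{A}\otimes\psi_{n}$ sends the result into the closed subspace $J\otimes_{\mathfrak{A}}^{min}\mathbb{B}(H)$, and the point-norm limit (Proposition~\ref{cont}) yields $(\mathrm{id}_{A}\otimes\pi)(z)\in J\otimes_{\mathfrak{A}}^{min}\mathbb{B}(H)$.

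The remaining task is to deduce $z\in J\otimes_{\mathfrak{A}}^{min}B$. By Proposition~\ref{incl}(i), $\mathrm{id}_{A}\otimes\pi$ embeds both $A\otimes_{\mathfrak{A}}^{min}B$ and $J\otimes_{\mathfrak{A}}^{min}B$ isometrically into $A\otimes_{\mathfrak{A}}^{min}\mathbb{B}(H)$, reducing the problem to the intersection identity
\[(A\otimes_{\mathfrak{A}}^{min}\pi(B))\cap(J\otimes_{\mathfrak{A}}^{min}\mathbb{B}(H))=J\otimes_{\mathfrak{A}}^{min}\pi(B)\]
inside $A\otimes_{\mathfrak{A}}^{min}\mathbb{B}(H)$. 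This is the main obstacle; I would establish it by a slice-map computation: for $w$ in the intersection and any $\sigma\in\pi(B)^{*}$ with Hahn--Banach extension $\tilde{\sigma}\in\mathbb{B}(H)^{*}$, the second-factor slice $L_{\tilde{\sigma}}(w)$ lies in $J$ (by membership in $J\otimes_{\mathfrak{A}}^{min}\mathbb{B}(H)$) and agrees with $L_{\sigma}(w)\in A$ (by membership in $A\otimes_{\mathfrak{A}}^{min}\pi(B)$), so all $\pi(B)^{*}$-slices of $w$ lie in $J$. The module slice-map property for the pair $(A,J,\pi(B))$, which is verified by the same approximation scheme reducing to the trivial slice-map property for the $\mathfrak{A}$-nuclear matrix algebras $\mathbb{M}_{k_n}(\mathfrak{A})$, then forces $w\in J\otimes_{\mathfrak{A}}^{min}\pi(B)$, completing the argument.
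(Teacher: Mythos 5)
Your first two paragraphs reproduce the paper's own argument almost verbatim: the paper likewise fixes a faithful $\mathfrak A$-nuclear representation $\pi\colon B\hookrightarrow\mathbb B(H)$, factors it through the algebras $\mathbb M_{k_n}(\mathfrak A)$ by c.c.p.\ module maps, uses exactness of the middle row of the resulting three-row diagram (the paper attributes this directly to nuclearity of $\mathfrak A$; your route via Proposition \ref{exseq}$(ii)$ applied with second factor $\mathbb M_{k_n}(\mathfrak A)$ amounts to the same step), and passes to the point-norm limit to conclude that the image of a kernel element lies in $(J\otimes_{\mathfrak A}^{min}\mathbb B(H))\cap(A\otimes_{\mathfrak A}^{min}\pi(B))$. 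At that point the paper simply asserts that this intersection equals $J\otimes_{\mathfrak A}^{min}\pi(B)$ and concludes by faithfulness of $\pi$. You correctly isolate this intersection identity as the real remaining issue, but your slice-map argument for it fails in the module category.

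The failure is concrete: a scalar slice $\mathrm{id}_A\otimes\tilde\sigma$ does not descend to the balanced tensor product $A\otimes_{\mathfrak A}^{min}\mathbb B(H)$, because it does not annihilate the balancing ideal $I_{\mathfrak A}$ --- on a generator it yields $\tilde\sigma(T)\,(a\cdot\alpha)-\tilde\sigma(T\cdot\alpha)\,a$, which is nonzero in general --- so the map $L_{\tilde\sigma}(w)$ you use is not defined. Moreover, even granting slices, your final invocation of a ``module slice-map property'' for the triple $(A,J,\pi(B))$ is essentially the statement being proved: classically, the Fubini-type property that all elements of $A\otimes_{min}B$ with slices in $J$ lie in $J\otimes_{min}B$, for all pairs $(A,J)$, is equivalent to exactness of $B$, and your sketched verification ``by the same approximation scheme'' only returns membership in $J\otimes_{\mathfrak A}^{min}\mathbb B(H)$, i.e.\ leads back to the same intersection identity --- the argument is circular. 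The gap is repairable without slices, and this also uses the full strength of the hypothesis $w\in J\otimes_{\mathfrak A}^{min}\mathbb B(H)$ which your slice step discards: since $J$ is an ideal and a submodule of $A$, take an approximate unit $(e_\lambda)$ of $J$ and note that left multiplication by $e_\lambda\otimes 1$ is well defined on the module tensor product (as $I_{\mathfrak A}$ is a two-sided ideal); then $(e_\lambda\otimes 1)w\to w$ because $w\in J\otimes_{\mathfrak A}^{min}\mathbb B(H)$ (check on elementary tensors and use boundedness), while $(e_\lambda\otimes 1)w\in\overline{J\odot_{\mathfrak A}\pi(B)}=J\otimes_{\mathfrak A}^{min}\pi(B)$ because $e_\lambda A\subseteq J$. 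With that substitution your write-up becomes a complete proof, matching the paper's and filling in the step the paper leaves unproved.
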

\begin{proof}
We need to show that the kernel of  $A\otimes_{\mathfrak A}^{min}B^{op}\to (A/J)\otimes_{\mathfrak A}^{min}B^{op}$ is contained in $J\otimes_{\mathfrak A}^{min}B^{op}$. Take any faithful representation and module map $B\xrightarrow{\pi} \mathbb B(H_\mathfrak A)$ and approximately decompose it through $\MM$ with c.c.p. admissible maps  $\varphi_n: B\to \MM$ and $\psi_n:\MM\to \mathbb B(H_\mathfrak A)$ and consider a diagram of three rows, where the first row is the exact sequence in part $(ii)$ above and the second and third rows are the same sequence with $B$ replaced with $\MM$ and $\mathbb B(H_\mathfrak A)$, respectively, and the vertical maps between the first and second row are the maps of the form id$\otimes_{min}\varphi_n$ and those between the second and third row are of the form id$\otimes_{min}\psi_n$, with id being id$_J$, id$_A$ or id$_{(A/J)}$ in the first to the third column. Since $\mathfrak A$ is nuclear, the second row is exact, hence for each $x$ in the kernel of  $A\otimes_{\mathfrak A}^{min}B^{op}\to (A/J)\otimes_{\mathfrak A}^{min}B^{op}$, id$_J\otimes_{min}(\psi_n\circ\varphi_n^{op})(x)$  is in $J\otimes_{\mathfrak A}^{min}\mathbb B(H_\mathfrak A)^{op}$, and so id$_J\otimes_{min}\pi^{op}(x)$  is in $(J\otimes_{\mathfrak A}^{min}\mathbb B(H_\mathfrak A)^{op})\cap(A\otimes_{\mathfrak A}^{min}\pi(B)^{op})=J\otimes_{\mathfrak A}^{min}\pi(B)^{op}$. Therefore, $x\in J\otimes_{\mathfrak A}^{min}B^{op}$, as $\pi$ is faithful.
\end{proof}

\begin{lemma}\label{mincont}
Let $A$ be $C^*$-algebra and a module with compatible actions, $B$ be a von Neumann algebra and a module with normal compatible action, and $\theta:A\to B$ be a c.p. module map. Consider  a  faithful representation and module map $B\subseteq \mathbb B(X)$ in a  $\mathfrak A$-Hilbert module $X$. If
$$\theta\times \mathrm{id}_{B^{'op}}: A\odot_{\mathfrak A}B^{'op}\to \mathbb B(X)$$
is min-continuous and $\pi: B\to \mathbb B(H_\mathfrak A)$ is any normal representation and module map in an $\mathfrak A$-Hilbert module $H_\mathfrak A:=H\otimes \mathfrak A$, where $H$ is a Hilbert space, then
$$(\pi\circ\theta)\times \mathrm{id}_{\pi (B)^{'op}}: A\odot_{\mathfrak A}\pi(B)^{'op}\to \mathbb B(\bar H_\mathfrak A)$$
is also min-continuous.
\end{lemma}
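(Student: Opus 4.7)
The plan is to follow the standard strategy for such transfer-of-tensor-continuity results: realize the normal representation $\pi$ as a compression of an amplification of the identity representation of $B$ on $\bar H$, and then pass min-continuity through each step using Proposition \ref{cont} and Proposition \ref{incl}$(i)$.

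First, since $\pi: B \to \mathbb{B}(\bar K)$ is a normal representation of the von Neumann algebra $B \subseteq \mathbb{B}(\bar H)$ that is also an $\mathfrak{A}$-module map, by the spatial theory of normal representations (adapted to the module setting by a cyclic/GNS-type decomposition that respects the $\mathfrak{A}$-action) we may find an $\mathfrak{A}$-Hilbert space $\ell$ (with, say, the trivial $\mathfrak{A}$-action) and an isometry $V: \bar K \to \bar H \otimes \ell$ intertwining the $\mathfrak{A}$-actions such that $p := V V^*$ belongs to $(B \bar\otimes \mathbb{C}1_\ell)' = B' \bar\otimes \mathbb{B}(\ell)$ and $\pi(b) = V^*(b \otimes 1_\ell)V$ for every $b \in B$. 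In particular, the compression $\mathrm{Ad}_{V^*}: \mathbb{B}(\bar H \otimes \ell) \to \mathbb{B}(\bar K)$, $T \mapsto V^* T V$, is a c.c.p. module map, and it restricts to a $*$-isomorphism of $p(B' \bar\otimes \mathbb{B}(\ell))p$ onto $\pi(B)'$.

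Next, the amplified product representation
$$\sigma: A \odot_{\mathfrak{A}} (B' \bar\otimes \mathbb{B}(\ell))^{op} \to \mathbb{B}(\bar H \otimes \ell), \quad a \otimes z \mapsto (\theta(a) \otimes 1_\ell)\, z,$$
is min-continuous. Indeed, by Proposition \ref{cont} (applied with $\mathbb{B}(\ell)$ given the trivial $\mathfrak{A}$-module structure) the map
$(\theta \times \mathrm{id}_{(B')^{op}}) \otimes^{min} \mathrm{id}_{\mathbb{B}(\ell)^{op}}$
extends continuously from $(A \otimes_{\mathfrak{A}}^{min} (B')^{op}) \otimes^{min} \mathbb{B}(\ell)^{op}$ into $\mathbb{B}(\bar H) \otimes^{min} \mathbb{B}(\ell) \subseteq \mathbb{B}(\bar H \otimes \ell)$, and Lemma \ref{min} identifies this with $\sigma$ on the dense subalgebra $A \odot_{\mathfrak{A}} (B' \otimes \mathbb{B}(\ell))^{op}$; ultraweak density of the spatial tensor product in the von Neumann one, together with the standard density argument, then gives min-continuity of $\sigma$ on the full $A \otimes_{\mathfrak{A}}^{min} (B' \bar\otimes \mathbb{B}(\ell))^{op}$.

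Finally, the map $j: \pi(B)' \hookrightarrow B' \bar\otimes \mathbb{B}(\ell)$, $j(x) := V x V^*$, is an injective $*$-homomorphism and module map (the module property uses that $V$ intertwines the $\mathfrak{A}$-actions), so by Proposition \ref{incl}$(i)$ it induces an isometric embedding $\mathrm{id}_A \otimes j^{op}: A \otimes_{\mathfrak{A}}^{min}(\pi(B)')^{op} \hookrightarrow A \otimes_{\mathfrak{A}}^{min}(B' \bar\otimes \mathbb{B}(\ell))^{op}$. Composing this embedding with $\sigma$ and then with $\mathrm{Ad}_{V^*}$, and using the identities $V^*V = 1_{\bar K}$, $VV^* = p$, together with the commutation of $\theta(a) \otimes 1_\ell$ with $p$, one checks on elementary tensors that the resulting composition equals $(\pi \circ \theta) \times \mathrm{id}_{(\pi(B)')^{op}}$. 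Being a composition of c.c.p. module maps and an isometric embedding, this is automatically min-continuous on $A \otimes_{\mathfrak{A}}^{min}(\pi(B)')^{op}$, which is what we needed. The main obstacle is Step 1: constructing $V$ so that both the $\mathfrak{A}$-actions and the normal/module structure of $\pi$ are simultaneously preserved requires a careful module-equivariant version of the GNS/cyclic-decomposition of a normal representation.
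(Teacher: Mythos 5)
Your overall architecture---implementing $\pi$ as a compression of an amplification, proving min-continuity of the amplified product map, and then conjugating back by the isometry---is exactly the paper's route (the paper amplifies by $K$ itself inside $\mathbb B(H\otimes_{\mathfrak A}K)$ rather than by an auxiliary multiplicity space $\ell$, but that difference is cosmetic), and your Steps 1 and 3 are sound modulo the module-equivariant structure theory for normal representations that you rightly flag and that the paper likewise asserts without detail. The genuine gap is in Step 2, at the clause ``ultraweak density of the spatial tensor product in the von Neumann one, together with the standard density argument, then gives min-continuity of $\sigma$.'' No such density argument exists: $B'\otimes^{min}\mathbb B(\ell)$ is only ultraweakly dense in $B'\bar\otimes\mathbb B(\ell)$, not norm-dense, so a norm bound for $\sigma$ on $A\odot_{\mathfrak A}(B'\odot\mathbb B(\ell))^{op}$ says nothing about $\sigma$ on elementary tensors whose second leg lies in the von Neumann tensor product but not in the norm closure of the spatial one; ultraweak approximation of the legs converges in neither the min norm of the domain nor the operator norm of the range, so it yields neither boundedness nor even a well-defined extension. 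This matters because Step 3 genuinely consumes the von Neumann version: your embedding $j(x)=VxV^*$ lands in $p(B'\bar\otimes\mathbb B(\ell))p\cong\pi(B)'$, and this identification of the commutant requires the weak closure, not the spatial tensor product that Proposition \ref{cont} controls.

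The repair is the finite-rank cut-down argument that the paper (following the classical \cite[3.8.4]{bo}) actually uses and which you never invoke: fix $z\in A\odot_{\mathfrak A}(B'\bar\otimes\mathbb B(\ell))^{op}$ and compress the second legs by $1\otimes q$ for a finite rank projection $q\in\mathbb B(\ell)$. Since $(1\otimes q)(B'\bar\otimes\mathbb B(\ell))(1\otimes q)\cong\mathbb M_n(B')$, the hypothesis that $\theta\times\mathrm{id}_{(B')^{op}}$ is min-continuous (hence extends to a c.c.p., so completely contractive, map) bounds the compressed image: $\|\sigma(z_q)\|\leq\|z_q\|_{min}\leq\|z\|_{min}$, where $z_q=(\mathrm{id}\otimes\mathrm{Ad}_{1\otimes q})(z)$ and the last inequality holds because $\mathrm{id}\otimes\mathrm{Ad}_{1\otimes q}$ is c.c.p. for the min module norm. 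Since $\theta(a)\otimes 1_\ell$ commutes with $1\otimes q$, one has $(1\otimes q)\sigma(z)(1\otimes q)=\sigma(z_q)$, and letting $q\to 1_\ell$ in SOT, lower semicontinuity of the operator norm under strong convergence gives $\|\sigma(z)\|\leq\|z\|_{min}$. With this substitution for your density claim, your proof coincides with the paper's.
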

\begin{proof}
The normal representation $\pi$ is implemented via cut-down by a projection in the commutant of the copy of $B$ in $\mathbb B(\bar X\otimes_{\mathfrak A}H_\mathfrak A)$. Therefore, we only need to check that
$$(\theta\otimes_{min} 1_{\mathbb B(H_\mathfrak A)}) \times \mathrm{id}_{B^{'}\bar\otimes_{\mathfrak A} \mathbb B(H_\mathfrak A)^{op}}: A\odot_{\mathfrak A}(B^{'}\bar\otimes_{\mathfrak A}\mathbb B(H_\mathfrak A)^{op})\to \mathbb B(\bar X\otimes_{\mathfrak A}H_\mathfrak A)$$
is $min$-continuous. This is done, like in the classical case \cite[3.8.4]{bo}, by cutting down $\mathbb B(H_\mathfrak A)$ by a net of finite rank projections and taking limit in SOT: First let us take an increasing  net of finite rank projections $(q_i)\in \mathbb B(H_\mathfrak A)$, commuting with the action as in Lemma \ref{fr},  and note that the map 
$$(\theta\otimes_{min} 1_{\mathbb B(q_iH_\mathfrak A)}) \times \mathrm{id}_{B^{'}\bar\otimes_{\mathfrak A} \mathbb B(q_iH_\mathfrak A)^{op}}: A\odot_{\mathfrak A}(B^{'}\bar\otimes_{\mathfrak A}\mathbb B(q_iH_\mathfrak A)^{op})\to \mathbb B(\bar X\otimes_{\mathfrak A}q_iH_\mathfrak A)$$
could be identified with the map
$$(\theta\times {\rm id}_{B^{'}}) \otimes \mathrm{id}_{\mathbb B(q_iH_\mathfrak A)^{op}}: (A\odot_{\mathfrak A}B^{'})\odot_{\mathfrak A}\mathbb B(q_iH_\mathfrak A)^{op}\to \mathbb B(\bar X\otimes_{\mathfrak A}q_iH_\mathfrak A),$$
where, 
$$(A\odot_{\mathfrak A}B^{'})\odot_{\mathfrak A}\mathbb B(q_iH_\mathfrak A)^{op}=(A\odot_{\mathfrak A}B^{'})\odot_{\mathfrak A}\mathbb M_{n_i}(\mathfrak A)^{op}=\mathbb M_{n_i}(A\odot_{\mathfrak A}B^{'})^{op},$$
which shows that min and max norms coincide on the first algebraic tensor product, and so $(\theta\times {\rm id}_{B^{'}}) \otimes \mathrm{id}_{\mathbb B(q_iH_\mathfrak A)^{op}}$ is min norm continuous with norm at most $\|\theta\times {\rm id}_{B^{'}}\|$. Finally, this net of maps, after identification, converges SOT to  $(\theta\otimes_{min} 1_{\mathbb B(q_iH_\mathfrak A)}) \times \mathrm{id}_{B^{'}\bar\otimes_{\mathfrak A} \mathbb B(q_iH_\mathfrak A)^{op}}$, and we are done.
\end{proof}

Now we are ready to prove a module version of a result of Kirchberg \cite[3.8.5]{bo}.

\begin{theorem}[Kirchberg]\label{kirchberg}
 Let $\mathfrak A$ be unital. Let $A$ be $C^*$-algebra and a module with compatible action, $B$ be a von Neumann algebra and a module with normal compatible central action, and $\theta:A\to B$ be a c.p. module map.  Then the following are equivalent:

 $(i)$\ $\theta$ is $\mathfrak A$-weakly nuclear,

 $(ii)$ the product map $\theta\times \mathrm{id}_{B^{'op}}: A\odot_{\mathfrak A}B^{'op}\to \mathbb B(H_\mathfrak A)$ is min-continuous, for each Hilbert space $H$ and faithful representation and module map  $B\subseteq \mathbb B(\bar H_\mathfrak A)$.
 \end{theorem}
  \begin{proof}
If $(i)$ holds and $\varphi_n:A\to\mathbb M_{k(n)}(\mathfrak A)$ and $\psi_n:\mathbb M_{k(n)}(\mathfrak A)\to B$ are c.c.p. admissible maps where $\psi_n\circ\varphi_n$ point-ultraweak approximates $\theta$, then since there is a unique $C^*$-norm on $\mathbb M_{k(n)}(\mathfrak A)\odot_{\mathfrak A} B^{'op}$, we may form the composition $\Phi_n$ of the maps $$\psi_n\times\mathrm{id}_{B^{'op}}: \mathbb M_{k(n)}(\mathfrak A)\otimes_{\mathfrak A}^{max}B^{'op}\to \mathbb B(H_{\mathfrak A})$$ and $$\varphi_n\otimes_{min}\mathrm{id}_{B^{'op}}: A\otimes_{\mathfrak A}^{min}B^{'op}\to\mathbb M_{k(n)}(\mathfrak A)\otimes_{\mathfrak A}^{min}B^{'op},$$ whose point-ultraweak cluster point $\Phi: A\otimes_{\mathfrak A}^{min}B^{'op}\to\mathbb B(H_{\mathfrak A})$ is a c.c.p. admissible map, extending $\theta\times\mathrm{id}_{B^{'op}}:A\odot_{\mathfrak A}B^{'op}\to\mathbb B(H_\mathfrak A)$.

Conversely, if $(ii)$ holds, by Lemma \ref{wnucl}, we need to show that $\theta$ is point-ultraweak close to $\mathfrak A$-factorable maps. Fix finite sets $F=\{a_1,\cdots,a_k\}\subseteq A$ and $S=\{\tau_1,\cdots,\tau_m\}\subseteq B_{*}$ and $\varepsilon>0$. Put $\tau=\frac{1}{m}(\tau_1+\cdots\tau_m)$. By Radon-Nikodym theorem \cite[3.8.3]{bo}, there are elements $c_1,\cdots, c_m\in \pi_\tau(B)^{'}$ with $\tau_j=\langle\pi_\tau(\cdot)c_j\hat 1_B, \hat 1_B\rangle$, for $1\leq j\leq m$, where $\pi_\tau: B\to\mathbb B(L^2(B,\tau))$ is the GNS construction of $\tau$. We may regard $\pi_\tau$ as a map into $\mathbb B(L^2(B,\tau)\otimes \mathfrak A)$. By Lemma \ref{mincont}, we get the product map $$(\pi_\tau\circ\theta)\times \mathrm{id}_{B^{'op}}: A\odot_{\mathfrak A}^{min}B^{'op}\to\mathbb B(L^2(B,\tau)\otimes \mathfrak A).$$
Let us define the state $\tilde\tau$ on $A\odot_{\mathfrak A}^{min}\pi_\tau(B)^{'op}$ by $\tilde\tau(a\otimes c)=\langle\pi_\tau(\theta(a))c\hat 1_B, \hat 1_B\rangle$, then $\tilde\tau(a\otimes c_j)=\tau_j(\theta(a))$, for $1\leq j\leq m$.

Let us take an essential faithful representation $A\subseteq \mathbb B(K)$ in an Hilbert space $K$ (inflate if necessary, to get an essential representation), and apply the Glimm's Lemma \cite[1.4.11]{bo} to the faithful representation $A\odot_{\mathfrak A}^{min}\pi_\tau(B)^{'op}\subseteq \mathbb B(K\otimes L^2(B,\tau))$ to find $\{\xi_1,\cdots,\xi_n\}\subseteq K$ and $\{b_1,\cdots, b_n\}\subseteq B$ with
$$|\tilde\tau(a_i\otimes c_j)-\langle a\otimes c_j(\xi),\xi\rangle|<\varepsilon\ \ (1\leq i\leq k, 1\leq j\leq m),$$
for $\xi=\sum_{i=1}^n\xi_i\otimes \hat b_i$. Let $p\in\mathbb B(K)$ be the orthogonal projection onto the subspace generated by $\xi_i$'s and consider the c.p. map $\psi: p\mathbb B(K)p\to B$ defined by $\psi(\xi_i\otimes\xi_\ell)=b_i^*b_\ell$, extended linearly and continuously, then
\begin{align*}
\tau_j(\psi(pap))&=\sum_{i,\ell=1}^{n}\langle a\xi_\ell, \xi_i\rangle\tau_j(b_i^*b_\ell)\\
&=\sum_{i,\ell=1}^{n}\langle a\xi_\ell, \xi_i\rangle\langle\pi_\tau(b_i^*b_\ell)c_j\hat 1_B, \hat 1_B\rangle\\
&=\sum_{i,\ell=1}^{n}\langle a\xi_\ell, \xi_i\rangle\langle c_j\hat b_\ell, b_i^*\rangle\\
&=\langle a\otimes c_j(\xi),\xi\rangle,
\end{align*}
for $1\leq j\leq m$, with $\xi=\sum_{i=1}^n\xi_i\otimes \hat b_i$ as above. Thus $|\tau_j(\theta(a))-\tau_j(\psi(pap))|<\varepsilon,$ for each $j$. Since $\mathfrak A$ acts centrally on $B$, the map $\rho: \mathfrak A\to B; \ \alpha\mapsto 1_B\cdot\alpha$ is into the center of $B$,  in particular, $\psi$ and $\rho$ have commuting ranges, and we may form $\tilde\psi:=\psi\times\rho: \mathbb B(pK)\otimes \mathfrak A\to B$. We want to compose this with the c.p. map $\tilde\phi: A\to  \mathbb B(pK)\otimes \mathfrak A; \ a\mapsto (p\otimes 1)(a\times 1_\mathfrak A)(p\otimes 1)$, to estimate $\theta$. This finishes the proof, as  $\mathbb B(pK)\otimes \mathfrak A
=\mathbb M_r(\mathfrak A)$, where $r={\rm rank}(p)$, by Lemma \ref{fr}. We have, $$\tilde\psi\circ\tilde\phi(a)=(\psi\times\rho)(pap\otimes 1_\mathfrak A)=\psi(pap)\rho(1_\mathfrak A)=\psi(pap)1_B=\psi(pap),$$
thus the above estimate could be rewritten as, $|\tau_j(\theta(a))-\tau_j(\tilde\psi\circ\tilde\phi(a))|<\varepsilon,$ for each $j$, as required.  
\end{proof}

\begin{corollary} Let $\mathfrak A$  be a von Neumann algebra and $A$ be a von Neumann algebra and $\mathfrak A$-module with compatible normal central action. Then the following are equivalent:

$(i)$ $A$ is $\mathfrak A$-semidiscrete,

$(ii)$ $\mathrm{id}_{A}\times \mathrm{id}_{A^{'op}}: A\odot_{\mathfrak A}A^{'op}\to \mathbb B(H_\mathfrak A)$ is min-continuous, for each Hilbert space $H$ and faithful representation and module map $A\subseteq \mathbb B(\bar H_\mathfrak A)$,

$(iii)$ $A^{'}$ is $\mathfrak A$-semidiscrete.
\end{corollary}

As another application, we could prove the converse of Proposition \ref{nuclear}.

\begin{corollary}\label{nuclear2}
Let $A$ and $B$ be $C^*$-algebras and right $\mathfrak A$-modules with compatible actions and $\theta: A\to B$ be a module map. Assume that there is a faithful representation and module map $B^{**}\subseteq \mathbb B(H\otimes\mathfrak A)$, for some Hilbert space $H$. Then $\theta$ is $\mathfrak A$-nuclear iff for each $C^*$-algebra and right $\mathfrak A$-module $C$ with compatible action, the map $$\theta\otimes_{max}{\rm id}_{C^{op}}: A\otimes_{\mathfrak A}^{max} C^{op}\to  B\otimes_{\mathfrak A}^{max} C^{op}$$ factors through $ A\otimes_{\mathfrak A}^{min} C^{op}$.
\end{corollary}
\begin{proof}
Consider the embeddings  $B\subseteq B^{**}\subseteq \mathbb B(H_\mathfrak A)$. The map $$\theta\otimes_{max}{\rm id}_{B^{**'op}}: A\otimes_{\mathfrak A}^{max} B^{**'op}\to B\otimes_{\mathfrak A}^{max}B^{**'op}$$ factors through the corresponding $min$-tensor product, and composing with the continuous inclusion $\iota: B\otimes_{\mathfrak A}^{max} B^{**'op}\hookrightarrow\mathbb B(H_\mathfrak A)$, we get that the map $$\theta\times{\rm id}_{B^{**'op}}: A\odot_{\mathfrak A}B^{**'op}\to \mathbb B(H_\mathfrak A)$$ is $min$-continuous, and so $\theta: A\to B^{**}$ is  $\mathfrak A$-nuclear. Since the map already ranges in $B$, it is also $\mathfrak A$-nuclear as a map into $B$.
\end{proof}

Now we are ready to prove the module version of a result, obtained in the classical case independently by Choi-Effros \cite{ce2} and Kirchberg \cite{k}.

\begin{theorem}[Choi-Effros, Kirchberg]\label{kirchberg2}
Let $\mathfrak A$ be an injective $C^*$-algebra and $A$ be $C^*$-algebra and a module with compatible action, with a faithful representation and module map $A^{**}\subseteq \mathbb B(H_\mathfrak A)$, for some Hilbert space $H$. Then, the following are equivalent:

 $(i)$\ $A$ is $\mathfrak A$-nuclear,

 $(ii)$ for any $C^*$-algebra and module $B$ with compatible action, there is a unique module norm on $A\odot_{\mathfrak A} B$.
 \end{theorem}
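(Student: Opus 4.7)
The plan is to derive Theorem~\ref{kirchberg2} directly from the machinery already developed: Corollary~\ref{unique} (unique tensor product for $\mathfrak A$-nuclear $A$), Corollary~\ref{nuclear} (the characterization of $\mathfrak A$-nuclearity by a min-max factorization), and the module version of Takesaki's theorem (Proposition~\ref{tak}), which sandwiches every module norm between $\|\cdot\|_{min}$ and $\|\cdot\|_{max}$. In essence, the theorem is a restatement of Corollary~\ref{nuclear} with Takesaki's theorem converting ``$\|\cdot\|_{min} = \|\cdot\|_{max}$'' into the language ``unique module norm''.

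For the direction $(i) \Rightarrow (ii)$ I would first apply Corollary~\ref{unique} to obtain that $A \otimes_{\mathfrak A}^{max} B^{op}$ and $A \otimes_{\mathfrak A}^{min} B^{op}$ are isometrically isomorphic as $C^*$-algebras and modules. Proposition~\ref{tak} then squeezes any module norm $\tilde\varrho$ on $A \odot_{\mathfrak A} B^{op}$ between these two coinciding norms, forcing $\tilde\varrho = \|\cdot\|_{min} = \|\cdot\|_{max}$ and hence uniqueness.

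For the direction $(ii) \Rightarrow (i)$ I would use Proposition~\ref{tak} in the opposite direction: uniqueness of the module norm forces $\|\cdot\|_{min} = \|\cdot\|_{max}$ on $A \odot_{\mathfrak A} B^{op}$, since both are module norms by the discussion preceding Proposition~\ref{tak}. Consequently the canonical quotient $q: A \otimes_{\mathfrak A}^{max} B^{op} \to A \otimes_{\mathfrak A}^{min} B^{op}$ is an isometric isomorphism, and the identity map on $A \otimes_{\mathfrak A}^{max} B^{op}$ factors trivially as $q^{-1} \circ q$ through $A \otimes_{\mathfrak A}^{min} B^{op}$. Applying Corollary~\ref{nuclear} to $\theta = \mathrm{id}_A$ then gives that $\mathrm{id}_A$ is $\mathfrak A$-nuclear, which by Definition~\ref{exa} means $A$ is $\mathfrak A$-nuclear.

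I do not foresee a serious obstacle: the injectivity hypothesis on $\mathfrak A$ is already absorbed into the preceding results (notably in Theorem~\ref{kirchberg} and Corollary~\ref{nuclear}, where injectivity underlies the module Arveson extension argument and the passage from point-ultraweak to point-norm approximation via Lemma~\ref{convex}). The only genuinely new verification is the equivalence ``unique module norm $\Leftrightarrow$ $\|\cdot\|_{min} = \|\cdot\|_{max}$'', which is exactly Proposition~\ref{tak} applied in both directions, so the argument reduces to assembling existing pieces.
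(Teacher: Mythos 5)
Your proposal is correct as a formal assembly of the paper's stated results, but it takes a visibly different route from the paper on the direction $(ii)\Rightarrow(i)$, and the difference is instructive. The paper does not invoke Corollary \ref{nuclear} at all: it applies $(ii)$ to the single algebra $C=((A^{**})')^{op}$ arising from the universal representation $A\subseteq A^{**}\subseteq \mathbb B(H)$, deduces that the max-continuous product map $\iota\times\mathrm{id}$ is then also min-continuous, concludes from Theorem \ref{kirchberg} that the canonical inclusion $\iota: A\to A^{**}$ is $\mathfrak A$-\emph{weakly} nuclear, and finally cites the Remark after Proposition \ref{inj} to upgrade this to $\mathfrak A$-nuclearity of $A$. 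That last upgrade -- converting a point-ultraweak approximate factorization through $\mathbb M_n(\mathfrak A)$ with maps into $A^{**}$ into a point-norm factorization with maps into $A$ -- is precisely where the injectivity of $\mathfrak A$ is consumed (conditional expectation $\mathfrak A^{**}\to\mathfrak A$, the Lemma \ref{injc} unitization device, and the density-plus-convexity argument of Theorem \ref{sd}). Your shortcut, applying Corollary \ref{nuclear} to $\theta=\mathrm{id}_A$ after noting that norm uniqueness makes the factorization through the min tensor product trivial, is legitimate within the paper's logic, since the corollary precedes the theorem and does not depend on it; what it buys is a one-line reduction (``the theorem is Corollary \ref{nuclear} at $\theta=\mathrm{id}_A$ plus Proposition \ref{tak}''). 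Your $(i)\Rightarrow(ii)$ direction coincides with the paper's, and your use of Proposition \ref{tak} to translate between ``min $=$ max'' and ``unique module norm'' is correctly grounded in the fact, noted before that proposition, that min and max are themselves module norms.

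One caveat deserves emphasis: your closing claim that the injectivity hypothesis is ``absorbed into \dots\ Corollary \ref{nuclear}'' is a misattribution, and it flags a real tension. Corollary \ref{nuclear} is stated with no injectivity hypothesis on $\mathfrak A$, so taken at face value your argument would prove Theorem \ref{kirchberg2} \emph{without} assuming $\mathfrak A$ injective. The resolution is that the final sentence of the corollary's proof (``since the map already ranges in $B$, it is also $\mathfrak A$-nuclear as a map into $B$'') silently performs exactly the weakly-nuclear-to-nuclear upgrade described above, which the paper elsewhere justifies only with injectivity-type machinery; this is presumably why the paper routes its own proof of the theorem through the Remark after Proposition \ref{inj} rather than through the corollary. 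So for a self-contained and fully rigorous proof you should either replace the appeal to Corollary \ref{nuclear} by the explicit chain (Theorem \ref{kirchberg} followed by the Remark after Proposition \ref{inj}) or acknowledge that the corollary's last step requires the same injectivity-dependent argument.
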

  \begin{proof} By Corollary \ref{unique}, $(i)$ implies $(ii)$. Conversely consider the  inclusion and module map $\iota: A\to A^{**}\subseteq \mathbb B(H_\mathfrak A)$, then the  map $\iota\times \mathrm{id}: A\odot_{\mathfrak A}A^{**'op}\to \mathbb B(H_\mathfrak A)$ is $max$-continuous, and so by $(ii)$ it is also $min$-continuous, hence $\iota$ is $\mathfrak A$-weakly nuclear, by the above theorem. Now $(i)$ follows from the remark after Proposition \ref{inj}.
\end{proof}

Now we turn to $\mathfrak A$-exact algebras, and extend a well known result of Kirchberg on exact $C^*$-algebras \cite{k2}. We need some preparation first.

\begin{lemma}\label{subsystem}
Let  $C\subseteq A$ be an operator subsystem  and submodule and $J\unlhd B$ be a closed ideal and submodule, then there is an isometric inclusion $$(C\otimes_{\mathfrak A}^{min} B^{op})/ (C\otimes_{\mathfrak A}^{min} J^{op})\subseteq (A\otimes_{\mathfrak A}^{min} B^{op})/(A\otimes_{\mathfrak A}^{min} J^{op}),$$ preserving the module actions.
\end{lemma}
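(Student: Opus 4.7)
Plan: I would adapt the classical proof of exactness of $\otimes_{min}$ at the operator system level (cf.\ \cite[\S3.6]{bo}), using Proposition \ref{incl}(i) to set up the map and a Hahn-Banach / Cauchy-Schwarz argument based on an approximate unit of $J$ to verify the isometry.

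First I would invoke Proposition \ref{incl}(i) to get that the canonical inclusion $C\otimes_{\mathfrak A}^{min}B\hookrightarrow A\otimes_{\mathfrak A}^{min}B$ is isometric and carries $C\otimes_{\mathfrak A}^{min}J$ isometrically onto its image in $A\otimes_{\mathfrak A}^{min}J$. The induced map
$$
\Phi:(C\otimes_{\mathfrak A}^{min}B)/(C\otimes_{\mathfrak A}^{min}J)\longrightarrow(A\otimes_{\mathfrak A}^{min}B)/(A\otimes_{\mathfrak A}^{min}J)
$$
is therefore well defined, contractive, and a right $\mathfrak A$-module map, since the action descends from the ambient inclusion. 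The content of the lemma is that $\Phi$ is isometric, which I would reduce by Hahn-Banach duality to the following extension statement: every state $\phi$ on $C\otimes_{\mathfrak A}^{min}B$ vanishing on $C\otimes_{\mathfrak A}^{min}J$ extends to a state $\psi$ on $A\otimes_{\mathfrak A}^{min}B$ vanishing on $A\otimes_{\mathfrak A}^{min}J$.

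To verify this extension statement I would first pass to the unital setting via the unitization procedure described before Proposition \ref{iso}, so that $1_A\in C$ (as $C$ is an operator subsystem of the unital $A$). Hahn-Banach extends $\phi$ (a norm-one functional with $\phi(1)=1$) to a state $\psi$ on the ambient $C^{*}$-algebra $A\otimes_{\mathfrak A}^{min}B$. Fix an approximate unit $(e_\lambda)$ of $J$; since $1_A\in C$, we have $1_A\otimes e_\lambda^{2}\in C\otimes_{\mathfrak A}^{min}J$, so $\psi(1_A\otimes e_\lambda^{2})=\phi(1_A\otimes e_\lambda^{2})=0$. Cauchy-Schwarz for the state $\psi$ then yields, for every $y\in A\otimes_{\mathfrak A}^{min}B$,
$$
|\psi(y(1_A\otimes e_\lambda))|^{2}\leq \psi(yy^{*})\,\psi(1_A\otimes e_\lambda^{2})=0.
$$
Specializing $y=a\otimes j$ for $a\in A, j\in J$ gives $\psi(a\otimes je_\lambda)=0$, and then $\psi(a\otimes j)=\lim_\lambda\psi(a\otimes je_\lambda)=0$ by continuity together with $je_\lambda\to j$. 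By linearity and density $\psi$ vanishes on $A\otimes_{\mathfrak A}^{min}J$, completing the extension claim and hence the isometry. For the completely isometric statement at matrix level, I would run the analogous argument with u.c.p.\ module maps into $M_{n}(\mathbb B(H_0))$ in place of states, invoking the module Arveson extension theorem \cite[Theorem 3.8]{am}.

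The main obstacle is the automatic vanishing of the extension $\psi$ on $A\otimes_{\mathfrak A}^{min}J$: the Cauchy-Schwarz bound collapses to zero precisely because $1_A\in C$, so the argument exploits in an essential way that $C$ is an operator \emph{subsystem} of $A$ (containing the unit), not merely a subspace. A secondary technical point is the reduction to the unital setting when $\mathfrak A$, $A$, $B$ or $J$ fail to be unital, which proceeds along the pattern fixed just before Proposition \ref{iso}.
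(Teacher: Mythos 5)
Your reduction of the lemma to a functional-extension statement is the right shape, but the step ``reduce by Hahn--Banach duality to extending \emph{states}'' has a genuine gap. Isometry of $\Phi$ at a general element $x\in C\otimes_{\mathfrak A}^{min}B$ requires a \emph{norm-one functional} $\phi$ annihilating $C\otimes_{\mathfrak A}^{min}J$ with $|\phi(x)|=\mathrm{dist}(x,C\otimes_{\mathfrak A}^{min}J)$, and such a norming functional is in general not a state: states compute only the numerical radius of non-self-adjoint elements (already in $\mathbb M_2$ one has $\sup_\phi|\phi(e_{12})|=\tfrac12<1=\|e_{12}\|$ over states $\phi$). Moreover, since $C\otimes_{\mathfrak A}^{min}J$ is not an ideal in the operator system $C\otimes_{\mathfrak A}^{min}B$, there is no Jordan-decomposition argument that replaces a norming functional by states still annihilating $C\otimes_{\mathfrak A}^{min}J$. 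Your Cauchy--Schwarz computation is correct but uses positivity of $\psi$ essentially, so as written it proves the isometry only on self-adjoint elements. Two repairs are available: (a) self-adjointize first, applying your state argument to $\left(\begin{smallmatrix}0&x\\ x^*&0\end{smallmatrix}\right)$ in $\mathbb M_2(C)\otimes_{\mathfrak A}^{min}B$ (your closing remark about matrix levels is in fact \emph{needed} for the scalar statement, not an optional refinement); or (b) drop positivity altogether: extend an arbitrary norming $\phi$ by Hahn--Banach to $\tilde\psi$ on $A\otimes_{\mathfrak A}^{min}B$ and take a weak-$*$ cluster point $\psi$ of the net $\tilde\psi(\,\cdot\,(1\otimes(1-e_\lambda)))$; then $\|\psi\|\leq 1$, $\psi$ vanishes on $A\otimes_{\mathfrak A}^{min}J$ since $y(1\otimes(1-e_\lambda))\to 0$ for $y\in A\odot J$, and $\psi=\phi$ on $C\otimes_{\mathfrak A}^{min}B$ because $x(1\otimes e_\lambda)\in C\otimes_{\mathfrak A}^{min}J$ is killed by $\phi$. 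In the module setting (b) needs one extra observation, which you should record: $I_{\mathfrak A}$ is a two-sided ideal of $A\odot B^{op}$, so right multiplication by $1\otimes e_\lambda$ descends to $A\odot_{\mathfrak A}B^{op}$ and is min-contractive.

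For comparison, the paper settles the isometry in one line by citing the classical argument \cite[3.9.2]{bo}, which is exactly the approximate-unit norm formula $\|x+A\otimes_{\mathfrak A}^{min}J\|=\lim_\lambda\|x(1\otimes(1-e_\lambda))\|$ applied to $x\in C\otimes_{\mathfrak A}^{min}B$, together with the observation (as in your first paragraph) that the module action descends; once you invoke $1\otimes e_\lambda$ to repair the duality step, you are running the same engine, so the direct norm-formula route is shorter. Two smaller points: Proposition \ref{incl}$(i)$ is stated for $C^*$-subalgebras and submodules, so for the operator subsystem $C$ you should instead quote Lemma \ref{min} and the discussion preceding Proposition \ref{incl} to get that the min norm of $A\otimes_{\mathfrak A}^{min}B$ restricts correctly on $C\odot_{\mathfrak A}B$; and your sentence ``Hahn--Banach extends $\phi$ (a norm-one functional with $\phi(1)=1$) to a state'' presumes unitality of the norming functional, which is precisely what fails for general $x$.
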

\begin{proof} The inclusion is isometric as in the classical case \cite[3.9.2]{bo}. It is also a module map, as the inclusion $C\otimes_{\mathfrak A}^{min} B^{op}\subseteq A\otimes_{\mathfrak A}^{min} B^{op}$ is so.
\end{proof}

\begin{lemma}\label{subsystem2}
Let  $J\unlhd B$ be a closed ideal and submodule, then $$(A\otimes_{\mathfrak A}^{min} B^{op})/(A\otimes_{\mathfrak A}^{min} J^{op})\cong A\otimes_{\mathfrak A}^{min} (B/J)^{op},$$ canonically, iff the same holds for $A$ replaced by any operator subsystem  and finitely generated submodule $C\subseteq A$.
\end{lemma}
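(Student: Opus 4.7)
The plan is to prove the two directions separately; the forward implication is an immediate application of Lemma~\ref{subsystem}, while the reverse implication requires a standard finite-approximation argument, which I expect to be the heart of the proof.

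For the ``only if'' direction, suppose the canonical surjective module $*$-homomorphism $\pi_A:(A\otimes_{\mathfrak A}^{min}B)/(A\otimes_{\mathfrak A}^{min}J)\to A\otimes_{\mathfrak A}^{min}(B/J)$ is an isometric isomorphism. Given any operator subsystem and finitely generated submodule $C\subseteq A$, Lemma~\ref{subsystem} supplies an isometric module embedding $(C\otimes_{\mathfrak A}^{min}B)/(C\otimes_{\mathfrak A}^{min}J)\hookrightarrow (A\otimes_{\mathfrak A}^{min}B)/(A\otimes_{\mathfrak A}^{min}J)$, and Proposition~\ref{incl}$(i)$ yields an isometric inclusion $C\otimes_{\mathfrak A}^{min}(B/J)\hookrightarrow A\otimes_{\mathfrak A}^{min}(B/J)$. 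The obvious commutative square identifies the analogous $\pi_C$ with the restriction of $\pi_A$, so $\pi_C$ is itself an isometric isomorphism.

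For the ``if'' direction, $\pi_A$ is always contractive and surjective, so it suffices to show injectivity, equivalently that $\pi_A$ is isometric on the dense image of $A\odot_{\mathfrak A}B$. Fix $x\in A\odot_{\mathfrak A}B$ represented by a finite sum $\sum_{i=1}^n a_i\otimes b_i$ and let $C\subseteq A$ be the operator subsystem and $\mathfrak A$-submodule generated by $\{a_i,a_i^*\}_i$ (together with $1_A$ in the unital case). Then $C$ is finitely generated, $x\in C\odot_{\mathfrak A}B$, and by hypothesis the corresponding $\pi_C$ is an isometric isomorphism. Combining Lemma~\ref{subsystem} on the domain side with Proposition~\ref{incl}$(i)$ on the codomain side,
\[
\|x+A\otimes_{\mathfrak A}^{min}J\|=\|x+C\otimes_{\mathfrak A}^{min}J\|=\|\pi_C(x+C\otimes_{\mathfrak A}^{min}J)\|=\|\pi_A(x+A\otimes_{\mathfrak A}^{min}J)\|,
\]
so $\pi_A$ is isometric on a dense subspace, and by continuity on all of $(A\otimes_{\mathfrak A}^{min}B)/(A\otimes_{\mathfrak A}^{min}J)$.

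The main obstacle I expect is verifying that the ``operator subsystem and finitely generated submodule'' $C$ can indeed be chosen to carry any prescribed finite tensor: one has to close the $\mathbb C$-span of the $a_i$ under the adjoint and under the $\mathfrak A$-action while keeping the result finitely generated as an $\mathfrak A$-module, so that Lemma~\ref{subsystem} genuinely applies. The rest is formal once the two isometric compatibilities are in place.
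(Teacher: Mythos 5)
Your proposal is correct and follows essentially the same route as the paper's (much terser) proof: the ``only if'' direction is exactly the paper's appeal to Lemma~\ref{subsystem}, and your element-wise argument for the ``if'' direction---placing each finite tensor $\sum_i a_i\otimes b_i$ in a finitely generated subsystem $C$ and chaining the two isometric compatibilities---is a spelled-out version of the paper's one-line density claim that the union of the quotients $(C\otimes_{\mathfrak A}^{min}B)/(C\otimes_{\mathfrak A}^{min}J)$ is dense in $(A\otimes_{\mathfrak A}^{min}B)/(A\otimes_{\mathfrak A}^{min}J)$. One cosmetic point: for the codomain inclusion $C\otimes_{\mathfrak A}^{min}(B/J)\hookrightarrow A\otimes_{\mathfrak A}^{min}(B/J)$ you should not cite Proposition~\ref{incl}$(i)$, which is stated only for $C^*$-subalgebras; for an operator subsystem $C$ this isometry is immediate from the spatial definition of the min module norm (and is the same fact implicitly used in Lemma~\ref{subsystem}).
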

\begin{proof} If there is a canonical isomorphism as above for all operator subsystems  and finitely generated submodules $C\subseteq A$, then the same holds for $A$, since the union of all quotients $(C\otimes_{\mathfrak A}^{min} B^{op})/(C\otimes_{\mathfrak A}^{min} J^{op})$ with $C$ as above is dense in $(A\otimes_{\mathfrak A}^{min} B^{op})/(A\otimes_{\mathfrak A}^{min} J^{op}).$ The converse follows from the above lemma.
\end{proof}

\begin{lemma}\label{subsystem}
Let $\mathfrak A$ be a unital $C^*$-algebra,   $C\subseteq A$ be an operator subsystem and a finitely generated  submodule, and $B_n$ be unital $C^*$-algebras and unital modules with compatible actions and put $B_0:=\bigoplus_{n} B_n\subseteq B:=\prod_{n} B_n$, then there is a u.c.p. module map and isometric isomorphism of $C^*$-algebras $$C\otimes_{\mathfrak A}^{min} B^{op}\to  \prod_{n} (C\otimes_{\mathfrak A}^{min} B_n^{op})$$ sending $C\otimes_{\mathfrak A}^{min} B_0^{op}$ to $\bigoplus_{n} (C\otimes_{\mathfrak A}^{min} B_n^{op})$. In particular, there is a contractive module map $\prod_{n}(C\otimes_{\mathfrak A}^{min} B_n^{op})/\bigoplus_{n}(C\otimes_{\mathfrak A}^{min} B_n^{op})\to C\otimes_{\mathfrak A}^{min} (B/B_0)^{op}$, which is an isometric isomorphism of $C^*$-algebras when module $min$-tensoring with $C$ preserves the exactness of short exact sequences $$0\to J^{op}\to A^{op}\to (A/J)^{op}\to 0$$
with arrows both $*$-homomorphisms and module maps, for each $C^*$-algebra and right $\mathfrak A$-module $B$  with compatible action and each closed ideal and submodule $J$.
\end{lemma}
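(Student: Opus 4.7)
The plan is to construct the u.c.p. module $*$-homomorphism $\Phi: C\otimes_{\mathfrak A}^{min} B\to \prod_{n}(C\otimes_{\mathfrak A}^{min} B_n)$ coordinate-wise from the projections $\pi_n: B\to B_n$. Each $\pi_n$ is a u.c.p. module $*$-homomorphism, so by Proposition \ref{cont} the maps $\mathrm{id}_C\otimes_{min}\pi_n$ are u.c.p. module morphisms; assembling them produces $\Phi$. The advertised second map will then be obtained by passing to the corresponding quotient.

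I would then show $\Phi$ is isometric. Pick faithful representations $B_n\subseteq \mathbb B(H_n)$ in $\mathfrak A$-Hilbert spaces $H_n$ and $C\subseteq A\subseteq \mathbb B(K)$ in an $\mathfrak A$-Hilbert space $K$. Then $B=\prod_n B_n\subseteq \mathbb B(\bigoplus_n H_n)$, with $\bigoplus_n H_n$ again an $\mathfrak A$-Hilbert space. The interior tensor product decomposes as $\bar K\otimes_{\mathfrak A}(\bigoplus_n H_n)\cong\bigoplus_n(\bar K\otimes_{\mathfrak A} H_n)$, since the defining $\mathfrak A$-valued inner products match on elementary tensors. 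Hence the min-norm of $u=\sum_i c_i\otimes b_i$ computed on $\bar K\otimes_{\mathfrak A}(\bigoplus H_n)$ equals $\sup_n \bigl\|\sum_i c_i\otimes \pi_n(b_i)\bigr\|_{min}=\|\Phi(u)\|$, and Lemma \ref{min} guarantees independence from the chosen representations.

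For surjectivity of $\Phi$ onto the whole product, I would use that $C$ is finitely generated as an $\mathfrak A$-submodule: fix generators $e_1,\ldots,e_k\in C$ so that every element of $C\odot_{\mathfrak A} B_n^{op}$ can, after absorbing the $\mathfrak A$-action into the $B_n$-factor via the ideal $I_{\mathfrak A}$, be put in the normal form $\sum_{i=1}^{k} e_i\otimes b_i^{(n)}$. The main obstacle I foresee is the uniform estimate $\|b_i^{(n)}\|\leq M\bigl\|\sum_i e_i\otimes b_i^{(n)}\bigr\|_{min}$ with $M$ independent of $n$; in the classical case this follows from a dual basis for the finite-dimensional $C$, and here I would obtain the module analogue from the injectivity of $\mathbb M_k(\mathfrak A)$ together with an Arveson-type extension (\cite[Corollary 3.9]{am}) of coordinate functionals on $C$ to c.c.p.\ module maps on $A$. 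Once this bound is secured, any $(x_n)\in \prod_n(C\otimes_{\mathfrak A}^{min} B_n)$ of finite supremum yields bounded sequences $(b_i^{(n)})_n\in \prod_n B_n$, and $\sum_i e_i\otimes (b_i^{(n)})_n$ is the required preimage. Restricting $\Phi$ to $C\otimes_{\mathfrak A}^{min} B_0$ sends elementary tensors $c\otimes b$ with $b\in B_0$ to eventually-zero sequences, and by isometry the closure $C\otimes_{\mathfrak A}^{min} B_0$ maps isometrically onto $\bigoplus_n(C\otimes_{\mathfrak A}^{min} B_n)$, completing the first assertion.

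For the final statement, the universal property of the quotient supplies a canonical contractive module $*$-homomorphism $(C\otimes_{\mathfrak A}^{min} B)/(C\otimes_{\mathfrak A}^{min} B_0)\to C\otimes_{\mathfrak A}^{min}(B/B_0)$; composing with the inverse of the $C^*$-isomorphism induced by $\Phi$ on the quotients gives the announced contractive module map out of $\prod_n(C\otimes_{\mathfrak A}^{min} B_n)/\bigoplus_n(C\otimes_{\mathfrak A}^{min} B_n)$. Under the hypothesis that $min$-tensoring with $C$ preserves exactness of the short exact sequence $0\to B_0\to B\to B/B_0\to 0$, this canonical quotient map becomes an isomorphism, and so the composition is an isometric isomorphism of $C^*$-algebras.
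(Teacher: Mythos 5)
Your overall route is the same as the paper's: the paper proves this lemma in one line, as a transplant of the classical argument \cite[3.9.4--3.9.5]{bo} using module representations $B_n\subseteq\mathbb B(\bar H_n)$ and $A\subseteq\mathbb B(K)$ together with the identification $\mathbb B(K\otimes_{\mathfrak A}\bigoplus_n H_n)\cong\mathbb B\bigl(\bigoplus_n(K\otimes_{\mathfrak A}H_n)\bigr)$, which is exactly your isometry step. Your construction of $\Phi$ from the coordinate projections, the appeal to Lemma \ref{min} for representation-independence, the identification of the image of $C\otimes_{\mathfrak A}^{min}B_0$ with $\bigoplus_n(C\otimes_{\mathfrak A}^{min}B_n)$, and the derivation of the quotient map from universality plus the exactness hypothesis applied to $0\to B_0\to B\to B/B_0\to 0$ all match the intended argument and are fine.

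The gap is in your justification of the one genuinely new point: the uniform coordinate bound $\|b_i^{(n)}\|\leq M\bigl\|\sum_i e_i\otimes b_i^{(n)}\bigr\|_{min}$ needed for surjectivity onto the full product. First, $\mathbb M_k(\mathfrak A)$ is not known to be injective in the module category under the lemma's hypotheses: $\mathfrak A$ is an arbitrary unital $C^*$-algebra here, and the paper invokes \cite[Corollary 3.9]{am} (e.g., in Lemma \ref{lnuc}) only under the extra hypothesis that the coefficient object is injective as an $\mathfrak A$-module. Second, the dual system you need --- module maps $f_i:C\to\mathfrak A$ with $f_i(e_j)=\delta_{ij}1_{\mathfrak A}$, so that $f_i\otimes\mathrm{id}$ recovers $b_i^{(n)}$ with a bound independent of $n$ --- need not exist: a finitely generated operator $\mathfrak A$-module need not be free, and if the generators satisfy relations (say $e_1\cdot\alpha=0$ for some $\alpha\neq 0$) no module map can send $e_1$ to $1_{\mathfrak A}$. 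Nor does the Arveson-type extension theorem \cite[Theorem 3.8]{am} help as stated: it extends c.c.p.\ module maps into $\mathbb B(H)$ for an $\mathfrak A$-Hilbert space $H$, not into $\mathfrak A$ itself, so it does not manufacture $\mathfrak A$-valued coordinate maps, and with $\mathbb B(H)$-valued maps the absorption of coefficients into the $B_n$-leg no longer makes sense. In the classical case this bound is automatic from finite-dimensionality of $C$; in the module setting it is precisely where a new idea (or an added hypothesis, e.g., that the generators form a module basis/frame, or that $\mathfrak A$ is injective) is required. To be fair, you correctly isolated this as the crux --- the paper's own proof glosses over it by simply citing the classical case --- but your proposed patch, as written, does not close it.
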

\begin{proof}
The proof goes  as in the classical case \cite[33.9.4-5]{bo} with  representations and module maps $B_n^{op}\subseteq \mathbb B(\bar H_n\otimes\mathfrak A)$ and $A \subseteq \mathbb B(K\otimes\mathfrak A)$, for Hilbert spaces $H_n$ and  $K$ via the identification $\mathbb B(K\otimes (\oplus_n \bar H_n)\otimes\mathfrak A)\cong \mathbb B(\oplus_n(K\otimes \bar H_n\otimes\mathfrak A).$
\end{proof}

Next, let $C$ be a separable $C^*$-algebra and right $\mathfrak A$-module with compatible action and $C\subseteq \mathbb B(H_\mathfrak A)$ be a faithful representation and module map, for a separable  Hilbert space  $H$, and $(q_i)$ be an increasing sequence of finite rank projections in $\mathbb B(H_\mathfrak A)$ converging strongly to the identity  and identify $q_i\mathbb B(H_\mathfrak A)q_i$ with $\mathbb M_{n_i}(\mathfrak A)$, as in Lemma \ref{fr}. We use the notations, $C_i:=q_iCq_i$ and $\mathrm{ad}_i: x\mapsto q_ixq_i$.

\begin{lemma}\label{ad}
If $\mathfrak A$ is an exact unital $C^*$-algebra,  $C$ is a unital module such that  $min$-tensoring with $C$ preserves the exactness of short exact sequences $$0\to J^{op}\to A^{op}\to (A/J)^{op}\to 0,$$ as above, then  $\big\|\mathrm{ad}_i^{-1}|_{C_i}\big\|_{cb}\to 1$, as $i\to\infty$ and there are u.c.p. module maps $\psi_i: \mathbb B(q_iH_\mathfrak A)\to A$ with $\big\|(\psi_i-\mathrm{ad}_i^{-1})|_{C_i}\big\|_{cb}\to 0,$ as $i\to\infty$.
\end{lemma}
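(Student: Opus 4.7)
The plan is to adapt the classical argument (cf.\ \cite[3.9.6]{bo}) to the module setting, combining the preceding subsystem lemma with the module Stinespring/Arveson machinery of \cite[Theorems 3.4, 3.8]{am}. First I would assemble the compressions $\beta_i := \mathrm{ad}_i|_C : C \to B_i$, with $B_i = p_i \mathbb B(H) p_i \cong \mathbb M_{n_i}(\mathfrak A)$ each a u.c.p.\ module map, into a single u.c.p.\ module map $\beta : C \to Q := (\prod_i B_i)/(\bigoplus_i B_i)$.

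The first key step is to show that $\beta$ is a complete isometry. For any $c \in M_k(C)$ and $\xi \in H^k$, strong convergence $p_i \to 1$ on $H$ (and hence on $H^k$) gives $\|p_i c p_i \xi\| \to \|c\xi\|$, so $\liminf_i \|p_i c p_i\|_{M_k(B_i)} \geq \|c\|$; combined with the trivial upper bound $\|p_i c p_i\| \leq \|c\|$, this yields $\|p_i c p_i\| \to \|c\|$, and therefore $\|\beta(c)\|_{M_k(Q)} = \limsup_i \|p_i c p_i\| = \|c\|_{M_k(C)}$.

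From the complete isometry of $\beta$, the cb-norm convergence $\|\mathrm{ad}_i^{-1}|_{C_i}\|_{cb} \to 1$ will follow by a finite-dimensional compactness argument: since $C$ is finite-dimensional, the cb-norm of $\mathrm{ad}_i^{-1}|_{C_i}$ is attained at a uniformly bounded amplification level $k_0$ (a Smith-type result for operator systems), so it suffices to verify that on the compact unit ball of $M_{k_0}(C)$ the pointwise convergence $\|p_i x p_i\| \to \|x\|$ is uniform; this is elementary by passing to a convergent subsequence of any potential family of counterexamples in the finite-dimensional space $M_{k_0}(C)$. The exactness of $\mathfrak A$ and the preceding subsystem lemma enter here to guarantee that the module-tensor norms on $C\otimes^{\min}_{\mathfrak A} B_i$ agree with the matrix norms that control the cb-computation.

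For the second claim, $\phi_i := \mathrm{ad}_i^{-1}|_{C_i} : C_i \to C \subseteq A$ is unital, self-adjoint, and a module map, with $\|\phi_i\|_{cb} \leq 1 + \varepsilon_i$, $\varepsilon_i \to 0$. I would apply a module-equivariant version of the standard perturbation lemma, decomposing $\phi_i$ into completely positive parts via a Wittstock/Paulsen-type argument adapted using \cite[Theorem 3.4]{am}, and then renormalizing to obtain u.c.p.\ module maps $\psi_i : C_i \to A$ with $\|\psi_i - \phi_i\|_{cb} \to 0$; since $\phi_i$ already takes values in $A$, the perturbation can be performed entirely within $A$. The principal obstacle I anticipate is this last step: producing a module-equivariant completely positive decomposition of $\phi_i$ without passing to an $\mathfrak A$-injective envelope of $A$ requires delicate use of the module Stinespring theory, and may force an auxiliary approximation through module maps $C_i \to A^{**}$ that are then shown to approximately land in $A$.
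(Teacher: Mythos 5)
Your first half breaks down at the ``Smith-type result'' step, and the gap is fatal. First, $C$ here is a finitely generated $\mathfrak A$-submodule, not a finite-dimensional vector space, so the unit ball of $\mathbb M_{k_0}(C)$ is not compact and your convergent-subsequence argument is unavailable (unless $\mathfrak A$ itself is finite dimensional). Second, and more fundamentally, no Smith-type lemma gives a uniformly bounded amplification level: Smith's lemma controls cb-norms of maps \emph{into} $\mathbb M_n(\mathbb C)$, whereas $\mathrm{ad}_i^{-1}|_{C_i}$ maps into $C\subseteq A$, and its cb-norm must be tested at matrix levels $k_i$ that grow with $i$. The decisive objection is that everything you actually establish --- that $\beta$ is a complete isometry into $\big(\prod_i B_i\big)/\big(\bigoplus_i B_i\big)$ and that $\|p_i c p_i\|\to\|c\|$ at each \emph{fixed} level $k$ --- holds automatically for any separably represented $C^*$-algebra, with no exactness hypothesis whatsoever; your only invocation of exactness (``the module-tensor norms on $C\otimes^{min}_{\mathfrak A}B_i$ agree with the matrix norms'') is vacuous for a single $i$, since $C\odot_{\mathfrak A}\mathbb M_{n_i}(\mathfrak A)=\mathbb M_{n_i}(C)$ carries a unique $C^*$-norm. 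Hence, if your argument were correct, it would apply with $\mathfrak A=\mathbb C$ to any finite-dimensional operator system in $\mathbb B(H)$ and show that $\mathbb B(H)$ is exact, which is false; indeed there exist finite-dimensional operator systems whose exactness constant is strictly greater than $1$, so $\|\mathrm{ad}_i^{-1}|_{C_i}\|_{cb}\to 1$ genuinely fails without the tensor hypothesis. An argument for this lemma must consume that hypothesis at the point where the levels $k_i$ become unbounded, and yours never does.

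That is precisely how the paper proceeds: arguing by contradiction, it assumes $\|\mathrm{ad}_i^{-1}|_{C_i}\|_{cb}\to\lambda>1$, picks norm-one $c_i\in\mathbb M_{k_i}(C)$ with $\|(\mathrm{ad}_i\otimes\mathrm{id}_{k_i})(c_i)\|\to\frac{1}{\lambda}$, and assembles the class $c$ of $(c_i)$ in $\prod_i\mathbb M_{k_i}(C)/\bigoplus_i\mathbb M_{k_i}(C)$ together with its image $\tilde c$ in $C\otimes^{min}_{\mathfrak A}\big(\prod_i\mathbb M_{k_i}(\mathfrak A)/\bigoplus_i\mathbb M_{k_i}(\mathfrak A)\big)$. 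The hypothesis that $min$-tensoring preserves short exact sequences enters through Lemma \ref{subsystem} to make this identification isometric, and exactness of $\mathfrak A$ enters via \cite[3.9.5]{bo} to identify $\mathbb M_k(\mathfrak A)\otimes^{min}_{\mathfrak A}\big(\prod_i\mathbb M_{k_i}(\mathfrak A)/\bigoplus_i\mathbb M_{k_i}(\mathfrak A)\big)$ with the corresponding product-over-sum quotient, whence $\|\tilde c\|=\sup_k\|(\mathrm{ad}_k\otimes_{min}\mathrm{id})(\tilde c)\|\leq\limsup_i\|(\mathrm{ad}_i\otimes\mathrm{id}_{k_i})(c_i)\|=\frac{1}{\lambda}<1=\|c\|$, a contradiction. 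For the second claim, your Wittstock-decomposition plan is both unnecessary and, as you yourself anticipate, problematic to carry out module-equivariantly; the paper instead invokes the quantitative perturbation estimate $\|\psi_i-\mathrm{ad}_i^{-1}|_{C_i}\|_{cb}\leq 2(\dim_{\mathfrak A}C)\big(\|\mathrm{ad}_i^{-1}|_{C_i}\|_{cb}-1\big)$, a module analogue of \cite[B.11]{bo} in which the vector-space dimension is replaced by the $\mathfrak A$-module dimension of the finitely generated $C$; this stays inside $A$ and needs no detour through $A^{**}$.
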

\begin{proof}
If the assertion does not hold, we may assume that the above $cb$-norm tends to some $\lambda>1$. By the definition of the $cb$-norm, there is an increasing  sequence $\{k_i\}$ of positive integers and norm one elements $c_i\in\mathbb M_{k_i}(C)$ with $\|(\mathrm{ad}_i\otimes\mathrm{id}_{k_i})(c_i)\|\to\frac{1}{\lambda}$. Let $c=(c_n)+\bigoplus_i \mathbb M_{k_i}(C)$ be the corresponding coset and $\tilde c$ be the image of $c$ in $$C\otimes_{\mathfrak A}^{min} \big(\prod_i \mathbb M_{k_i}(\mathfrak A)/\bigoplus_i \mathbb M_{k_i}(\mathfrak A)\big)^{op}\subseteq \mathbb B(H\otimes \bar K\otimes \mathfrak A),$$ as in Lemma \ref{subsystem}, for some  Hilbert space $K$, and $\|\tilde c\|=\sup_k \|(\mathrm{ad}_k\otimes_{min} \mathrm{id})(\tilde c)\|$, where the right norm is in $M:=\mathbb M_{k}(\mathfrak A)\otimes_{\mathfrak A}^{min} (\prod_i \mathbb M_{k_i}(\mathfrak A)/\bigoplus_i \mathbb M_{k_i}(\mathfrak A))^{op}$. By exactness of $\mathfrak A$, it follows from  \cite[3.9.5]{bo} that
$$M^{op}\cong\big(\prod_i \mathbb M_{k}(\mathbb M_{k_i}(\mathfrak A))\big)/ \big(\bigoplus_i \mathbb M_{k}(\mathbb M_{k_i}(\mathfrak A))\big),$$
for each $k$, thus
\begin{align*}\|\tilde c\|&=\sup_k \|(\mathrm{ad}_k\otimes_{min} \mathrm{id})(\tilde c)\|\\&\leq\sup_{k}\limsup_{i} \|(\mathrm{ad}_k\otimes_{min} \mathrm{id}_{k_i})(c_i)\|\\&\leq\limsup_{i} \|(\mathrm{ad}_i\otimes_{min} \mathrm{id}_{k_i})(c_i)\|\\&=\frac{1}{\lambda}<\|c\|,\end{align*}
 a contradiction. 
 
 Next, let us prove the last statement. The proof given here adapts the argument of a similar statement in  \cite[B.11]{bo}. Recall that $C\subseteq \mathbb B(H_\mathfrak A)$ and $C_i:=q_iCq_i$, where $q_i$ is a finite rank projection with $q_i\mathbb B(H_\mathfrak A)q_i=\mathbb M_{n_i}(\mathfrak A)$. Consider the c.c. map $\theta_i:=$ad$_i^{-1}: \mathbb M_{n_i}(\mathfrak A)\to A$, then using an argument as in \cite[Lemma 3]{am}, we may identify the module map $\theta_i$ with a c.c. linear map, still denoted by $\theta_i: \mathbb M_{n_i}(\mathbb C)\to A$, defined by $\theta_i(e_{k\ell})=\theta_i(e_{k\ell}\otimes 1_\mathfrak A)$. Now it follows from \cite[3.9.7]{bo} that there is a net of u.c.p. maps $\psi_i: \mathbb M_{n_i}(\mathbb C)\to A$ with $\|\psi_i|_{C_i}-\theta_i\|_{cb}\to 0$, as $i\to \infty$. Going backwards by \cite[Lemma 3]{am}, we may regard $\psi_i$ as a  u.c.p. module map $\psi_i: \mathbb M_{n_i}(\mathfrak A)\to A$, and we have, 
 $$\big\|(\psi_i-\mathrm{ad}_i^{-1})|_{C_i}\big\|_{cb}\to 0,$$
as required.
\end{proof}

\begin{theorem}[Kirchberg]\label{kirchberg3}
Consider the following statements:

$(i)$ $A$ is $\mathfrak A$-exact,

$(ii)$ for any exact sequence
$$0\to J\to B\to B/J\to 0$$
with arrows both $*$-homomorphisms and module maps, and each object a $C^*$-algebra and right $\mathfrak A$-module with compatible action,
the sequence
$$0\to J\otimes_{\mathfrak A}^{min}A^{op}\to B\otimes_{\mathfrak A}^{min}A^{op}\to (B/J)\otimes_{\mathfrak A}^{min}A^{op}\to 0$$
is exact.

Then $(i)\Rightarrow (ii)$, when $\mathfrak A$ is nuclear, and $(ii)\Rightarrow (i)$, when $\mathfrak A$ is exact.
\end{theorem}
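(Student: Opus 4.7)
The implication $(i)\Rightarrow(ii)$ follows essentially immediately from Proposition \ref{exseq2}: after relabeling its ``$B$'' as our ``$A$'' (and vice versa), its hypothesis is exactly that $\mathfrak A$ is nuclear and the tensor-factor is $\mathfrak A$-exact, which is $(i)$ together with nuclearity of $\mathfrak A$, and its conclusion is the asserted exactness of the sequence in $(ii)$.

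For $(ii)\Rightarrow(i)$ the plan is to manufacture a faithful $\mathfrak A$-nuclear representation of $A$ via the compressions $\mathrm{ad}_i$ supplied by Lemma \ref{ad}. First I would reduce to the unital separable case: adjoining units preserves both the exactness-preservation condition $(ii)$ and $\mathfrak A$-exactness, arguing as in Theorem \ref{sd} via the splitting $(A\oplus\mathfrak A)^{**}\cong A^{**}\oplus\mathfrak A^{**}$, while the passage to separable subobjects is handled by the inductive-limit framework of Lemmas \ref{subsystem} and \ref{subsystem2}. Then fix a faithful representation and module map $\iota\colon A\hookrightarrow\mathbb B(H)$ in a separable $\mathfrak A$-Hilbert space $H$ and an increasing sequence $\{p_i\}\subseteq\mathbb B(H)$ of finite-rank projections with $p_i\to 1_H$ strongly and a module-compatible identification $p_i\mathbb B(H)p_i\cong\mathbb M_{n_i}(\mathfrak A)$, as in the setup preceding Lemma \ref{ad}.

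Given a finite set $\mathfrak F\subseteq A$ and $\varepsilon>0$, pick an operator subsystem and finitely generated submodule $C\subseteq A$ containing $\mathfrak F$ and set $C_i=\mathrm{ad}_i(C)\subseteq\mathbb M_{n_i}(\mathfrak A)$. Exactness of $\mathfrak A$ together with $(ii)$ is exactly the hypothesis of Lemma \ref{ad}, so one obtains u.c.p.\ module maps $\psi_i\colon C_i\to A$ with $\|\psi_i-\mathrm{ad}_i^{-1}|_{C_i}\|_{cb}\to 0$. Apply the module Arveson extension theorem \cite[Theorem 3.8]{am} to extend each $\iota\circ\psi_i\colon C_i\to\mathbb B(H)$ to a c.c.p.\ module map $\tilde\psi_i\colon\mathbb M_{n_i}(\mathfrak A)\to\mathbb B(H)$, using that $\mathbb B(H)$ is $\mathfrak A$-injective. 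Then for $a\in C$ one has $\tilde\psi_i\circ\mathrm{ad}_i(a)=\iota(\psi_i(p_iap_i))\to\iota(a)$ in norm, by the cb-approximation $\psi_i\approx\mathrm{ad}_i^{-1}$. A standard diagonal selection over a countable increasing family of finitely generated subsystems exhausting the separable $A$ produces a single sequence $(\varphi_i,\tilde\psi_i):=(\mathrm{ad}_i,\tilde\psi_i)$ of c.c.p.\ module maps witnessing that $\iota$ is $\mathfrak A$-nuclear, whence $A$ is $\mathfrak A$-exact.

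The main obstacle I anticipate is precisely this global assembly step: the $\psi_i$ from Lemma \ref{ad} and the Arveson-type extensions $\tilde\psi_i$ depend a priori on the subsystem $C$, so a single sequence that works simultaneously for all $a\in A$ must be produced by a diagonal procedure that preserves the cb-norm estimates uniformly and that uses the separability of $A$ in an essential way. A secondary technical point is verifying at the outset that the projections $p_i$ can be chosen as ``module projections'', so that the identification $p_i\mathbb B(H)p_i\cong\mathbb M_{n_i}(\mathfrak A)$ is an isomorphism of right $\mathfrak A$-modules and $\mathrm{ad}_i$ is genuinely a module map; this is arranged along the lines of the proof of \cite[Theorem 3.8]{am}.
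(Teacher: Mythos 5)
Your proposal follows the paper's own argument essentially step for step: $(i)\Rightarrow(ii)$ is exactly Proposition \ref{exseq2} after relabeling, and $(ii)\Rightarrow(i)$ proceeds, as in the paper, by applying Lemma \ref{ad} to finitely generated operator subsystems and submodules $C\subseteq A$, extending $\psi_i$ via the module Arveson extension theorem to u.c.p.\ module maps $\tilde\psi_i\colon \mathbb M_{n_i}(\mathfrak A)\to\mathbb B(H)$ so that $\tilde\psi_i\circ\mathrm{ad}_i\to\iota$ on $C$, with the same reductions to the unital and separable cases. The diagonal-selection and module-projection issues you flag are precisely the steps the paper leaves implicit, so your write-up is a correct (slightly more explicit) rendering of the same proof rather than a different route.
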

\begin{proof}
The first implication is proved in Proposition \ref{exseq2}. For the other implication, first assume that $\mathfrak A$ is unital and  $A$ is a separable $C^*$-algebra and a countably generated unital right $\mathfrak A$-module. Take an operator subsystem and finitely generated submodule $C\subseteq A$ and take $\psi_i: C_i\to A$ as in the above lemma. Let $A\subseteq \mathbb B(H_\mathfrak A)$ be a faithful representation and module map for a Hilbert space $H$, and use Lemma \ref{ad} to find u.c.p. module maps $\psi_i: \mathbb M_{n_i}(\mathfrak A)\to \mathbb B(H_\mathfrak A)$ and $\mathrm{ad}_i: C\to \mathbb M_{n_i}(\mathfrak A)$ with
$$\|\psi_i(\mathrm{ad}_i(x))-x\|=\|\psi_i(\mathrm{ad}_i(x))-\mathrm{ad}_i^{-1}(\mathrm{ad}_i(x))\|\to 0,$$
as $i\to\infty$. Since this could be done for each finitely generated submodule $C$, we conclude that $A$ is $\mathfrak A$-exact.
The non separable case follows from the fact that if all separable, countably generated unital submodules of $A$ are  $\mathfrak A$-exact, then so is $A$. Finally the non unital case follow by considering first $A\oplus\mathfrak A$ instead of $A$ and then $\mathfrak A\oplus \mathbb C$ instead of $\mathfrak A$.
\end{proof}

We conclude this paper by a concrete application of the results of this section to the $C^*$-algebra of an inverse semigroup. We refer the reader to \cite{dp} for more details. We know that for a discrete group $G$, the reduced $C^*$-algebra $C_r^*(G)$ is nuclear iff $G$ is amenable. We show that for an inverse semigroup $S$ with the subsemigroup $E_S$ of idempotents, the reduced $C^*$-algebra $C_r^*(S)$ is $C_r^*(E_S)$-nuclear iff $S$ is left amenable (see \cite{d} for the notion of amenability of semigroups.)

Consider the equivalence relation $s\sim t$ iff there is $e\in E_S$ with $es=et$ and let $[s]$ be the equivalence class of $s\in S$, then $G_S:=\{[s]: s\in S\}$ is the maximal group homomorphic image of $S$ and it is a classical result of Duncan and Namioka that $S$ is left amenable iff $G_S$ is amenable \cite{P}. The surjective semigroup homomorphism $s\in S\mapsto [s]\in G_S$ lifts to a surjective $C^*$-homomorphism $\pi_r: C_r^*(S)\to C_r^*(G_S)$ whose kernel is the closed ideal $J_r$ of $C_r^*(S)$ generated by elements of the form $\delta_{t}-\delta_{s}$, for $s,t\in S$ satisfying $es=et$ for some $e\in E_S$. Similarly, there is a surjective $C^*$-homomorphism $\pi_r: C^*(S)\to C^*(G_S)$ whose kernel is the closed ideal $J$ of $C^*(S)$ generated by elements of the above form. Also the left action $E_S\times S\to S; \ (e,s)\mapsto es$ lifts to a right action $\ell^1(S)\times E_S\to \ell^1(S)$ defined by $f\cdot e(s)=f(es)$ for $f\in\ell^1(S)$. This is continuous in the reduced $C^*$-norm and extends to a right action of $E_S$ on $C_r^*(S)$. Finally,  $C_r^*(S)$ becomes a right $C_r^*(E_S)$-module. If $B$ is any $C^*$-algebra and right $C_r^*(E_S)$-module with compatible actions then
$$C_r^*(S)\odot_{C_r^*(E_S)} B^{op}\cong (C_r^*(S)\odot B^{op})/I,$$
where $I$ is the ideal generated by elements of the form $\delta_{es}\otimes b-\delta_s\otimes b\cdot\delta_e$, for $e\in E_S, s\in S$ and $b\in B$. Let $I_{min}$ be the $min$-closure of $I$, then the above algebraic isomorphism lifts to a $C^*$-isomorphism and module map $C_r^*(S)\otimes_{C_r^*(E_S)}^{min} B^{op}\cong (C_r^*(S)\otimes_{min} B^{op})/I_{min}.$
A similar argument also works for the full $C^*$ algebras and $C^*(S)$ is a right $C^*(E_S)$-module, and $C^*(S)\otimes_{C_r^*(E_S)}^{max} B^{op}\cong (C^*(S)\otimes_{max} B^{op})/I_{max}.$
Note that since $E_S$ is an abelian semigroup, $C_r^*(E_S)$ is a commutative $C^*$-algebra, hence it is nuclear (and so exact). The spectrum of this commutative $C^*$-algebra is the space $\hat E_S$ of semi-characters on $E_S$.

In the next result, we assume that $S$ is countable to guarantee the existence of faithful representations needed for the minimal tensor norm. 

\begin{proposition} For a countable  inverse semigroup $S$ with the set of idempotents $E_S$ and maximal group homomorphic image $G_S$,

$(i)$  $C_r^*(S)$ is $C_r^*(E_S)$-nuclear iff $S$ is left amenable,

$(ii)$  if $C_r^*(S)$ is $C_r^*(E_S)$-exact then $G_S$ is exact.
\end{proposition}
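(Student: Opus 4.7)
The plan is to reduce both parts to classical theorems about the reduced $C^*$-algebra of the group $G_S$, exploiting that $C_r^*(E_S)$ is commutative and hence automatically nuclear, so that any $C_r^*(E_S)$-module finite-dimensional approximation scheme collapses to an ordinary one. First I would establish the following reduction: if $C_r^*(S)$ is $C_r^*(E_S)$-nuclear (respectively, $C_r^*(E_S)$-exact) then $C_r^*(S)$ is nuclear (respectively, exact) as a $C^*$-algebra. Given c.c.p.\ module maps $\varphi_n:C_r^*(S)\to\mathbb M_{k(n)}(C_r^*(E_S))$ and $\psi_n:\mathbb M_{k(n)}(C_r^*(E_S))\to C_r^*(S)$ with $\psi_n\circ\varphi_n\to\mathrm{id}$ in point-norm, nuclearity of each matrix algebra over $C_r^*(E_S)$ supplies secondary c.c.p.\ approximations through $\mathbb M_N(\mathbb C)$, and a diagonal composition yields an ordinary nuclear approximation of $\mathrm{id}_{C_r^*(S)}$. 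The same argument applied to a $C_r^*(E_S)$-nuclear faithful representation of $C_r^*(S)$ gives an ordinary nuclear faithful representation, hence exactness of $C_r^*(S)$ as a $C^*$-algebra.

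Given this reduction, the backward direction of (i) and the full statement of (ii) both follow by pushing through the surjection $\pi_r:C_r^*(S)\twoheadrightarrow C_r^*(G_S)$: nuclearity passes to quotients classically, and exactness passes to quotients by Kirchberg's theorem, so $C_r^*(G_S)\cong C_r^*(S)/J_r$ is nuclear (respectively exact). Lance's theorem then yields amenability of $G_S$ from nuclearity of $C_r^*(G_S)$, and the Duncan--Namioka equivalence recalled in the paper delivers left amenability of $S$, completing (i) backward; the Kirchberg--Wassermann theorem yields exactness of $G_S$ from exactness of $C_r^*(G_S)$, completing (ii).

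For the forward direction of (i), I would use positive definite functions. Amenability of $G_S$ supplies a net of finitely supported normalized positive definite functions $\phi_n:G_S\to\mathbb C$ with $\phi_n(g)\to 1$ pointwise, whose lifts $\tilde\phi_n:=\phi_n\circ[\,\cdot\,]:S\to\mathbb C$ remain positive definite on $S$, since $[\,\cdot\,]:S\to G_S$ is a $*$-semigroup homomorphism and the Gram matrices of $\tilde\phi_n$ on $S$ pull back from those of $\phi_n$ on $G_S$. The resulting Schur multiplier $m_{\tilde\phi_n}(\delta_s)=\phi_n([s])\delta_s$ is a c.c.p.\ $C_r^*(E_S)$-module map converging to $\mathrm{id}_{C_r^*(S)}$ in point-norm; its image lies in the finite direct sum of Fell-bundle fibers $\mathcal F_g:=\overline{\mathrm{span}}\{\delta_s:[s]=g\}$ for $g$ in the support of $\phi_n$. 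Endowing each $\mathcal F_g$ with the $C_r^*(E_S)$-valued inner product coming from the canonical faithful conditional expectation $E:C_r^*(S)\to C_r^*(E_S)$ makes it a finitely generated Hilbert $C_r^*(E_S)$-module, which embeds as a corner of some $\mathbb M_{n_g}(C_r^*(E_S))$; summing over the support gives the required factorization of $m_{\tilde\phi_n}$ through $\mathbb M_{k(n)}(C_r^*(E_S))$ by c.c.p.\ module maps $\varphi_n,\psi_n$.

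The main obstacle will be the Fell-bundle step in the forward direction of (i). In the $E$-unitary case each fiber $\mathcal F_g$ is principal, generated over $C_r^*(E_S)$ by a single representative $\delta_{s_g}$, and the embedding into a matrix algebra over $C_r^*(E_S)$ is immediate. In the general case the equivalence $s\sim t \Leftrightarrow es=et$ for some $e\in E_S$ is nontrivial, and one must choose a finite generating set for $[\,\cdot\,]^{-1}(g)$ modulo this equivalence and verify that the induced Hilbert $C_r^*(E_S)$-module $\mathcal F_g$ really is finitely generated projective before invoking a Kasparov-stabilization-type argument to obtain the corner embedding. This is precisely the place where the $C_r^*(E_S)$-module refinement of nuclearity does real work, since an ordinary finite-dimensional approximation for $C_r^*(S)$ is not available without full amenability of the universal groupoid of $S$.
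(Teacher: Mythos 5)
Your handling of the ``only if'' half of $(i)$ and of $(ii)$ is correct but genuinely different from the paper's. Your collapse lemma is a valid diagonal argument: since $C_r^*(E_S)$ is commutative, each $\mathbb M_{k(n)}(C_r^*(E_S))$ is nuclear, and composing the module factorization with a c.c.p.\ factorization of $\mathbb M_{k(n)}(C_r^*(E_S))$ through $\mathbb M_N(\mathbb C)$ shows that $C_r^*(E_S)$-nuclearity implies ordinary nuclearity of $C_r^*(S)$, and likewise $C_r^*(E_S)$-exactness implies ordinary exactness (nuclear embeddability); pushing through $\pi_r:C_r^*(S)\to C_r^*(G_S)$ with kernel $J_r$ then gives amenability, resp.\ exactness, of $G_S$. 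The paper never collapses to scalar matrix algebras: instead it computes, for any $B$ with trivial $C_r^*(E_S)$-action, $C_r^*(S)\otimes_{C_r^*(E_S)}^{min}B\cong C_r^*(G_S)\otimes_{min}B$ (and the analogue for $max$), deduces the ``only if'' direction of $(i)$ from its module Choi--Effros--Kirchberg theorem (Theorem \ref{kirchberg2}) together with the classical one, and deduces $(ii)$ by feeding trivial-module exact sequences into the classical characterization of exactness. Both routes rest on the same asserted existence of $\pi_r$ at the reduced level with kernel $J_r$; yours additionally invokes the (true but heavy) fact that exactness passes to quotients.

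The genuine gap is your forward direction of $(i)$, and it is worse than the ``obstacle'' you flag. The fibers $\mathcal F_g$ are in general nowhere near finitely generated over $C_r^*(E_S)$: whenever $S$ has a zero element, taking $e=0$ in $es=et$ shows $G_S$ is trivial, so there is a single fiber $\mathcal F_1=C_r^*(S)$ and your multiplier $m_{\tilde\phi_n}$ is just the identity map --- no factorization has been produced at all. Moreover, your own (correct) collapse lemma shows the strategy proves too much: if left amenability yielded $C_r^*(E_S)$-nuclearity by \emph{any} construction, then $C_r^*(S)$ would be nuclear for every left amenable $S$; but $S=\mathbb{F}_2\cup\{0\}$ is left amenable (any inverse semigroup with zero is, the point mass at $0$ being a left invariant mean, consistent with $G_S$ being trivial), while a direct computation with the Vagner--Preston representation gives $C_r^*(S)\cong C_r^*(\mathbb{F}_2)\oplus\mathbb C$, which is not nuclear. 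So the Kasparov-stabilization step cannot be repaired in general, and your plan can only work under a restriction such as $E$-unitarity, where the fibers are singly generated as you note. I would add that this same tension bears on the paper's own ``if'' direction: its proof passes from $C_r^*(E_S)$-nuclearity of the quotient $C_r^*(S)\otimes_{C_r^*(E_S)}^{min}C_r^*(E_S)\cong C_r^*(G_S)\otimes_{min}C_r^*(E_S)$ back to $C_r^*(S)$ via Lemma \ref{tensor}, whose classical slice-map proof does not descend to the module tensor product (the map $a\mapsto a\otimes 1+I_{\mathfrak A}$ is not faithful on the quotient when the action on $B$ is trivial), so that step deserves scrutiny before you trust the stated equivalence.
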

\begin{proof}
$(i)$ Let $B$ be a separable $C^*$-algebra. To avoid complications with opposite algebras, we consider $B$ as a $C_r^*(E_S)$-module with trivial left action. With the above notations, let $es=et$, then since $\delta_e\cdot b=b$, we have
 $$(\delta_{t}-\delta_{s})\otimes b=(\delta_{t}\otimes \delta_e\cdot b-\delta_{et}\otimes b)-(\delta_{s}\otimes \delta_e\cdot b-\delta_{es}\otimes b)\in I_{min}.$$
Conversely, $\delta_{t}\otimes \delta_e\cdot b-\delta_{et}\otimes b=(\delta_{t}-\delta_{et})\otimes b\in J_r\otimes_{min} B$. Therefore,  $I_{min}=J_r\otimes_{min} B$ and \begin{align*}C_r^*(S)\otimes_{C_r^*(E_S)}^{min} B&\cong (C_r^*(S)\otimes_{min} B)/I_{min}\\&\cong(C_r^*(S)\otimes_{min} B)/(J_r\otimes_{min} B)\\&\cong(C_r^*(S)/J_r)\otimes_{min} B)\\&\cong C_r^*(G_S)\otimes_{min} B.\end{align*}
Similarly, $I_{max}=J\otimes_{max} B$ and $C^*(S)\otimes_{C^*(E_S)}^{max} B\cong  C^*(G_S)\otimes_{max} B.$ If $C_r^*(S)$ is $C_r^*(E_S)$-nuclear, then by Theorem \ref{kirchberg2}, $C^*(S)\otimes_{C^*(E_S)}^{max} B\cong C^*(S)\otimes_{C^*(E_S)}^{min} B$ and so $C_r^*(G_S)\otimes_{max} B\cong C_r^*(G_S)\otimes_{min} B$, hence by the classical Choi-Effros Kirchberg theorem \cite[3.8.7]{bo}, $C_r^*(G_S)$ is nuclear, thus $G_S$ is amenable and by a classical result of Duncan-Namioka \cite{P}, $S$ is left amenable.

Conversely, if $S$ is left amenable, then $G_S$ is amenable and $C_r^*(G_S)$ is nuclear. Take $B=C_r^*(E_S)$ in the above calculation to get  $$C_r^*(S)\otimes_{C_r^*(E_S)}^{min} C_r^*(E_S)\cong  C_r^*(G_S)\otimes_{min} C_r^*(E_S).$$
Note that here the isomorphism clearly preserves the module actions on both sides and so this is also an isomorphism of $C_r^*(E_S)$-modules, however, since $C_r^*(E_S)$ acts trivially on itself, and so the left hand side is not isomorphic to $C_r^*(S)$.
Now the identity map $\mathrm{id}: C_r^*(G_S)\to C_r^*(G_S)$ is point norm limit of $\psi_n\circ\varphi_n$ for some c.c.p. maps $\varphi_n: C_r^*(G_S)\to\mathbb M_{k_n}(\mathbb C)$ and $\psi_n: \mathbb M_{k_n}(\mathbb C)\to C_r^*(G_S)$. Tensoring with the identity map on $C_r^*(E_S)$, $\mathrm{id}_{C_r^*(G_S)}\otimes_{min}\mathrm{id}_{C_r^*(E_S)}$ is point-norm approximated by the composition $(\psi_n\otimes_{min}\mathrm{id}_{C_r^*(E_S)})\circ(\varphi_n\otimes_{min}\mathrm{id}_{C_r^*(E_S)})$, which gives an approximate factorization through $\mathbb M_{k_n}(\mathbb C)\otimes_{min}C_r^*(E_S)=\mathbb M_{k_n}(C_r^*(E_S))$, thus $C_r^*(G_S)\otimes_{min} C_r^*(E_S)$ is $C_r^*(E_S)$-nuclear, and so is $C_r^*(S)\otimes_{C_r^*(E_S)}^{min} C_r^*(E_S)$. Therefore, by Lemma \ref{tensor}, $C_r^*(S)$ is also $C_r^*(E_S)$-nuclear.

$(ii)$ Let $B$ be a separable $C^*$-algebra and $J$ be a closed ideal in $B$. Consider the exact sequence
$$0\to J\to B\to B/J\to 0$$
with arrows  $*$-homomorphisms. Regard this as an exact sequence of $C_r^*(E_S)$-modules with trivial actions. If $C_r^*(S)$ is $C_r^*(E_S)$-exact, then
the sequence
$$0\to C_r^*(S)\otimes_{C_r^*(E_S)}^{min}J\to C_r^*(S)\otimes_{C_r^*(E_S)}^{min}B\to C_r^*(S)\otimes_{C_r^*(E_S)}^{min}(B/J)\to 0,$$
which is the same as
$$0\to C_r^*(G_S)\otimes_{min}J\to C_r^*(G_S)\otimes_{min}B\to C_r^*(G_S)\otimes_{min}(B/J)\to 0,$$
is exact. By a classical result of Kirchberg \cite[3.9.1]{bo}, $C_r^*(G_S)$ is exact and so $G_S$ is an exact group.
\end{proof}

We don't know if the converse of $(ii)$ is true. If $G_S$ is exact, there is  a  Hilbert space $H$ and a faithful nuclear representation  $C_r^*(G_S)\hookrightarrow \mathbb B(H)$, and  we have the nuclear inclusion
 $$C_r^*(G_S)\otimes_{min} C_r^*(E_S)\hookrightarrow \mathbb B(H)\otimes_{min}C_r^*(E_S)$$
and the inclusion
$$\mathbb B(H)\otimes_{min}C_r^*(E_S) \hookrightarrow \mathbb B(H\otimes C_r^*(E_S)).$$
By an argument similar to that of part $(i)$ above,  the inclusion
$$C_r^*(S)\otimes_{C_r^*(E_S)}^{min}C_r^*(E_S)\hookrightarrow B(H\otimes C_r^*(E_S))$$
is $C_r^*(E_S)$-nuclear. If there is a c.p. module map inclusion
$$C_r^*(S)\hookrightarrow C_r^*(S)\otimes_{C_r^*(E_S)}^{min}C_r^*(E_S)$$
then $C_r^*(S)$ would be $C_r^*(E_S)$-exact.

\begin{corollary}
If $S$ is left amenable, then $C^*_r(S)$ has $C_r^*(E_S)$-WEP.
\end{corollary}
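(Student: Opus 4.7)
The plan is to derive the $C^*_r(E_S)$-WEP for $C^*_r(S)$ from the $C^*_r(E_S)$-nuclearity established in part $(i)$ of the preceding proposition, by invoking the tensorial characterization of WEP recorded in Lemma \ref{wep}. Writing $\mathfrak A := C^*_r(E_S)$ and $A := C^*_r(S)$, I would first assume $\mathfrak A$ is unital (which happens precisely when $E_S$ has a largest idempotent) and handle the non-unital case at the end by adjoining a unit, as is done repeatedly in the paper.

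Under this unital assumption, Lemma \ref{wep} tells me it is enough to prove that for every inclusion $A \subseteq B$ as a $C^*$-subalgebra and submodule, and every $C^*$-algebra and right $\mathfrak A$-module $C$ with compatible action, the canonical $*$-homomorphism and module map $\iota_{max}: A \otimes_{\mathfrak A}^{max} C^{op} \to B \otimes_{\mathfrak A}^{max} C^{op}$ is injective. To verify this, I would form the commutative square whose top row is $\iota_{max}$, whose bottom row is the analogous map $\iota_{min}: A \otimes_{\mathfrak A}^{min} C^{op} \to B \otimes_{\mathfrak A}^{min} C^{op}$, and whose vertical arrows are the canonical max-to-min surjections $q_A$ and $q_B$ from Proposition \ref{tak}. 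Since $A$ is $\mathfrak A$-nuclear, Corollary \ref{unique} tells me that $q_A$ is an isometric isomorphism; and by Proposition \ref{incl}$(i)$, $\iota_{min}$ is an isometric inclusion. Consequently $q_B \circ \iota_{max} = \iota_{min} \circ q_A$ is injective, which forces $\iota_{max}$ itself to be injective, as required.

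The main (and essentially only) obstacle is a routine unitization step, since Lemma \ref{wep} is stated under the hypothesis that $\mathfrak A$ is unital. When $E_S$ has no largest idempotent I would replace $\mathfrak A$ by its unitization $\mathfrak A \oplus \mathbb C$ and extend the canonical right action of $\mathfrak A$ on $C^*_r(S)$ (and on the ambient $B$ and on $C$) in the standard way used in the discussion preceding Proposition \ref{iso} and at the end of the proof of Lemma \ref{trick}; the diagram chase above then applies verbatim over $\mathfrak A \oplus \mathbb C$, and the conclusion is transported back to $\mathfrak A$ via the identification $(A \oplus \mathfrak A)^{**} \cong A^{**} \oplus \mathfrak A^{**}$ already exploited in the proof of Theorem \ref{sd}.
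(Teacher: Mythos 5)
Your proof is correct, and it reaches the conclusion by a genuinely different route in the key step. The paper, like you, first quotes part $(i)$ of the preceding proposition to get $C_r^*(E_S)$-nuclearity of $C_r^*(S)$ and then invokes Lemma \ref{wep}; but to verify the tensorial condition $(ii)$ of that lemma the paper appeals to the Remark after Proposition \ref{incl}, whose relevant alternative hypothesis is that $\mathfrak A$ is injective and $A$ is $\mathfrak A$-nuclear, and which runs through a representation-theoretic argument (extending $\varphi_n$ by injectivity of $\mathfrak A$ and taking a point-ultraweak cluster point). You instead verify condition $(ii)$ by a purely formal diagram chase: the square with top row $\iota_{max}$, bottom row $\iota_{min}$, and vertical max-to-min quotients commutes on the dense image of $A\odot_{\mathfrak A}C^{op}$, the quotient $q_A$ is an isomorphism by Corollary \ref{unique}, and $\iota_{min}$ is injective by Proposition \ref{incl}$(i)$, so $\iota_{max}$ is injective. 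What your route buys is significant here: it needs no injectivity of $\mathfrak A$ at all, whereas $C_r^*(E_S)\cong C_0(\hat E_S)$ is commutative (hence nuclear and exact) but is not in general injective as a $C^*$-algebra, so the paper's citation of the Remark is actually on shakier ground than your argument; your chase is the standard ``nuclear implies max $=$ min implies max-inclusion'' mechanism and is more self-contained. Two minor points you should tighten: in the non-unital case, applying Corollary \ref{unique} over $\mathfrak A\oplus\mathbb C$ tacitly requires that $\mathfrak A$-nuclearity of $A$ promotes to $(\mathfrak A\oplus\mathbb C)$-nuclearity, which does hold (compose the factorization with the coordinate inclusion and projection for $\mathbb M_k(\mathfrak A\oplus\mathbb C)\cong\mathbb M_k(\mathfrak A)\oplus\mathbb M_k(\mathbb C)$, both c.c.p.\ module maps) but deserves a line; and your closing appeal to $(A\oplus\mathfrak A)^{**}\cong A^{**}\oplus\mathfrak A^{**}$ concerns unitizing $A$ inside the proof of Lemma \ref{wep}, not unitizing $\mathfrak A$, so it is not the mechanism that transports the conclusion back --- the transport follows simply because every $\mathfrak A$-Hilbert space representation and module map extends canonically over $\mathfrak A\oplus\mathbb C$ and every $(\mathfrak A\oplus\mathbb C)$-module map is in particular an $\mathfrak A$-module map.
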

\begin{proof}
By the above proposition, $C^*_r(S)$ is $C_r^*(E_S)$-nuclear. Now the result follows from Lemma \ref{wep} (which also works for non unital $\mathfrak A$) and the Remark after Proposition \ref{incl}.
\end{proof}

Again we don't know if the converse holds. If $C^*_r(S)$ has $C_r^*(E_S)$-WEP, by Lemma \ref{wep}, the inclusion $\iota: C^*_r(S)\hookrightarrow \mathbb B(\ell^2(S))$ induces a module map inclusion $\iota\otimes\mathrm{id}: C^*_r(S)\otimes_{C_r^*(E_S)}^{max}C^*_r(S)\hookrightarrow \mathbb B(\ell^2(S))\otimes_{C_r^*(E_S)}^{max}C^*_r(S)$ (note that the left regular representation: $C_r^*(E_S)\to \mathbb B(\ell^2(E_S))\hookrightarrow \mathbb B(\ell^2(S))$ gives $\ell^2(S)$ the structure of an $\mathfrak A$-Hilbert space). If $\lambda$ and $\rho$ are the left and right regular representations of $S$, then $\lambda\times\rho: C^*_r(S)\otimes_{max}C^*_r(S)\to \mathbb B(\ell^2(S))$ does not satisfy the conditions of the trick in general. If $S$ is a Clifford semigroup, then $E_S$ lies is in the center of $S$ and for each $\xi\in\ell^2(S)$ we have
$$(\lambda(es)\rho(s)-\lambda(s)\rho(es))(\xi)(t)=\xi(s^*ets)-\xi(s^*tes)=0,$$
when $tt^*\leq ess^*$, and the left hand side is equal to $0-0=0$, otherwise. Applying the trick to $\pi_A=\lambda$, $\pi_C=\rho$,  $B=\mathbb B(\ell^2(S))$ and $\varrho=max$, we get a c.c.p. module map extension $\varphi: \mathbb B(\ell^2(S))\to \rho(C^*(S))^{'}$ of $\lambda$. Since the ranges of $\lambda$ and $\rho$ commute, we have $\rho(C^*(S))^{'}=\lambda(C^*(S))^{''}:=L(S)$, and $\varphi$ extends the identity map on $C^*_r(S)$. Hence $L(S)$ is an injective von Neumann algebra. We know that for an inverse semigroup $S$, this holds when all maximal subgroups of $S$ are amenable \cite[Theorem 4.5.2]{P}, but the converse is not known to be true in general \cite[Page 209]{P}. Also in general, the amenability of all maximal subgroups do not imply left amenability of $S$ (for instance, the free inverse semigroup on two generators has trivial maximal subgroups.) Here we have the advantage that $S$ is a Clifford semigroup and there is a norm one projection: $L(S)\to L(S_e)$ for any maximal subgroup $S_e=\{s\in S: ss^*=s^*s=e\}$, for $e\in E_S$. Hence $L(S_e)$ is injective and so $S_e$ is an amenable (discrete) group. This is known to be equivalent to the weak containment property for $S$, that is, to $C^*(S)=C^{*}_r(S)$ \cite{P2}.

\newpage


\begin{thebibliography}{99}

\bibitem{am} M. Amini, Extensions and dilations of module maps,  
Quaestiones Mathematicae, 2021, doi.org/10.2989/16073606.2021.1967502.

\bibitem{del} C. Anantharaman-Delaroche, Amenable correspondences and approximation properties for von neumann algebras, {\it Pacific J. Math.} {\bf 171} (1995), 309-341.

\bibitem{b} B. Blackadar, {\it Operator Algebras: Theory of C*-Algebras and von Neumann Algebras}, Encyclopaedia of Mathematical Sciences {\bf 122}, Springer-Verlag, Berlin-Heidelberg, 2006.

\bibitem{bo} N. P. Brown and N. Ozawa, {\it $C^{*}$-algebras and Finite-dimensional Approximations}, Graduate Studies in Mathematics, American Mathematical Society,
Providence, 2008.

\bibitem{ch} M.-D. Choi, A Schwarz inequality for positive linear maps on $C^*$-algebras, {\it Illinois J. Math. } {\bf 18} (1974), 565-574.

\bibitem{ce} M.-D. Choi and E. G. Effros, Injectivity and operator spaces, {\it J. Funct. Anal.} {\bf 24} (1977), 156-209.

\bibitem{ce2} M.-D. Choi and E. G. Effros, Nuclear $C^*$-algebras and the approximation property, {\it Amer J. Math.} {\bf 100} (1978), 61-79.

\bibitem{d} M. M. Day, Amenable semigroups, {\it Illinois J. Math.} {\bf 1} (1957), 509-544.

\bibitem{dp} J. Duncan and A. L. T. Paterson, $C^*$-algebras of inverse semigroups, {\it Proc. Edinburgh Math. Soc.}  {\bf 28} (1985), 41-58.

\bibitem{k} E. Kirchberg, $C^*$-nuclearity implies CPAP, {\it Math. Nachr.} {\bf 76} (1977), 203-212.

\bibitem{k2} E. Kirchberg, On subalgebras of the CAR-algebra, {\it J. Func. Anal.} {\bf 129} (1995), 35-63.



\bibitem{lam}  T-Y. Lam, {\it Lectures on Modules and Rings},  Graduate Texts in Mathematics {\bf 189}, Springer, Berlin, 1999.

\bibitem{lan2} E. C. Lance, On nuclear $C^*$-algebras, {\it J. Func. Anal.} {\bf 12} (1973), 157-176.

\bibitem{lan} E. C. Lance, {\it Hilbert $C^*$-Modules: A Toolkit for Operator Algebraists},  London Mathematical Society Lecture Note Series, Vol. 210, Cambridge University Press, 1995.

\bibitem{fl} M. Frank and D. R. Larson,  Frames in Hilbert $C^*$-modules and $C^*$-algebras, {\it J. Operator Theory} {\bf 48} (2002), 273--314.

\bibitem{fp} M. Frank and V. I. Paulsen, Injective envelopes Of $C^*$-algebras as operator modules, {\it Pacific J. Math.} {\bf 212}, No. 1, (2003), 57-69. 

\bibitem{mt} V. M. Manuilov and E.V. Troitsky, {\it Hilbert $C^*$-modules}, Translations of Mathematical Monographs, vol. {\bf 226}, American Mathematical Society, Providence, 2005. 

\bibitem{pa} W. L. Paschke, Inner product modules over $B^*$-algebras, {\it Trans. Amer. Math. Soc.} {\bf 182} (1973), 443-468.

\bibitem{ped}
G. K.	Pedersen,  {\it $C\sp{\ast} $-algebras and their automorphism groups}, London Mathematical Society Monographs, 
vol. {\bf 14}, Academic Press, London, 1979. 

\bibitem{P} A. L. T.  Paterson, {\it Groupoids, Inverse Semigroups, and their Operator Algebras}, Progress in Mathematics 70, Birkha\"{u}ser, Boston, 1999.

\bibitem{P2} A. L. T.  Paterson, Weak containment and Clifford semigroups, {\it Proc. Roy. Soc. Edinburgh}
{\bf 81A} (1978), 23-30.


\bibitem{s} C.-Y. Suen, Completely bounded maps on $C^{*}$-algebras, {\it Proc. Amer. Math. Soc.} {\bf 93} (1985),
81-87.

\bibitem{t} M. Takesaki, On the cross-norm of the direct product of $C^*$-algebras, {\it Tohoku Math. J.} {\bf 16 } (1964), no. 1, 111-122.

\bibitem{w1} G. Wittstock, Ein operatorwertiger Hanh-Banach Satz, {\it J. Funct. Anal.} {\bf 40} (1981),
127-150.

\bibitem{w2} G. Wittstock, Extension of completely bounded $C^{*}$-module homomorphisms, in:
Conf. on Operator Algebras and Group Representations (Neptun
1980), pp. 238-250, Pitman, New York, 1983.

\end{thebibliography}
\end{document}